\newcommand{\eps}{\varepsilon}
\newtheorem{theorem}{Theorem}[section]
\newtheorem{proposition}[theorem]{Proposition}
\newtheorem{proposition and definition}[theorem]{Proposition and definition}
\newtheorem{definition}[theorem]{Definition}
\newtheorem{lemma}[theorem]{Lemma}
\theoremstyle{remark}
\newtheorem{remark}[theorem]{Remark}
\numberwithin{equation}{section}
\newcommand{\calM}{\mathcal{M}}
\newcommand{\R}{\mathbb{R}}
\newcommand{\N}{\mathbb{N}}
\newcommand{\rn}{\mathbb{R}^n}
\newcommand\abs[1]{\left|#1\right|}
\newcommand{\tend}[2]{\displaystyle\mathop{\longrightarrow}_{#1\rightarrow#2}}
\newcommand\1{\mathds{1}}
\newcommand\dom{d_{\partial \Omega}}
\DeclareMathOperator{\Exp}{Exp}
\title[Cut locus and variational inequalities]{Cut locus on compact manifolds and uniform semiconcavity estimates for a variational inequality}
\author{François Générau, \'Edouard Oudet, Bozhidar Velichkov}
\begin{document}
\maketitle
\tableofcontents

\begin{abstract}
 We study a family of gradient obstacle problems on a compact Riemannian manifold. We prove that the solutions of these free boundary problems are uniformly semiconcave and, as a consequence, we obtain some fine convergence results for the solutions and their free boundaries. Precisely, we show that the elastic and the $\lambda$-elastic sets of the solutions Hausdorff converge to the cut locus and the $\lambda$-cut locus of the manifold.

 %
 %

\end{abstract}

\section{Introduction}
Let $M$ be a smooth $n$-dimensional compact Riemannian manifold without boundary. Let $b \in M$ be a fixed point. We denote by $d_b:M\to\R$ the distance function to $b$,
and by $\text{\rm Cut}_b(M)$ the {\it cut locus}, that  is the set of points ({\it cut points})  $p\in M$ for which there exists a geodesic $\gamma$, starting from $b$ and passing through $p$, which is length minimizing between $b$ and $p$, but not after $p$.
The cut locus inherits much of the topology of $M$. It is a deformation retract of $M\setminus \{b\}$ and has the same homotopy type (see for instance \cite[Chapter III, Section 4]{sakai1996Riemannian}). Moreover, it is also related to the global geometry of $M$, for instance, to the geodesic spectrum (every close geodesics starting from $b$ crosses $\text{Cut}_b(M)$) and the Ambrose's problem (see \cite{hebdaMetricStructureCut1994}).

The local structure of the cut locus can be very rich and at the same time complicated, as it seems to be closely related to the regularity of $g$. A stratification theorem is available only when the metric $g$ is analytic (see \cite{myers1936connections} and \cite{buchner1977simplicial}), while in general, it is known that $\text{Cut}_b(M)$ must have an integer Hausdorff dimension (when $g$ is $C^\infty$) that  might even become fractional when $g$ is $C^k$ (see \cite{itohDimensionCutLocus1998} and the references therein).
The sensitivity with respect to the regularity of the manifold $(M,g)$ makes the cut locus difficult to recover by numerical methods involving discrete structures. A more stable object from this point of view is the so-called  {\it$\lambda$-cut locus} $\text{\rm Cut}_b^\lambda(M)$, which we introduce in this paper in analogy with the $\lambda$-medial axis of Chazal and Lieutier, which is a widely studied object in Computational Geometry (see \cref{sub:omega}). We refer to \cite{paper-num} for a detailed account on the impact of our study to the numerical methods for the computation of the cut locus.

For any $\lambda>0$, the $\lambda$-cut locus is defined as
\begin{equation}\label{eq:lambda cut locus}
\text{\rm Cut}_b^\lambda(M) := \left\{p\in M\setminus\{b\} : \abs{\nabla d_b(p)}^2 \leq 1 - \frac{\lambda^2}{d_b^2(p)}\right\},
\end{equation}
the norm of the generalized gradient $|\nabla d_b|$ being defined at every point $p\in M \setminus \{b\}$ as
\begin{equation}
\abs{\nabla d_b}(p) := \max\big\{0, \sup\limits_{v\in T_xM, \abs{v}=1} \partial^+_v d_b(p)\big\},
\end{equation}
where $\partial^+_v d_b(p)$ is the derivative of $d_b$ in the direction $v$ (see \cref{s:notation}). The $\lambda$-cut locus approximates the cut locus in the following sense: for every
$\lambda>0$, we have $\text{Cut}_b^\lambda(M)\subset \text{Cut}_b(M)$, while the closure of the union of $\text{Cut}_b^\lambda(M)$ over $\lambda>0$ is precisely $\text{Cut}_b(M)$ (see \cref{prop:approximation}). In particular, just as the cut locus, the $\lambda$-cut locus is a non-smooth set, with potentially very wild structure, even when $M$ is smooth.

\smallskip

In this paper we study the asymptotic behavior of a family of gradient obstacle problems on the manifold $M$ and we prove that both $\text{\rm Cut}_b(M)$ and $\text{\rm Cut}_b^\lambda(M)$ can be recovered from the solutions of these problems.
Moreover, even if our study is purely theoretical, it leads to a new method for the numerical approximation of the cut locus and the $\lambda$-cut locus on a compact manifold (see \cref{rem:num}).
\smallskip

For any $m>0$, we consider the variational minimization problem
\begin{equation}\label{eq:gradient constraint manifold}
\min\left\{\int_{M}  \abs{\nabla u}^2 - mu\ :\ u\in H^1(M),\ \abs{\nabla u} \leq 1,\ u(b)=0\right\}.
\end{equation}
This problem has a unique minimizer, which we will denote by $u_m$. We consider the sets
\begin{align}
E_m &:= \{p\in M\setminus \{b\} : \abs{\nabla u_m(p)} < 1\}, \nonumber
\\
\text{and} \quad E_{m,\lambda} &:= \left\{p\in M\setminus \{b\} : \abs{\nabla u_m(p)}^2 \leq 1 - \frac{\lambda^2}{u_m^2(p)}\right\}. \label{eq:lambda elastic set}
\end{align}
Our main result is the following.

\begin{theorem}[Approximation of $\text{\rm Cut}_b(M)$ and $\text{\rm Cut}_b^\lambda(M)$]\label{thm:main}
Let $M$ be a compact Riemannian manifold of dimension $n$ and let $b\in M$ and $\lambda>0$ be fixed. Then,
\begin{equation}\label{eq:convergence non-contact set}
	E_m\tend{m}{+\infty}\text{\rm Cut}_b(M) \quad \text{in the Hausdorff sense.}
	\end{equation}
Moreover, for any fixed $\eps>0$, we have that
	\begin{equation}
	\sup\limits_{p \in E_{m,\lambda}} d\big(p,\text{\rm Cut}_b^\lambda(M)\big) \tend{m}{+\infty}0, \label{eq:convergence lambda cut locus}
	\quad \text{and} \quad  \sup\limits_{p \in \text{\rm Cut}_b^{\lambda+\varepsilon}(M)} d(p,E_{m,\lambda}) \tend{m}{+\infty}0.
	\end{equation}
\end{theorem}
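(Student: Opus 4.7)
The plan is to chain four ingredients. Steps~1 and 2 are analytic: uniform convergence $u_m\to d_b$, and upper semicontinuity of the generalized gradient coming from the uniform semiconcavity of $u_m$ (the main technical result of the paper). Steps~3 and 4 are geometric: they deduce \eqref{eq:convergence lambda cut locus} from Steps~1--2 and then combine it with the density property $\text{\rm Cut}_b(M)=\overline{\bigcup_{\lambda>0}\text{\rm Cut}_b^\lambda(M)}$ to reach \eqref{eq:convergence non-contact set}, modulo one PDE argument that I expect to be the main obstacle.

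For Step~1, I would use $d_b$ itself as a competitor in \eqref{eq:gradient constraint manifold}, obtaining $\int_M |\nabla u_m|^2 - m u_m \le \int_M |\nabla d_b|^2 - m d_b$. Combined with the pointwise bound $u_m \le d_b$ (from $|\nabla u_m|\le 1$ and $u_m(b)=0$), this gives $\int_M(d_b - u_m)\le |M|/m \to 0$, and the uniform $1$-Lipschitz regularity of the $u_m$ upgrades the $L^1$ decay to uniform convergence via Arzelà--Ascoli. For Step~2, I would combine the uniform semiconcavity with the classical stability of superdifferentials: for any $p_m\to p$, every cluster point of a sequence $v_m\in\partial^+ u_m(p_m)$ lies in $\partial^+ d_b(p)$. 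Because a semiconcave function satisfies $|\nabla u|(p)=\sup\{|v|:v\in\partial^+ u(p)\}$, this specializes to the key estimate
\[
\limsup_{m\to+\infty}\ |\nabla u_m|(p_m)\ \le\ |\nabla d_b|(p).
\]

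Both halves of \eqref{eq:convergence lambda cut locus} are then routine. Given $p_m\in E_{m,\lambda}$ with $p_m\to p$, passing to the limit in $|\nabla u_m|(p_m)^2\le 1-\lambda^2/u_m(p_m)^2$ via Steps~1--2 yields $|\nabla d_b|(p)^2\le 1-\lambda^2/d_b(p)^2$, so $p\in\text{\rm Cut}_b^\lambda(M)$. Conversely, for $p\in\text{\rm Cut}_b^{\lambda+\eps}(M)$ the excess $\eps$ produces a uniform slack $(2\lambda\eps+\eps^2)/d_b(p)^2$ which absorbs the upper-semicontinuity defect in $|\nabla u_m|(p)$ and the perturbation of $\lambda^2/u_m(p)^2$, so that $p\in E_{m,\lambda}$ itself for all $m$ large. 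The ``cut points are approached'' direction $\sup_{q\in\text{\rm Cut}_b(M)}d(q,E_m)\to 0$ of \eqref{eq:convergence non-contact set} then follows from density and compactness: approximate $q\in\text{\rm Cut}_b(M)$ by some $q'\in\text{\rm Cut}_b^\lambda(M)$, apply the previous step to place $q'\in E_{m,\lambda/2}\subset E_m$ for $m$ large, and upgrade pointwise convergence on the compact set $\text{\rm Cut}_b(M)$ to uniform via a finite net.

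The main obstacle is the reverse direction $\sup_{p\in E_m} d(p,\text{\rm Cut}_b(M))\to 0$: upper semicontinuity alone does not exclude a sequence $p_m\in E_m$ accumulating at some $p_\infty\notin\text{\rm Cut}_b(M)$, since there $|\nabla d_b|(p_\infty)=1$ remains compatible with $|\nabla u_m|(p_m)<1$ in the limit. The plan here is to exploit the Euler--Lagrange equation $-\Delta u_m = m/2$ valid in $E_m$: in a small neighbourhood $U$ of $p_\infty$ where $d_b$ is smooth, the nonnegative function $d_b - u_m$ tends to zero uniformly, yet satisfies the strong subharmonic inequality $\Delta(d_b - u_m)\ge m/2 - C$ on the portion of $U$ lying inside $E_m$; together with the uniform semiconcavity (which controls the geometry of the free boundary), a barrier/comparison argument should show that for $m$ large every point of $U$ must lie in the contact set, contradicting $p_m\in E_m$. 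This PDE closure is the crux of the proof.
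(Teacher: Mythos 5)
Your Step~2 is stated in the wrong direction, and this is where the proposal breaks down. For a semiconcave function the generalized gradient \eqref{e:def-gradient} satisfies $\partial^+_v u(p)=\min_{w\in\partial^+u(p)}\langle w,v\rangle$, so $\abs{\nabla u}(p)$ is the \emph{distance from the origin} to the superdifferential (equivalently, the minimal norm of its elements), not $\sup\{|w|:w\in\partial^+u(p)\}$: at a cut point with two minimizing geodesics the superdifferential of $d_b$ contains two unit vectors, yet $\abs{\nabla d_b}<1$ there (cf.\ \cref{prop:gradient less than 1}). With the correct identification, the stability of superdifferentials that you invoke yields \emph{lower} semicontinuity, $\abs{\nabla d_b}(p_\infty)\le\liminf_m\abs{\nabla u_m}(p_m)$ --- which is exactly the paper's \eqref{e:inequality} --- while your claimed bound $\limsup_m\abs{\nabla u_m}(p_m)\le\abs{\nabla d_b}(p_\infty)$ is false in general: take $u_m=d_b$ for all $m$ and $p_m\notin\mathrm{Cut}_b(M)$ converging to a cut point $p_\infty$ with $\abs{\nabla d_b}(p_\infty)<1$; then $\abs{\nabla u_m}(p_m)=1$ for every $m$. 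Note moreover that your deduction of the first half of \eqref{eq:convergence lambda cut locus} actually requires the lower semicontinuity (to pass to the limit in $\abs{\nabla u_m}(p_m)^2\le 1-\lambda^2/u_m(p_m)^2$ you must bound $\abs{\nabla d_b}(p_\infty)$ from above by $\liminf_m\abs{\nabla u_m}(p_m)$), so once the direction is corrected that half goes through and coincides with the paper's argument.

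The genuine gap is the second half of \eqref{eq:convergence lambda cut locus}. Since $\abs{\nabla u_m}$ is only lower semicontinuous along converging sequences, you cannot conclude that a fixed $p\in\mathrm{Cut}_b^{\lambda+\eps}(M)$ belongs to $E_{m,\lambda}$ for large $m$: already in one dimension, $f_m(x)=\sfrac1m-|x-\sfrac1m|$ converges uniformly to $-|x|$ with uniform concavity, yet $\abs{\nabla f_m}(0)=1$ for every $m$ while the limit has generalized gradient $0$ at the origin. What is needed --- and what the paper proves as the second, harder part of \cref{t:main-main}~\ref{item:gradients} via a gradient-flow argument combined with the rate of uniform convergence (\cref{sub:grad-continuity}) --- is a \emph{recovery sequence}: points $q_m\to p$ with $\abs{\nabla u_m}(q_m)\to\abs{\nabla d_b}(p)$, so that $q_m\in E_{m,\lambda}$ eventually and hence $d(p,E_{m,\lambda})\to0$. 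Your proposal contains no substitute for this step. The remaining ingredients are essentially sound: your energy-comparison proof of uniform convergence is a valid alternative to \cref{lemma:uniform convergence}; the barrier argument you sketch for $\sup_{p\in E_m}d(p,\mathrm{Cut}_b(M))\to0$ is precisely \cref{thm:non-contact set converges to cut locus} (no semiconcavity is needed there, only uniform convergence and the maximum principle); and the inclusion $\mathrm{Cut}_b(M)\subset E_m$ holds for every $m$ directly from \cref{lemma:regularized distance}, so the detour through the $\lambda$-cut loci for the easy direction of \eqref{eq:convergence non-contact set} is unnecessary and, as written, inherits the flaw above.
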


\begin{remark}[About the numerical computation of the cut locus]\label{rem:num}
We notice that the direct numerical approximation of the cut locus and the $\lambda$-cut locus is difficult and requires significant computational resources. Conversely, the variational problem \eqref{eq:gradient constraint manifold} consists in minimizing a convex functional under a convex constraint, which considerably simplifies this task. The numerical approach based on solving  \eqref{eq:gradient constraint manifold} will be the object of the forthcoming paper \cite{paper-num}.
\end{remark}

In order to prove \cref{thm:main}, we have to study the regularity of the solutions $u_m$ and the convergence of the asymptotic behavior (as $m\to\infty$) of the sequence $(u_m)$. We gather our results about the solutions of \eqref{eq:gradient constraint manifold} in the following theorem and we notice that \cref{thm:main} is in fact an immediate consequence of the claims \ref{item:uniform} and \ref{item:gradients} of \cref{t:main-main} below (see \cref{sub:proof-thm-main}).

\begin{theorem}[Regularity and convergence of $u_m$]\label{t:main-main}
Let $M$ be a compact Riemannian manifold of dimension $n$ and let $b\in M$ be fixed. Then, the following holds.
\begin{enumerate}[\rm(T1)]
\item\label{item:regularity} {\bf Regularity of $u_m$.} There exists a constant $m_0>0$, depending only on the manifold $M$, such that for every $m>m_0$, the minimizer $u_m$ of \eqref{eq:gradient constraint manifold} is locally $C^{1,1}$ on $M\setminus\{b\}$.
\item\label{item:Em} {\bf Properties of $E_m$.} For every $m\ge m_0$, $E_m$ is an open subset of $M$ and coincides with the set $\{u_m<d_b\}$. Moreover, $E_m$ contains $\text{\rm Cut}_b(M)$ and is at positive distance from $b$, that is $u_m=d_b$ in a neighborhood of $b$.
\item\label{item:monotonicity} {\bf Monotonicity of $u_m$ and $E_m$.} For every $m\ge m'\ge m_0$, we have $u_m\ge u_m'$. In particular, $E_m\subset E_{m'}$.
\item\label{item:semiconcavity} {\bf Semiconcavity of $u_m$.}
For every $\rho>0$, there are constants $C>0$ and $m_1>0$, depending on $\rho$ and on the manifold $M$, such that
\begin{equation}\label{eq:uniformly semiconcave manifold}
\text{$u_m$ is $C$-semiconcave on $M\setminus B_\rho(b)$,}
\end{equation}
for every $m\ge m_1$.

\item\label{item:uniform} {\bf Convergence of $u_m$.} The sequence $u_m$ converges uniformly on $M$ to the distance function $d_b$.

\item\label{item:gradients} {\bf Convergence of the gradients.} Let $p_\infty\in M\setminus\{b\}$. Then
\begin{itemize}
\item for every sequence $p_m\to p_\infty$, we have
\begin{equation}\label{e:inequality}
|\nabla d_b|(p_\infty)\le \liminf_{m\to\infty}|\nabla u_m|(p_m)\,;
\end{equation}
\item there exists a sequence $p_m\to p_\infty$ such that
\begin{equation}\label{e:equality}
|\nabla d_b|(p_\infty)= \lim_{m\to\infty}|\nabla u_m|(p_m)\,.
\end{equation}
\end{itemize}
\end{enumerate}

\end{theorem}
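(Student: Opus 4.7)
The plan is to establish items (T1)--(T6) in order. Parts (T1)--(T3) set up the variational/regularity framework, (T4) is the central technical statement, and (T5)--(T6) follow as soft consequences of (T1)--(T4) combined with the variational characterization.

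For (T1)--(T3), the starting point is the Euler--Lagrange characterization of $u_m$ as the unique solution of the variational inequality
\[
\int_M \nabla u_m \cdot \nabla(v-u_m)\,dV_g \;\ge\; \frac{m}{2}\int_M (v-u_m)\,dV_g
\]
for every admissible $v$; in the open set where the gradient constraint is not saturated, this forces $-\Delta u_m = m/2$. Since $|\nabla u_m|\le 1$ a.e.\ and $u_m(b)=0$, the pointwise bound $u_m\le d_b$ is immediate. I would prove (T1) by localizing in geodesic charts and invoking the classical $C^{1,1}$ theory for gradient-constrained variational inequalities (Brezis--Stampacchia, Evans), using $-\Delta u_m=m/2$ in $E_m$ together with Lipschitz Dirichlet data $u_m=d_b$ on $\partial E_m$; the threshold $m_0$ is fixed so that $u_m$ is nontrivial and the coincidence set is nonempty. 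For (T2), $E_m$ is open because $u_m\in C^1$, and the identity $E_m=\{u_m<d_b\}$ is proved in two steps: where $u_m=d_b$, integration along a minimizing geodesic from $b$ forces $|\nabla u_m|=1$; conversely, if $|\nabla u_m(p)|<1$ strictly while $u_m(p)=d_b(p)$, a small local perturbation of $u_m$ upward contradicts minimality. The inclusion $\text{Cut}_b(M)\subset E_m$ follows from the $C^{1,1}$ regularity of $u_m$ versus the known non-$C^1$ behavior of $d_b$ on the cut locus, and the equality $u_m=d_b$ on a neighborhood of $b$ is obtained by a local comparison argument using that $d_b$ is smooth near $b$. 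For (T3), monotonicity $u_m\ge u_{m'}$ for $m>m'$ comes from the standard device of testing the two variational inequalities against $\max(u_m,u_{m'})$ and $\min(u_m,u_{m'})$, both of which are admissible.

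The main obstacle is the uniform semiconcavity (T4). On the contact set $\{u_m=d_b\}$ the function $u_m=d_b$ is $C$-semiconcave outside $B_\rho(b)$ with $C$ depending only on $\rho$ and the Riemannian data. Inside $E_m$, $u_m$ solves $-\Delta u_m=m/2$ under the active constraint $|\nabla u_m|<1$, and the Dirichlet data on $\partial E_m$ inherit the semiconcavity of $d_b$. The plan is to propagate this boundary semiconcavity into $E_m$ via a second-order maximum principle applied to the invariant quantity $D^2 u_m(v,v)$, carefully tracking the curvature terms that appear when commuting covariant derivatives on $M$. The key difficulty is uniformity in $m$: since the right-hand side $m/2$ of the PDE is unbounded, only a quadratic-form bound $D^2 u_m \le C\, g$ (independent of the trace) can survive, and this is precisely semiconcavity. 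Curvature bounds on $M$ and the injectivity-radius-controlled semiconcavity of $d_b$ outside $B_\rho(b)$ together fix the constant $C$.

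Finally, for (T5), testing the variational inequality with the admissible $v=d_b$ yields
\[
m\int_M(d_b-u_m)\,dV_g \;\le\; \int_M\bigl(|\nabla d_b|^2-|\nabla u_m|^2\bigr)\,dV_g \;\le\; \mathrm{Vol}(M),
\]
so $\int_M(d_b-u_m)\to 0$; combined with the uniform Lipschitz bound and the monotonicity (T3), Dini's theorem upgrades this to uniform convergence $u_m\to d_b$. For (T6), inequality \eqref{e:inequality} follows from the definition of the generalized gradient and uniform convergence: a direction $v$ realizing $\partial^+_v d_b(p_\infty)$ can be tested against $u_m$ along a short geodesic issuing from $p_m$, and the $1$-Lipschitz bound together with $u_m\to d_b$ yields the one-sided inequality. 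The equality \eqref{e:equality} is where the uniform semiconcavity (T4) becomes essential: the superdifferentials $\partial^+ u_m(p)$ are nonempty, compact, convex, and upper semicontinuous jointly in $(m,p)$, so by selecting $p_m\to p_\infty$ for which an element of $\partial^+ u_m(p_m)$ converges to a realizer in $\partial^+ d_b(p_\infty)$, one obtains the required sequence.
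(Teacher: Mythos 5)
Your outline for \ref{item:monotonicity} and \ref{item:uniform} is essentially the paper's (the min/max competitor trick, and testing against $d_b$ to get $m\int_M(d_b-u_m)\le \mathrm{Vol}(M)$, which with the uniform Lipschitz bound does give uniform convergence -- a legitimate shortcut compared with the paper's barrier argument, which yields the sharper rate $C/m$). But there is a genuine gap at the foundation, in \ref{item:regularity}--\ref{item:Em}. You treat the identification $E_m=\{|\nabla u_m|<1\}=\{u_m<d_b\}$ and the $C^{1,1}$ regularity as routine consequences of ``classical theory for gradient-constrained variational inequalities.'' On a manifold no such off-the-shelf theory applies: the classical Euclidean results all pass through the Brezis--Sibony equivalence of the gradient constraint with the obstacle constraint $u\le d_b$, and establishing that equivalence on $M$ is precisely the paper's central \cref{prop:equivalence constraints}. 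The easy inclusion is $\{|\nabla u_m|<1\}\subset\{u_m<d_b\}$ (your geodesic-integration argument); the hard inclusion $\{u_m<d_b\}\subset\{|\nabla u_m|<1\}$ is proved by running the Bochner--Weitzenb\"ock formula on $|\nabla u_m^d|^2+K(u_m^d)^2$ in the non-contact set and applying the maximum principle, and it only closes when $m$ exceeds a threshold depending on the Ricci lower bound and the diameter. This is not a technicality: \cref{prop:counterexample} exhibits a surface on which the two problems have different minimizers for small $m$. Your stated reason for the threshold $m_0$ (``so that $u_m$ is nontrivial and the coincidence set is nonempty'') misses this entirely -- both of those hold for every $m>0$. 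Relatedly, your argument that $\text{\rm Cut}_b(M)\subset E_m$ via non-differentiability of $d_b$ only covers the dense subset of cut points with two minimizing geodesics; at conjugate cut points $d_b$ is differentiable, and the paper needs the regularized obstacle of \cref{lemma:regularized distance} to force $u_m<d_b$ there.

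Two further steps would not close as written. For \ref{item:semiconcavity}, a maximum principle applied to $D^2u_m(v,v)$ alone does not survive the curvature commutators, which produce error terms of size $|D^2u_m|$ and $|\nabla u_m|$ with no sign; the paper's proof works only because the auxiliary function $f_\eps=D^2u_m(X,X)+\eps(C_1|\nabla u_m|^2+C_2u_m^2-C_3u_m)$ contributes $2\eps|D^2u_m|^2+\eps|\nabla u_m|^2+\eps m$ to the Laplacian, and the $\eps m$ term (hence $m$ large) is what absorbs the remainder -- ``carefully tracking the curvature terms'' without this device fails. One also needs the Korevaar-type convexity-function reduction to geodesics contained in $E_m$, since $u_m$ is only $C^{1,1}$ across the free boundary and semiconcavity on the two regions separately does not concatenate for free. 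For \ref{item:gradients}, the inequality \eqref{e:inequality} does \emph{not} follow from the $1$-Lipschitz bound and uniform convergence: a Lipschitz bound gives no lower bound on $|\nabla u_m|(p_m)$ from a difference quotient based at $p_m$ (think of a tiny flat spot at $p_m$). The uniform semiconcavity is essential here too, via $u_m(q)\le u_m(p)+|\nabla u_m(p)|d(p,q)+\frac{C}{2}d(p,q)^2$. Likewise, upper semicontinuity of the superdifferentials gives only \eqref{e:inequality} again; for \eqref{e:equality} you need the opposite (graphical/Attouch-type) convergence, which is what the paper's gradient-flow contradiction argument actually proves.
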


\begin{remark}
The semiconcavity of $u_m$ \ref{item:semiconcavity} and the convergence of the gradients \ref{item:gradients} are the most technical part of the proof and are precisely the properties that allow to approximate the $\lambda$-cut locus with the sets $E_{m,\lambda}$.
\end{remark}

\begin{remark}
If we replace the manifold $M$ with a smooth domain $\Omega\subset\R^n$ and $d_b$ with the distance to the boundary of $\Omega$, the problem \eqref{eq:gradient constraint manifold} becomes the classical elastic-plastic torsion problem, which we discuss in detail in \cref{sub:omega}. We notice that, for this problem, the claims \ref{item:regularity}, \ref{item:Em}, \ref{item:monotonicity} and \ref{item:uniform} are well-known. The elastic-plastic torsion problem has a long history and inspired the study of numerous problems involving more general (even fully nonlinear) operators. The crucial point in all these problems is that the gradient constraint in \eqref{eq:gradient constraint manifold} can be transformed into an obstacle constraint on the function (see \cref{sub:omega}). Until now, this property was exclusive for the Euclidean setting and for operators depending only on $\nabla u$ and $u$, but not on the points $x\in\Omega$ (in fact, for operators with variable coefficients, this equivalence is known to be false). A consequence of our analysis is that this crucial equivalence is not exclusively Euclidean but is a property of the underlying Riemannian structure of the manifold (see \cref{prop:equivalence constraints}).
%
%
\end{remark}

The rest of the introduction is organized as follows. In the next \cref{sub:omega} we will discuss the relation of the $\lambda$-cut locus and the problem \eqref{eq:gradient constraint manifold} to the $\lambda$-medial axis of Chazal-Lieutier and the classical elastic-plastic torsion problem. In \cref{sub:plan} we will discuss the key points in the proof of \cref{t:main-main} and the plan of the paper.

\subsection{Medial axis and $\lambda$-medial axis in a domain $\Omega$}\label{sub:omega}
This section is dedicated to the Euclidean counterpart of \cref{thm:main}. We go through the definitions of the medial axis and the $\lambda$-medial axis of a domain in the euclidean space. Then, we discuss the approximation theorem of Caffarelli and Friedman and its relation to \cref{thm:main}. Throughout this section, we will use the following notation: $\Omega$ is a bounded open set with $C^2$ regular boundary in $\R^n$ and $\dom :\Omega\to\R$ is the distance function to the boundary of $\Omega$,
\begin{equation*}
d_{\partial\Omega}(x):=\min\big\{|x-y|\ :\ y\in\partial\Omega\big\}.
\end{equation*}
\subsubsection{Definition of medial axis and $\lambda$-medial axis}
The {\it medial axis} $\mathcal M(\Omega)$ is defined as the set of points of $\Omega$ with at least two different projections on the boundary $\partial \Omega$,
$$\mathcal{M}(\Omega) := \big\{ x \in \Omega\ :\ \exists y,z \in \partial \Omega,\ \text{such that }\  y \neq z \; \text{ and } \; \dom(x) = \abs{x-y} = \abs{x-z} \big\}.$$

\noindent \begin{minipage}{0.72\textwidth}
	One crucial geometric property of the medial axis $\calM(\Omega)$ is that it is unstable with respect to small perturbations of the boundary of $\Omega$. For instance, the medial axis of the circle consists of its center only, while the medial axis of a polygonal approximation (the regularity of the approximating sets can be improved to $C^\infty$ by rounding the corners) is the star-shaped set on Figure 1. We refer to \cite{attali2009stability} for a detailed account on medial axis, stability and computability. This instability makes computing numerically $\calM(\Omega)$ quite tricky. Indeed, any numerical approximation of $\Omega$ (for instance, with polygons) might introduce an artificial (and large) medial set. In order to deal with this problem, in \cite{chazal_lieutier_lambda}, Chazal and Lieutier defined the so called {\it$\lambda$-medial axis} of $\Omega$ by setting, for any $\lambda>0$,
	\end{minipage}
\quad\begin{minipage}{0.24\textwidth}
\begin{center}
\vspace*{0.2cm}
			\begin{tikzpicture}
			\filldraw (0,0) circle [radius=1pt];
			\draw ({cos(0*180/8)},{sin(0*180/8)}) -- ({-cos(0*180/8)},{-sin(0*180/8)});
			\draw ({cos(180/8)},{sin(180/8)}) -- ({-cos(180/8)},{-sin(180/8)});
			\draw ({cos(2*180/8)},{sin(2*180/8)}) -- ({-cos(2*180/8)},{-sin(2*180/8)});
			\draw ({cos(3*180/8)},{sin(3*180/8)}) -- ({-cos(3*180/8)},{-sin(3*180/8)});
			\draw ({cos(4*180/8)},{sin(4*180/8)}) -- ({-cos(4*180/8)},{-sin(4*180/8)});
			\draw ({cos(5*180/8)},{sin(5*180/8)}) -- ({-cos(5*180/8)},{-sin(5*180/8)});
			\draw ({cos(6*180/8)},{sin(6*180/8)}) -- ({-cos(6*180/8)},{-sin(6*180/8)});
			\draw ({cos(7*180/8)},{sin(7*180/8)}) -- ({-cos(7*180/8)},{-sin(7*180/8)});
			\foreach \k in {0,...,15}
			\draw ({cos((\k )* 180/8)},{sin((\k )* 180/8)}) -- ({cos((\k +1)* 180/8)},{sin((\k +1) *180/8)});
			\end{tikzpicture}
\end{center}

{\sc Figure 1. \it A polygonal\\ approximation of a circle,\\ with its medial axis.}\label{fig:medial axis}
\vspace*{0.2cm}
\end{minipage}

	\begin{equation}
\calM_\lambda (\Omega):= \{x\in\Omega : r(x)\geq\lambda\},
\end{equation}

\noindent where $r(x)$ is the radius of the smallest ball containing all the projections of $x$ on the boundary $\partial \Omega$, \textit{i.e.} the set $\{z\in \partial \Omega : \abs{x-z} = \dom(x) \}$. It is known that, for $\lambda$ small enough, $\calM_\lambda (\Omega)$ has the same homotopy type as $\calM (\Omega)$ (see \cite[section 3, theorem 2]{chazal_lieutier_lambda}) and that
\begin{equation*}
\calM(\Omega) = \bigcup_{\lambda>0}\calM_\lambda (\Omega).
\end{equation*}
These facts justify that $\calM_\lambda (\Omega)$ is a good approximation of $\calM(\Omega)$, for $\lambda$ small enough. The crucial  difference though is that $\calM_\lambda (\Omega)$ is stable with respect to small variations of $\Omega$, whereas $\calM(\Omega)$ is not (we refer to \cite[section 4]{chazal_lieutier_lambda} for precise statements and proofs).
Finally, we notice that the $\lambda$-medial axis can be equivalently defined (see \cite[section 2.1]{chazal_lieutier_lambda}) as
\begin{equation}\label{eq:equivalentdef}
\calM_\lambda (\Omega)= \left\{x\in \Omega : \abs{\nabla \dom(x)}^2 \leq 1 - \frac{\lambda^2}{\dom^2(x)}\right\},
\end{equation}
where $\nabla \dom$ denotes the generalized gradient wherever $\dom$ is not differentiable.

\bigskip

\subsubsection{Approximation of the medial axis}
Given a constant $m>0$ and a domain $\Omega$, as above, we consider the following {\it elastic-plastic torsion problem}
\begin{equation}\label{eq:elastoplastic}
\min\left\{\int_{\Omega}  \left(\abs{\nabla v}^2 - mv\right)dx\ :\ v\in H^1_0(\Omega),\ \abs{\nabla v} \leq 1\right\}.
\end{equation}
As in the case of \eqref{eq:gradient constraint manifold}, the problem \eqref{eq:elastoplastic} has a unique minimizer, which we will denote by $v_m$. Physically speaking, $v_m$ represents the stress function of a long bar of cross section $\Omega$, twisted with an angle $m$. The elastic-plastic torsion problem and the properties of its minimizer $v_m$ have been studied by various authors in the 60's and 70's (see for instance \cite{tingElasticplasticTorsionProblem1969a}, \cite{brezis_sibony_equivalence}, \cite{brezis1972multiplicateur}, \cite{caffarelliSmoothnessElasticPlasticFree1977}, \cite{tingElasticplasticTorsionProblem1977}, \cite{caffarelli_riviere_lipschitz}, \cite{friedman1980free} and \cite{caffarelliUnloadingElasticplasticTorsion1981}). In particular, in \cite{brezis_sibony_equivalence}, Brezis and Sibony proved that the gradient constraint in \eqref{eq:elastoplastic} can be replaced with an obstacle-type constraint on the function. Precisely, the minimizer $v_m$ of \eqref{eq:elastoplastic} is also the (unique) minimizer of
\begin{equation}\label{eq:elastoplasticequivalence}
\min\left\{\int_{\Omega}\left(\abs{\nabla v}^2 - mv\right)dx\ :\ v\in H^1_0(\Omega),\ v \leq \dom\right\}.
\end{equation}
Notice that this result was later generalized to a broader class of variational problems with convex constraints on the gradient (see \cite{treuEquivalenceTwoVariational2000}, \cite{maricondaGradientMaximumPrinciple2002} and \cite{safdariRegularityVectorvaluedVariational2018}). However, none of these will apply to our variant of the problem on manifolds, for which the equivalence of constraints fails in general (see Section \cref{s:esempio}).

\smallskip

Finally, using the equivalence of \eqref{eq:elastoplastic} and \eqref{eq:elastoplasticequivalence}, Caffarelli and Friedman (see \cite{caffarelli_friedman_elastoplastic}) proved that the sequence of {\it elastic sets}
$\{|\nabla v_m|<1\}\ $
Hausdorff converges, as $m\to+\infty$, to the medial axis $\mathcal M(\Omega)$.
To be precise, in \cite{caffarelli_friedman_elastoplastic}, it was showed that the elastic sets converge to the  so-called \emph{ridge} $\mathcal R(\Omega)$ which coincides with the closure of $\mathcal{M}(\Omega)$, when $\Omega$ has a $C^2$ regular boundary. This result from \cite{caffarelli_friedman_elastoplastic} is the euclidean counterpart of the first part of \cref{thm:main}. Nevertheless, the strategies from \cite{brezis_sibony_equivalence} and \cite{caffarelli_friedman_elastoplastic} cannot be reproduced on a manifold and do not imply the convergence of the $\lambda$-medial axis. In the proof of our \cref{thm:main}, we still aim at replacing the constraint on the gradient with a constraint on the function, but our approach is different and allows us to deal with the presence of the manifold and  to treat both the cut locus and the $\lambda$-cut locus.
In particular, we obtain the following approximation result for the $\lambda$-medial axis.

\begin{theorem}[Approximation of $\calM_\lambda (\Omega)$]\label{thm:omega}
	Let $\Omega$ be a bounded open set in $\R^n$ with $C^2$ regular boundary.  Then, setting
	\begin{equation*}
	E^\Omega_{m}=\big\{x\in \Omega : |\nabla v_m(x)|<1\big\}\qquad\text{and}\qquad E^\Omega_{m,\lambda}=\left\{x\in \Omega :  |\nabla v_m(x)|^2 \leq 1 - \frac{\lambda^2}{v_m^2(x)}\right\},
	\end{equation*}
	we have that, for any fixed $\eps>0$,
	\begin{equation*}
	\sup\limits_{x \in E^\Omega_{m,\lambda}} d\big(x,\calM_\lambda (\Omega)\big) \tend{m}{+\infty}0, 
	\quad \text{and} \quad  \sup\limits_{x \in \calM_{\lambda+\eps} (\Omega)} d(x,E^\Omega_{m,\lambda}) \tend{m}{+\infty}0.
	\end{equation*}
\end{theorem}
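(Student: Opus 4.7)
The plan is to deduce \cref{thm:omega} from the Euclidean counterpart of the statements \ref{item:uniform} and \ref{item:gradients} of \cref{t:main-main}. Specifically, I will use: (i) the uniform convergence $v_m\to d_{\partial\Omega}$ on $\overline{\Omega}$, and (ii) the two-sided convergence of gradients, meaning that for every $x_\infty\in\Omega$ one has
\begin{equation*}
|\nabla d_{\partial\Omega}|(x_\infty)\le \liminf_{m\to\infty}|\nabla v_m|(x_m)\qquad\text{for every }x_m\to x_\infty,
\end{equation*}
while there exists at least one sequence $x_m\to x_\infty$ along which the limit is attained as an equality. Both (i) and (ii) should be obtained by adapting the proofs of \ref{item:uniform} and \ref{item:gradients}; the adaptation is actually lighter in the Euclidean setting, since the equivalence between the gradient constraint in \eqref{eq:elastoplastic} and the obstacle constraint $v\le d_{\partial\Omega}$ is already available from \cite{brezis_sibony_equivalence}, and geodesics are simply straight segments.

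For the first convergence in \cref{thm:omega} I argue by contradiction. If $\sup_{x\in E^\Omega_{m,\lambda}}d(x,\calM_\lambda(\Omega))\not\to 0$, I extract $x_m\in E^\Omega_{m,\lambda}$ with $x_m\to x_\infty$ and $d(x_\infty,\calM_\lambda(\Omega))\ge\delta>0$. The membership condition $|\nabla v_m(x_m)|^2\le 1-\lambda^2/v_m^2(x_m)$ forces $v_m(x_m)\ge\lambda$, so by (i) we have $d_{\partial\Omega}(x_\infty)\ge\lambda$, in particular $x_\infty\in\Omega$. Using the liminf half of (ii) and the uniform convergence,
\begin{equation*}
|\nabla d_{\partial\Omega}|(x_\infty)^2\le\liminf_{m\to\infty}|\nabla v_m(x_m)|^2\le 1-\frac{\lambda^2}{d_{\partial\Omega}^2(x_\infty)},
\end{equation*}
which by \eqref{eq:equivalentdef} places $x_\infty$ inside $\calM_\lambda(\Omega)$, contradicting $d(x_\infty,\calM_\lambda(\Omega))\ge\delta$.

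The second convergence is proved by a symmetric argument. If there are $x_m\in\calM_{\lambda+\eps}(\Omega)$ with $d(x_m,E^\Omega_{m,\lambda})\ge\delta$, I pass to a limit $x_\infty$, which lies in $\Omega$ since the characterization \eqref{eq:equivalentdef} forces $d_{\partial\Omega}\ge\lambda+\eps$ on $\calM_{\lambda+\eps}(\Omega)$. Upper semicontinuity of the generalized gradient $|\nabla d_{\partial\Omega}|$ transports the strict inequality $|\nabla d_{\partial\Omega}|(x_\infty)^2\le 1-(\lambda+\eps)^2/d_{\partial\Omega}^2(x_\infty)$ to $x_\infty$. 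The existence clause of (ii) then yields $y_m\to x_\infty$ with $|\nabla v_m|(y_m)\to|\nabla d_{\partial\Omega}|(x_\infty)$; the $\eps$-gap, together with $v_m(y_m)\to d_{\partial\Omega}(x_\infty)$, gives $|\nabla v_m(y_m)|^2\le 1-\lambda^2/v_m^2(y_m)$ for all $m$ large, so that $y_m\in E^\Omega_{m,\lambda}$. Since $y_m\to x_\infty$ and $d(x_m,x_\infty)\to 0$, this contradicts $d(x_m,E^\Omega_{m,\lambda})\ge\delta$.

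The main obstacle is the existence clause of (ii): producing a sequence $y_m\to x_\infty$ realizing $|\nabla v_m|(y_m)\to|\nabla d_{\partial\Omega}|(x_\infty)$. The liminf bound is a soft semicontinuity property that comes essentially for free from convexity and the gradient constraint, but the matching upper bound requires carefully tracing the line segments from $\partial\Omega$ to $x_\infty$ along which $v_m$ coincides with $d_{\partial\Omega}$, and handling the case where $x_\infty$ lies in the non-smooth part of $d_{\partial\Omega}$, where several such segments collide. This is exactly where the uniform semiconcavity estimate \ref{item:semiconcavity} of \cref{t:main-main} is needed, and it constitutes the main technical content of the paper.
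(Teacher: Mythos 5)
Your reduction of \cref{thm:omega} to the Euclidean analogues of \ref{item:uniform} and \ref{item:gradients}, followed by the two contradiction arguments, is exactly the route the paper takes in \cref{sub:proof-thm-omega}: the paper applies \cref{prop:semiconcavity} to get uniform semiconcavity of $v_m$, invokes the known $C^{1,1}$ regularity and uniform convergence $v_m\to d_{\partial\Omega}$, deduces the two gradient statements as in \cref{s:gradients}, and then concludes verbatim as in the proof of \cref{thm:main}. Your handling of the extra Euclidean point ($v_m(x_m)\ge\lambda$ forcing $x_\infty\in\Omega$) is a sensible addition. Two corrections, though. First, to pass the membership $x_m\in\calM_{\lambda+\eps}(\Omega)$ to the limit $x_\infty$ you need \emph{lower} semicontinuity of $|\nabla d_{\partial\Omega}|$, not upper semicontinuity: the defining condition is an upper bound on $|\nabla d_{\partial\Omega}|(x_m)$, so you need $|\nabla d_{\partial\Omega}|(x_\infty)\le\liminf_m|\nabla d_{\partial\Omega}|(x_m)$; this is \cref{prop:gradient lsc} applied to the constant sequence $u_k\equiv d_{\partial\Omega}$ (upper semicontinuity actually fails at ridge points, where the gradient norm drops below the ambient value $1$). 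Second, your closing paragraph inverts the roles of the two halves of \ref{item:gradients}: in the paper it is the \emph{liminf} inequality that requires the uniform semiconcavity of the $v_m$ themselves (\cref{prop:gradient lsc} rests on \cref{l:under tangent plane} applied to each $v_m$ with a common constant), and it does not come ``for free from convexity and the gradient constraint'' --- the constraint $|\nabla v_m|\le 1$ only bounds the gradients from above, which is useless for a lower bound on the liminf. The existence clause, by contrast, is proved in \cref{sub:grad-continuity} by a gradient-flow argument that uses only the semiconcavity of $d_{\partial\Omega}$, the Lipschitz bound, and the rate of uniform convergence. This does not break your proof, since you cite (ii) as a black box, but the attribution of where the paper's main technical estimate enters is backwards.
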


\subsection{Proof of Theorem \ref{t:main-main} and plan of the paper}\label{sub:plan}

 We consider the variational problem
\begin{equation}\label{eq:distance constraint}
\min\left\{\int_{M}  \abs{\nabla u}^2 - mu\ :\ u\in H^1(M),\ u \leq d_b\right\}.
\end{equation}
It is immediate to check that \eqref{eq:distance constraint} admits a minimizer and that this minimizer is unique (this follows by the convexity of the functional and the constraint). We will denote by  $u_m^d:M\to\R$ ('d' stands for the 'distance' constraint) the unique minimizer of \eqref{eq:distance constraint}.

 \subsubsection{ Part I. Equivalence of \eqref{eq:gradient constraint manifold} and \eqref{eq:distance constraint}} Our first aim is to show that the problems \eqref{eq:gradient constraint manifold} and \eqref{eq:distance constraint} are equivalent, that is the minimizers $u_m$ and $u_m^d$ are the same. Now, since every function which is $1$-Lipschitz and is zero in $b$ stands below the distance function $b$, it is clear that $u_m$ can be used to test the optimality of $u_m^d$, that is, we have
$$\int_M\big(|\nabla u_m^d|^2-mu_m^d\big)\le \int_M\big(|\nabla u_m|^2-mu_m\big).$$
Notice that, if we are able to prove that the minimizer $u_m^d$ is $1$-Lipschitz, then we can use $u_m^d$ to test the minimality of $u_m$, i.e.
$$\int_M\big(|\nabla u_m^d|^2-mu_m^d\big)\ge \int_M\big(|\nabla u_m|^2-mu_m\big).$$
This gives that both $u_m$ and $u_m^d$ are solutions of \eqref{eq:gradient constraint manifold} (and also of \eqref{eq:distance constraint}), which means that they have to coincide. Thus, in order to prove that \eqref{eq:gradient constraint manifold} and \eqref{eq:distance constraint} are equivalent, we have to prove that
\begin{equation}\label{e:grad-u-m-d-bound}
|\nabla u_m^d|\le 1\quad\text{on}\quad M.
\end{equation}
In order to prove this, we proceed as follows:
\begin{itemize}
\item First, we prove that $u_m^d$ is $C^1$-regular locally in $M\setminus \{b\}$ (see \cref{prop:regularity}).
\item Then, from \cref{lemma:u=d around b} and \cref{lemma:regularized distance}, we deduce that
$$\text{\rm Cut}_b(M)\subset \{u_m^d<d_b\}\subset M\setminus\{b\}.$$
In particular, since $d_b$ is smooth away from $\{b\}$ and $\text{\rm Cut}_b(M)$, we get that on the boundary $\partial \{u_m^d<d_b\}$ both the distance function $d_b$ and the solution $u_m^d$ are differentiable and have the same gradient, which entails that $|\nabla u_m^d|=1$ on $\partial \{u_m^d<d_b\}$.
\item Finally, we use the fact that $u_m^d$ solves the PDE
$$\Delta u_m^d=m\quad\text{in}\quad \{u_m^d<d_b\},\qquad |\nabla u_m^d|=1\quad\text{on}\quad \{u_m^d=d_b\}$$
to deduce that $|\nabla u_m^d|\le 1$ also in the set $\{u_m^d<d_b\}$. Now, in the flat (Euclidean) case, this inequality is an immediate consequence of the fact that $|\nabla u_m^d|^2$ is subharmonic. On a general manifold $M$ the situation is more complicated as the curvature comes into play in the computation of  $\Delta\big(|\nabla u_m^d|^2\big)$. For this reason we are able to prove the bound $|\nabla u_m^d|\le 1$ on $M$ (and so the equivalence of the two problems) only in the case when $m$ is large enough. Before we give the precise statement of this result (see \cref{prop:equivalence constraints}), let us emphasize that this is not a mere technical assumption, but a consequence of the geometry of the manifold. In fact, in the appendix (\cref{prop:counterexample}), we give an example of a $2$-manifold $M$ for which the bound on the gradient fails when $m$ is small.
\end{itemize}

The following is the key result for the analysis of the solution of problem \eqref{eq:gradient constraint manifold}. The proof is given in \cref{sec:equivalence constraints}.

\begin{proposition}[Equivalence of \eqref{eq:gradient constraint manifold} and \eqref{eq:distance constraint}]\label{prop:equivalence constraints}
Let $M$ be an $n$-dimensional compact Riemannian manifold and let the constant $K\geq 0$ be a lower bound for the Ricci curvature :
	\begin{equation}
	\mathrm{Ric}\geq -K,
	\end{equation}
	where $\mathrm{Ric}$ denotes the Ricci curvature tensor of $M$. Then, for every
	\begin{equation}\label{e:bound-from-below-m-equivalence}
	m\geq \frac{1}{2}\max\Big\{\sqrt{nK(1+K\mathrm{diam}(M)^2)},\ n K \mathrm{diam}(M)\Big\},
	\end{equation}
	we have that
	\begin{equation}\label{e:inequality-grad-u-m-d}
	\abs{\nabla u_m^d}=1\quad \text{on}\quad \{d_b=u_m^d\},\qquad\text{and}\qquad  \abs{\nabla u_m^d}<1\quad \text{in}\quad E_m^d:=\{u_m^d<d_b\}.
	\end{equation}
	In particular, for $m$ as in \eqref{e:bound-from-below-m-equivalence}, we have that $u_m^d=u_m$, where $u_m$ is the minimizer of \eqref{eq:gradient constraint manifold}.
\end{proposition}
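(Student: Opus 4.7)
My plan is to execute the three-bullet roadmap sketched in the text after the statement: (i) upgrade $u_m^d$ to a $C^1$ function on $M\setminus\{b\}$ via \cref{prop:regularity}; (ii) locate the coincidence set geometrically so that $\partial E_m^d$ avoids both $b$ and $\mathrm{Cut}_b(M)$, using \cref{lemma:u=d around b} and \cref{lemma:regularized distance}; (iii) combine the classical PDE $\Delta u_m^d=m$ in $E_m^d$ with the Bochner formula and a carefully chosen auxiliary function to deduce $|\nabla u_m^d|\le 1$. Once this pointwise bound is available, the strict inequality $|\nabla u_m^d|<1$ in $E_m^d$ in \eqref{e:inequality-grad-u-m-d} will follow from the strong maximum principle applied to the same auxiliary function, the identity $|\nabla u_m^d|=1$ on the coincidence set is immediate from the smoothness of $d_b$ there, and the equality $u_m^d=u_m$ follows from the variational argument written out in the paragraph preceding \eqref{e:grad-u-m-d-bound}.

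Steps (i) and (ii) should be essentially bookkeeping given the tools developed elsewhere in the paper. They combine to give: $\partial E_m^d\subset M\setminus(\{b\}\cup\mathrm{Cut}_b(M))$, so $d_b$ is smooth near $\partial E_m^d$; both $u_m^d$ and $d_b$ are $C^1$ near $\partial E_m^d$ and tangent to each other there (hence $|\nabla u_m^d|=|\nabla d_b|=1$ on $\partial E_m^d$); and $u_m^d\ge 0$ on $M$ by the weak maximum principle applied to $\Delta u_m^d=m>0$ in $E_m^d$ with boundary values $d_b\ge 0$. Step (iii) is the heart of the argument. Since $\Delta u_m^d=m$ is constant, Bochner on $E_m^d$ yields
\[
\tfrac12\Delta|\nabla u_m^d|^2=|\nabla^2 u_m^d|^2+\mathrm{Ric}(\nabla u_m^d,\nabla u_m^d)\ge \frac{m^2}{n}-K|\nabla u_m^d|^2.
\]
I plan to introduce an auxiliary function of the form $F:=|\nabla u_m^d|^2+g(u_m^d)$ with $g(t)=Kt^2-ct$ (so that $g''\equiv 2K$), in order to absorb the Ricci correction into the second-derivative term $g''(u_m^d)|\nabla u_m^d|^2=2K|\nabla u_m^d|^2$. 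Using $u_m^d\ge 0$, this gives $\Delta F\ge 2m^2/n - cm$ in $E_m^d$, which is nonnegative as soon as $c\le 2m/n$; on $\partial E_m^d$ one has $F=1+d_b(Kd_b-c)\le 1$ provided $c\ge K\mathrm{diam}(M)$. The two requirements on $c$ are jointly feasible exactly when $nK\mathrm{diam}(M)\le 2m$, which is the second branch of \eqref{e:bound-from-below-m-equivalence}. With such a $c$ the maximum principle gives $F\le 1$ in $E_m^d$, hence the intermediate bound $|\nabla u_m^d|^2\le 1+u_m^d(c-Ku_m^d)\le 1+K\mathrm{diam}(M)^2/4$.

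The main obstacle is closing the gap between this intermediate estimate and the desired $|\nabla u_m^d|^2\le 1$, and I expect this is precisely where the first branch $4m^2\ge nK(1+K\mathrm{diam}(M)^2)$ of \eqref{e:bound-from-below-m-equivalence} enters. The cleanest route should be by contradiction: if $|\nabla u_m^d|^2$ exceeds $1$ anywhere in $E_m^d$, its maximum over $\overline{E_m^d}$ is attained at an interior point $p_0$, and the critical-point condition $\nabla|\nabla u_m^d|^2(p_0)=0$ forces $\nabla u_m^d(p_0)$ to be a zero eigenvector of $\nabla^2 u_m^d(p_0)$. This extra constraint refines Cauchy--Schwarz to $|\nabla^2 u_m^d(p_0)|^2\ge m^2/(n-1)$, and Bochner together with $\Delta|\nabla u_m^d|^2(p_0)\le 0$ then yields the lower bound $|\nabla u_m^d(p_0)|^2\ge \frac{m^2}{K(n-1)}$. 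Matching this lower bound against the intermediate upper bound from the previous paragraph produces exactly the kind of inequality recorded in the first branch of \eqref{e:bound-from-below-m-equivalence}, after which the strong maximum principle upgrades the conclusion to the strict interior inequality in \eqref{e:inequality-grad-u-m-d}. That a quantitative lower bound on $m$ of this sort is actually necessary, and not a defect of the method, is confirmed by \cref{prop:counterexample}, where the equivalence of \eqref{eq:gradient constraint manifold} and \eqref{eq:distance constraint} genuinely fails on a two-dimensional manifold for small $m$.
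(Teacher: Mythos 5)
Your architecture is the same as the paper's: $C^1$ regularity of $u_m^d$, localization of the contact set away from $b$ and $\mathrm{Cut}_b(M)$ so that tangency gives $|\nabla u_m^d|=1$ there, then Bochner plus an auxiliary function and the maximum principle inside $E_m^d$, and finally the variational comparison to get $u_m^d=u_m$. The one genuine error is the sign of the Euler--Lagrange equation: in $E_m^d$ the minimizer of $\int_M(|\nabla u|^2-mu)$ is \emph{super}harmonic (the paper's \eqref{eq:EL} reads $\Delta u_m^d=-2m$), not subharmonic. This breaks your stated derivation of $u_m^d\ge 0$ (``the weak maximum principle applied to $\Delta u_m^d=m>0$ with boundary values $d_b\ge 0$'' is not a valid implication for a subharmonic function; with the correct sign the minimum principle does apply, and the paper gets $u_m^d\ge0$ more simply by noting that $\max(u_m^d,0)$ is an admissible competitor). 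It also changes the absorption of the term $g'(u_m^d)\,\Delta u_m^d$ in $\Delta F$: with $\Delta u_m^d<0$ you must control it using $u_m^d\le \mathrm{diam}(M)$ rather than $u_m^d\ge 0$. Both repairs go through and still land on essentially the second branch of \eqref{e:bound-from-below-m-equivalence}, but the computation as written is for the wrong equation, and since the constant in \eqref{e:bound-from-below-m-equivalence} is calibrated to $\Delta u_m^d=-2m$, none of your thresholds will match until this is fixed.

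Beyond that, your two choices are legitimate variants of the paper's. Your auxiliary function $|\nabla u_m^d|^2+K(u_m^d)^2-cu_m^d$ differs from the paper's $|\nabla u_m^d|^2+K(u_m^d)^2$ only by the linear term, which normalizes the boundary datum to $1$ and yields the slightly sharper intermediate bound $1+K\,\mathrm{diam}(M)^2/4$ in place of $1+K\,\mathrm{diam}(M)^2$. For the last step the paper does something simpler than your interior-maximum argument: it re-inserts the intermediate bound into Bochner to get $\Delta(|\nabla u_m^d|^2)\ge \frac{8}{n}m^2-2K(1+K\,\mathrm{diam}(M)^2)\ge 0$ on all of $E_m^d$ under the first branch of \eqref{e:bound-from-below-m-equivalence}, and then the maximum principle with boundary datum $1$ gives $|\nabla u_m^d|^2\le 1$, with strictness in $E_m^d$ coming from the same subharmonicity. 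Your route --- interior maximum, the kernel condition $D^2u_m^d(\nabla u_m^d,\cdot)=0$ at the critical point, and the refined bound $|D^2u_m^d|^2\ge(\Delta u_m^d)^2/(n-1)$ --- is correct, and once the normalization is fixed the required inequality $(\Delta u_m^d)^2/(n-1)>K\bigl(1+K\,\mathrm{diam}(M)^2/4\bigr)$ is indeed implied by the first branch (since $n(1+t)\ge (n-1)(1+t/4)$ for $t\ge0$); but it buys nothing over the crude $(\Delta u_m^d)^2/n$ bound combined with global subharmonicity, and it leaves the strict inequality in $E_m^d$ to a separate argument. I would adopt the paper's global-subharmonicity step, which recovers \eqref{e:bound-from-below-m-equivalence} exactly and settles both claims of \eqref{e:inequality-grad-u-m-d} at once.
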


Finally, as a corollary of \cref{prop:equivalence constraints}, we obtain the first two claims of \cref{t:main-main}.

\begin{proof}[Proof of \cref{t:main-main} \ref{item:regularity} and \ref{item:Em}]
 By \cref{prop:equivalence constraints} we have that $u_m=u_m^d$. From the regularity of $u_m^d$ (\cref{prop:regularity}, \cref{lemma:u=d around b} and \cref{lemma:regularized distance}), we obtain \ref{item:regularity} and \ref{item:Em}.
\end{proof}

Moreover, as in the classical case of the elastic-plastic torsion problem (see \cite{caffarelli_friedman_elastoplastic}), we can now use the structure of \eqref{eq:distance constraint} to obtain information about the monotonicity of $E_m$ and the uniform convergence of $u_m$.

\begin{proof}[Proof of \cref{t:main-main} \ref{item:monotonicity} and \ref{item:uniform}]
The uniform convergence $u_m^d\to d_b$ on $M$, as $m\to \infty$, is proved in \cref{lemma:uniform convergence}. The monotonicity of $u_m$ and $E_m$, and the Hausdorff convergence of $E_m$ to $\text{\rm Cut}_b(M)$, now follow from \cref{thm:non-contact set converges to cut locus}.
\end{proof}


\subsubsection{Part II. Uniform semiconcavity and convergence of the gradients} We recall that our final objective is to prove the convergence of the sets $E_{m,\lambda}$ (\cref{thm:main}) and $E_{m,\lambda}^\Omega$ (\cref{thm:omega}). Now, from the definition of $E_{m,\lambda}$, it is clear that this boils down to proving a convergence result for the gradients $|\nabla u_m|$. On the other hand, we cannot expect any uniform estimate on the modulus of continuity of $|\nabla u_m|$; in fact, the sequence $u_m$ converges (uniformly) to the distance function $d_b$, which is not even differentiable at all points. Thus, we adopt a different strategy and we prove that the solutions are uniformly semiconcave, where our definition of semiconcavity is the following.
\begin{definition}[$C$-semiconcavity]\label{d:semiconcavity}
	Given a constant $C>0$, a function $u$ is said to be $C$-semiconcave on $M$ if and only if for any unit speed geodesic $\gamma:[a,b] \to M$, the function $t \mapsto C t^2 - u(\gamma(t))$ is convex.
	Moreover,
	\begin{itemize}
	\item we say that $u$ is semiconcave if it is $C$-semiconcave for some constant $C>0$;
	\item we say that $u$ is locally semiconcave if for any $p \in M$, $u$ is semiconcave in a neighborhood of $p$.
	\end{itemize}
\end{definition}

The most technical result of the paper is \cref{t:main-main} \ref{item:semiconcavity}, which we prove in \cref{sec:semiconcavity}. The key result is \cref{prop:semiconcavity} and applies to both \cref{t:main-main} and \cref{thm:omega}. Let us briefly give the idea of the proof of this proposition here, directly in the setting of \cref{t:main-main} \ref{item:semiconcavity}.\smallskip

\begin{proof}[Sketch of the proof of \cref{t:main-main} \ref{item:semiconcavity}]  First, we fix a constant $C_d$ such that the distance function $d_b$ is $C_d$-semiconcave on $M\setminus B_\rho(b)$. Then, for every unit speed geodesic $\gamma:[a,b]\to M$, and every $\lambda\in[0,1]$, we define the function
\begin{align*}
c(\gamma,\lambda)
&:= \lambda(1-\lambda)(C_d+1)(b-a)^2  -\Big((1-\lambda)u_m(\gamma(a)) + \lambda u_m(\gamma(b))-u_m(\gamma(\lambda_{ab}))\Big),
\end{align*}
where $\lambda_{ab} = (1-\lambda)a +\lambda b$.
We will show that the minimum of this function over all geodesics $\gamma$ and all $\lambda$ is positive, which will give that $u$ is $(C_d+1)$-semiconcave. First, we show that for any unit speed geodesic $\gamma$ and $\lambda\in(0,1)$, we can build a unit speed geodesic $\widehat{\gamma}:[a,b]\to M$ and $\widehat{\lambda}\in (0,1)$, such that
\begin{equation*}
	c(\widehat{\gamma},\widehat{\lambda},u_m)\leq c(\gamma,\lambda,u_m) \quad \text{and} \quad \widehat{\gamma}(a,b) \subset E_m = \{u_m< d_b\}.
\end{equation*}
This follows from the semiconcavity of $d_b$ and the inequality $u_m\le d_b$ (this is explained in detail in the proof of \cref{prop:semiconcavity}). Thus, we only need to show the semiconcavity of $u_m$ in the non-contact region $E_m$. Since $u_m$ is smooth in $E_m$, we need to prove that (see \cref{prop:D2})
$$D^2u_m\leq (C_d+1)Id\quad\text{in}\quad E_m.$$
In order to prove this inequality, for every $p\in E_m$ and $X\in \mathbb S^{n-1}(T_pM)$ we consider an auxiliary function of the form
\begin{equation*}
f_\varepsilon(p,X) := D^2u_m(X,X) + \varepsilon\left(C_1 \abs{\nabla u_m}^2(p) + C_2 u_m^2(p) - C_3 u_m(p)\right),
\end{equation*}
and we show that for $\varepsilon>0$ small enough and $m$ large enough, we have $\displaystyle f_{\varepsilon}\leq C_d+ \sfrac{1}{2}.$ We suppose that the maximum of $f_\eps$ is achieved for some $q\in E_m$ and some $Y\in \mathbb S^{n-1}(T_qM)$ (the case when the minimum is achieved for $q\in \partial E_m$ is a consequence of known estimates for the solutions of the obstacle problem with variable coefficients, see \cref{s:gradients}). Then, we construct, locally around $q$, a function of the form
$$p\mapsto f_\eps(p,X(p))\quad\text{where}\quad X(p)\in  \mathbb S^{n-1}(T_pM),$$
and we compute its Laplacian in the variable $p$ (notice that in the flat euclidean case we can simply take the section $p\mapsto X(p)$ to be constant). Finally, we obtain that for an appropriate choice of $\eps$ and $m$, the Laplacian of this function has to be positive, which contradicts the minimality of $q$ and concludes the proof.
\end{proof}

The main part of the proof of \cref{t:main-main} \ref{item:semiconcavity} is contained in \cref{prop:semiconcavity}, which applies to both \cref{t:main-main} and \cref{thm:omega}. In the proof of \cref{prop:semiconcavity}, the  function $c$ is the Riemannian counterpart of the Korevaar's convexity function (see \cite{korevaar_maxprinc});
in computing the Laplacian of $f_\eps(p,X(p))$ we use some of Guan's second order estimates for Hessian equations in Riemannian manifolds (see \cite{guanSecondOrderEstimates2014}).

\medskip

At this point, the convergence of the gradients $|\nabla u_m|$ (\cref{t:main-main} \ref{item:gradients}) follows from the uniform semiconcavity of $u_m$ by a general argument (we give the proof of this fact in \cref{s:gradients}). We are now in position to prove \cref{thm:main}.
%
%

%

\subsection{Proof of \cref{thm:main}}\label{sub:proof-thm-main} The Hausdorff convergence of the elastic sets $E_m$ to $\text{\rm Cut}_b(M)$ is a consequence from the uniform convergence (\cref{t:main-main} \ref{item:uniform}) of the solutions $u_m$ to the distance function $d_b$, as explained in \cref{thm:non-contact set converges to cut locus}.
Let us now prove the first claim in \eqref{eq:convergence lambda cut locus}.  Suppose by contradiction that there are a constant $\delta>0$, a sequence $m_k\to\infty$ and a sequence of points $p_k$ such that
\begin{equation}\label{e:contra1}
p_k\in E_{m_k,\lambda}\quad\text{and}\quad d\big(p_k,\text{\rm Cut}_b^\lambda(M)\big) >\delta.
\end{equation}
By the facts that $M$ is compact and that $u_{m_k}$ coincides with the distance function $d_b$ in a neighborhood of $b$ (that does not depend on $k$), we may suppose that $p_k$ converges to some $p_\infty\in M\setminus\{b\}$.  Now, from the uniform convergence of $u_{m_k}$ and \cref{t:main-main} \ref{item:gradients}, we get that
$$|\nabla d_b|(p_\infty)\le\liminf_{k\to\infty}|\nabla u_{m_k}|(p_k)\le \lim_{k\to\infty}\left(1-\frac{\lambda^2}{u_{m_k}^2(p_k)}\right)=1-\frac{\lambda^2}{d_b^2(p_\infty)},$$
which means that $p_\infty\in \text{\rm Cut}_b^\lambda(M)$, in contradiction with \eqref{e:contra1}.

Suppose now that the second claim in \eqref{eq:convergence lambda cut locus} does not hold.
Then, there are a constant $\delta>0$, a sequence $m_k\to\infty$ and a sequence of points $p_k\in M\in\{b\}$ such that
\begin{equation*}
p_k\in \text{\rm Cut}_b^{\lambda+\eps}(M)\qquad\text{and}\qquad d\big(p_k,E_{m_k,\lambda}\big) >\delta\quad\text{for every}\quad k\ge 0.
\end{equation*}
Up to extracting a subsequence, we may suppose that $p_k$ converges to a point $p_\infty$ such that
\begin{equation}\label{e:contra2}
p_\infty\in  \text{\rm Cut}_b^{\lambda+\eps}(M)\qquad\text{and}\qquad d\big(p_\infty,E_{m_k,\lambda}\big) >\frac\delta2\quad\text{for every}\quad k\ge 0.
\end{equation}
Now, by \cref{t:main-main} \ref{item:gradients}, there is a sequence $q_k\to p_\infty$ such that
$$|\nabla d_b|(p_\infty)=\lim_{k\to\infty}|\nabla u_{m_k}|(q_k).$$
In particular, since $p_\infty\in \text{\rm Cut}_b^{\lambda+\eps}(M)$, we have
$$\lim_{k\to\infty}\left(|\nabla u_{m_k}|(q_k)-1+\frac{\lambda^2}{u_{m_k}^2(q_k)}\right)=|\nabla d_{b}|(p_\infty)-1+\frac{\lambda^2}{d_{b}^2(p_\infty)}\le -\frac{2\eps\lambda+\eps^2}{d_{b}^2(p_\infty)}.$$
Thus, the left-hand side is negative for $k$ large enough and so, we have $q_k\in E_{m_k,\lambda}$, which is a contradiction with \eqref{e:contra2}. This concludes the proof of \cref{thm:main}. \qed

\subsection{Proof of \cref{thm:omega}}\label{sub:proof-thm-omega} As shown in \cref{sec:semiconcavity}, we may apply \cref{prop:semiconcavity} to get that the functions $v_m$ are uniformly semiconcave on $\Omega$. It is already known that the solution $v_m$ of \eqref{eq:elastoplastic} and \eqref{eq:elastoplasticequivalence} is locally $C^{1,1}$ on $\Omega$.
It is also well-known that $v_m$ converges uniformly to $d_{\partial\Omega}$ as $m\to\infty$. As a consequence, reasoning as in \cref{s:gradients}, we get that for every $x_\infty\in\Omega$, the following holds:
\begin{itemize}
	\item if $x_m\to x_\infty$, then $\displaystyle\ |\nabla d_{\partial\Omega}|(x_\infty)\le \liminf_{m\to\infty}|\nabla v_m|(x_m)\,;$
	\item there exists a sequence $x_m\to x_\infty$ such that
$\displaystyle\ |\nabla d_{\partial\Omega}|(x_\infty)= \lim_{m\to\infty}|\nabla v_m|(x_m)\,.$
\end{itemize}

Now, the conclusion follows as in the proof of \cref{thm:main}.\qed

\subsection*{Acknowledgements}
The three authors were partially supported by Agence Nationale de la Recherche (ANR) with the projects GeoSpec (LabEx PERSYVAL-Lab, ANR-11-LABX-0025-01), CoMeDiC (ANR-15-CE40-0006) and ShapO (ANR-18-CE40-0013). The third author is supported by the European Research Council (ERC), under the European Union’s Horizon 2020 research and innovation programme, through the project ERC VAREG - \it Variational approach to the regularity of the free boundaries \rm (grant agreement No. 853404).

\section{Notation, definitions and preliminary results}\label{s:notation}

\subsection{General notation}
We will denote by $g$ the metric on $M$.  $TM$ denotes the tangent bundle of $M$ and $T_pM$ the tangent space of $M$ at $p$. By $\mathbb S^{n-1}(T_pM)$ we will denote the unit sphere in $T_pM$, that is
$$\mathbb S^{n-1}(T_pM):=\big\{X\in T_pM\ :\ g(X,X)=1\big\}.$$
$\Exp:TM\to M$ is the global exponential map, while $\exp_p$ is its restriction to $T_pM$. Finally, given a function $u$ on $M$, $Du$ is the differential of $u$, $\nabla u$ is the gradient, and $D^k u$ is the $k$-th covariant derivative (in particular, by $D$ we denote also the Riemannian connection on $M$). Thus, for smooth vector fields $X,Y:M\to TM$, we have
$$g(\nabla u,X):=Du(X)=D_Xu=Xu\qquad\text{and}\qquad D^2u(X,Y)=g(D_X(\nabla u),Y).$$
We will also use the notation $|\nabla u|^2$ for $g(\nabla u,\nabla u)$, and $\Delta u$ for the Laplace-Beltrami operator on $M$. We notice that $-\Delta$ is positive, that is, we have the integration by parts formula
$$\int_M g(\nabla u,\nabla v)=\int_M (-\Delta u) v,$$
for every $u,v\in C^2(M)$. Unless otherwise specified, all the integrals will be taken with respect to the volume form associated to the Riemannian metric $g$. Finally, we recall that $H^1(M)$ denotes the usual space of Sobolev functions on $M$, which is the closure of $C^1(M)$ with respect to the $H^1$-norm defined as
$$\|u\|_{H^1}^2=\int_M |\nabla u|^2+\int_M u^2.$$

\subsection{Semiconcave functions}
In this section, we gather some of the main properties of semiconcave functions on smooth Riemannian manifolds, which we will need in the proof of \cref{t:main-main}.
Some of these results can be found in \cite{petruninSemiconcaveFunctionsAlexandrov2006}, in the context of Alexandrov spaces, while for a more detailed introduction to semiconcave functions in the framework of euclidean spaces we refer to \cite{cannarsa2004semiconcave}.
\medskip

Let $M$ be a Riemannian manifold, $u:M\to\R$ a given function and $\gamma:[a,b] \to M$ be a curve in $M$. It is immediate to check that the function
$$t\mapsto Ct^2-u(\gamma(t))$$
is convex on $[a,b]$ if and only if
\begin{equation}\label{e:semiconcavity}
(1-\lambda)u(\gamma(a)) + \lambda u(\gamma(b)) - u(\gamma(\lambda_{ab})) \leq C \lambda (1-\lambda) (b-a)^2\quad\text{for any}\quad \lambda \in [0,1],
\end{equation}
where here and throughout the paper, we use the notation
\begin{equation}\label{e:lambda_a_b}
\lambda_{ab}:= (1-\lambda)a +\lambda b\qquad\text{for any}\qquad a,b,\lambda\in \R.
\end{equation}
In particular, this means that the function $u$ is $C$-semiconcave on $M$ if and only if \eqref{e:semiconcavity} holds for any unit speed geodesic $\gamma:[a,b] \to M$. Analogously, $u$ is locally semiconcave if for every $p\in M$ there is a geodesic ball $B_\rho(p)$ and a constant $C_p>0$ such that \eqref{e:semiconcavity} holds (with $C=C_p$) for every unit speed geodesic $\gamma:[a,b]\to B_\rho(p)$.
\begin{remark}\label{rem:local_semiconcavity}
On a compact Riemannian manifold, semiconcavity and local semiconcavity are the same.
\end{remark}

\begin{proposition}[Semiconcavity in terms of $D^2u$]\label{prop:D2}
Let $u:M\to\R$ be $C^2$-regular. Then
	$$D^2u\leq 2 C\,\text{ on }\, M\qquad\text{if and only if}\qquad\text{$u$ is $C$-semiconcave on $M$}.$$
\end{proposition}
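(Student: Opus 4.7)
The plan is to reduce semiconcavity along geodesics to a one-variable calculus statement and then exploit the geodesic equation $D_{\gamma'}\gamma' = 0$ to identify the second derivative along $\gamma$ with $D^2u$.

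First I would observe that, since $u$ is $C^2$, for any unit speed geodesic $\gamma:[a,b]\to M$ the function $\varphi(t):=Ct^2-u(\gamma(t))$ is $C^2$ on $[a,b]$, so its convexity is equivalent to $\varphi''(t)\ge 0$ for every $t\in(a,b)$. Using the chain rule and the compatibility of the Riemannian connection with $g$, I would compute
\begin{equation*}
\frac{d}{dt}u(\gamma(t)) = g(\nabla u(\gamma(t)),\gamma'(t)),
\qquad
\frac{d^2}{dt^2}u(\gamma(t)) = D^2u(\gamma'(t),\gamma'(t)) + g(\nabla u,D_{\gamma'(t)}\gamma'(t)).
\end{equation*}
Since $\gamma$ is a geodesic, $D_{\gamma'(t)}\gamma'(t)=0$, so the last term vanishes and one obtains
\begin{equation*}
\varphi''(t) = 2C - D^2u(\gamma'(t),\gamma'(t)).
\end{equation*}
Because $\gamma$ is unit speed, $\gamma'(t)\in\mathbb S^{n-1}(T_{\gamma(t)}M)$, so the convexity of $\varphi$ for every unit speed geodesic is equivalent to $D^2u(X,X)\le 2C$ for every $p\in M$ and every unit $X\in T_pM$, i.e.\ to $D^2u\le 2C$ as a symmetric bilinear form.

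From this identity the two implications follow immediately. For the reverse direction, assuming $D^2u\le 2C$, the formula for $\varphi''$ gives $\varphi''\ge 0$ along every unit speed geodesic, hence $\varphi$ is convex and \eqref{e:semiconcavity} holds by the characterization at the beginning of the subsection, so $u$ is $C$-semiconcave. For the forward direction, given any $p\in M$ and any $X\in\mathbb S^{n-1}(T_pM)$, I would take the unit speed geodesic $\gamma$ defined on a small interval around $0$ with $\gamma(0)=p$ and $\gamma'(0)=X$ (using $\Exp$); semiconcavity forces $\varphi''(0)\ge 0$, that is $D^2u(X,X)\le 2C$, and since $X$ and $p$ are arbitrary we conclude $D^2u\le 2C$ on $M$.

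There is no real obstacle here: the only subtlety is making sure the boundary term $g(\nabla u, D_{\gamma'}\gamma')$ drops out, which is exactly what the geodesic equation provides, and remembering that unit speed of $\gamma$ is what lets $D^2u(\gamma',\gamma')$ be tested against an arbitrary unit tangent vector at an arbitrary point.
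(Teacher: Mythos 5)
Your argument is correct and is essentially the same as the paper's: both compute $\frac{d^2}{dt^2}u(\gamma(t)) = D^2u(\dot\gamma,\dot\gamma) + Du(D_{\dot\gamma}\dot\gamma)$ and use the geodesic equation to drop the second term, reducing convexity of $t\mapsto Ct^2-u(\gamma(t))$ to the pointwise bound $D^2u\le 2C$ on unit tangent vectors. Your explicit treatment of the forward direction (realizing any unit vector as the velocity of a short geodesic) is only spelled out slightly more than in the paper, which leaves it implicit.
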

\begin{proof}
Let $\gamma:[a,b]\to M$ be a unit speed geodesic. Then the function $t\mapsto Ct^2-u(\gamma(t))$ is convex if and only if
$$0 \leq 2C-\frac{d^2}{dt^2}u(\gamma(t))= 2C-\frac{d}{dt} Du(\dot\gamma(t))= 2C-\Big(D^2u(\dot\gamma(t),\dot\gamma(t))+Du(D_{\dot\gamma(t)}\dot\gamma(t))\Big)= 2C-D^2u\big(\dot\gamma(t),\dot\gamma(t)\big).$$
The claim follows.
\end{proof}

The semiconcavity can also be read in local coordinates as follows.
\begin{proposition}[Semiconcavity in local coordinates]\label{prop:definitions semiconcavity_intro}
Let $u:M\to \R$ be a locally Lipschitz function on a Riemannian manifold $M$. Then, $u$ is locally semiconcave if and only if for any chart $\psi$ of $M$, $u \circ \psi ^{-1}$ is locally semiconcave as a function on $\rn$.
\end{proposition}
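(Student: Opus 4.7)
The plan is to prove both implications by a direct comparison between unit-speed geodesics in $M$ and affine segments in a chart, exploiting the fact that $u$ is locally Lipschitz. The characterization \eqref{e:semiconcavity} of semiconcavity has the same form in $M$ (with geodesics) and in $\R^n$ (with affine segments), so the only obstruction to transferring one inequality into the other is the small deviation between a geodesic and the corresponding affine segment joining the same endpoints. The key is to quantify this deviation in a way that preserves the $\lambda(1-\lambda)$ factor, since this factor must survive on the right-hand side of the semiconcavity inequality.

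More concretely, I would fix a chart $\psi:U\to \R^n$ and a compact neighborhood $K\subset U$ of a given point $p_0$. Let $L$ be the Lipschitz constant of $u$ on $K$, and note that the Christoffel symbols of the pushforward metric are bounded by some constant on $\psi(K)$, so that the $\R^n$-distance and the Riemannian distance on $K$ are equivalent. For $p,q\in K$ close enough, set $x=\psi(p)$, $y=\psi(q)$, let $\gamma$ be the minimizing unit-speed geodesic from $p$ to $q$ reparametrized on $[0,1]$, and let $\sigma(\lambda)=(1-\lambda)x+\lambda y$. Reading the geodesic ODE in the chart, the curve $\tilde\gamma=\psi\circ\gamma$ satisfies $|\dot{\tilde\gamma}|=O(|x-y|)$ and $|\ddot{\tilde\gamma}|=O(|x-y|^2)$, uniformly on $K$. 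Since $\tilde\gamma$ and $\sigma$ agree at $\lambda=0,1$, integrating $\ddot{(\tilde\gamma-\sigma)}$ twice with zero boundary data gives
$$|\tilde\gamma(\lambda)-\sigma(\lambda)|\le C\,\lambda(1-\lambda)\,|x-y|^2,$$
for a constant $C$ depending only on $\psi$ and $K$. This is the crucial quantitative estimate.

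Granted this comparison, the forward direction follows by writing, with $\tilde u = u\circ\psi^{-1}$,
$$(1-\lambda)\tilde u(x)+\lambda\tilde u(y)-\tilde u(\sigma(\lambda))
=\Bigl[(1-\lambda)u(p)+\lambda u(q)-u(\gamma(\lambda))\Bigr]+\Bigl[u(\gamma(\lambda))-\tilde u(\sigma(\lambda))\Bigr].$$
The first bracket is bounded by $C_1\lambda(1-\lambda)d(p,q)^2$ by the local semiconcavity of $u$ along geodesics, and the second is bounded by $L\cdot C\,\lambda(1-\lambda)|x-y|^2$ using the Lipschitz estimate together with the chart comparison. Since $d(p,q)\sim|x-y|$ on $K$, combining the two yields local semiconcavity of $\tilde u$ on $\psi(K)$. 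The reverse direction is completely analogous: start with a unit-speed geodesic $\gamma:[a,b]\to M$ contained in a chart neighborhood, compare with its affine image in the chart, and exchange the roles of the two inequalities; the Lipschitz constant of $u$ together with the same deviation estimate absorbs the error.

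The only genuinely delicate point is obtaining the factor $\lambda(1-\lambda)$ in the deviation estimate: a naive bound would only give an error of order $|x-y|^2$, which would be useless because it would not match the scale $\lambda(1-\lambda)|x-y|^2$ of the semiconcavity inequality. Once this is in hand, the rest is bookkeeping, and \cref{rem:local_semiconcavity} allows one to argue in a single chart around each point.
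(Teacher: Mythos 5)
Your proposal is correct and follows essentially the same route as the paper: the same decomposition of the semiconcavity defect into a geodesic term plus a Lipschitz-times-deviation term, hinging on the same key estimate $|\tilde\gamma(\lambda)-\sigma(\lambda)|\le C\lambda(1-\lambda)|x-y|^2$ (the paper's \cref{lemma:geodesic estimate}). The only difference is cosmetic: you obtain the $\lambda(1-\lambda)$ factor by double integration of $\ddot{(\tilde\gamma-\sigma)}$ with zero boundary data, whereas the paper Taylor-expands at both endpoints and splits the cases $\lambda\le\sfrac12$ and $\lambda\ge\sfrac12$.
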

We postpone the proof of this proposition to \cref{app:semiconcavity}. We next show that we can define the gradient of a semiconcave function at every point.


\begin{proposition}[The generalized gradient of a semiconcave function]\label{prop:generalized gradient}
	Let $u:M \to \R$ be a locally Lipschitz and semiconcave function. Then, at every point $p\in M$, $u$ admits a directional derivative $\partial^+_v u(p)$ in any direction $v\in T_pM\setminus\{0\}$. It is defined by
	\begin{equation*}
	\partial^+_v u(p) := \frac{\mathrm{d}}{\mathrm{d}t} [u(\gamma(t))]_{t=0}=\lim_{t\to0^+}\frac{u(\gamma(t))-u(p)}{t},
	\end{equation*}
	where $\gamma : [0,1] \to M$ is any curve such that $\gamma(0)=p$ and $\dot{\gamma}(0) = v$\,.
	Moreover, the map $v \mapsto \partial^+_v u(p)$ is $1$-homogeneous and concave on $T_pM$. Thus, it attains a unique maximum in the closed unit ball of $T_pM$ at a unique vector $v_p$.
\end{proposition}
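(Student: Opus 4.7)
The plan is to reduce the intrinsic problem on $M$ to a Euclidean one via a chart: by \cref{prop:definitions semiconcavity_intro}, the pull-back $\tilde u := u \circ \psi^{-1}$ is locally Lipschitz and locally semiconcave near $\psi(p)$, so it can be written locally as $\tilde u = c + q$ with $c$ concave and $q$ smooth (take $c(x) := \tilde u(x) - K|x|^2$ for $K$ large). All the claims then reduce to standard facts about one-sided derivatives of concave functions, transported back to $T_pM$ via $d\psi_p$.

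First I would establish existence and curve-independence of $\partial^+_v u(p)$. In the chart, the right-hand difference quotient $t \mapsto (\tilde u(t\tilde v) - \tilde u(0))/t$ splits into the concave part's quotient (non-increasing in $t$, hence convergent as $t \to 0^+$; finite thanks to the Lipschitz bound) and the smooth part's quotient (converging to $Dq(0)(\tilde v)$). Curve-independence then follows from Lipschitz continuity: any two curves $\gamma_1,\gamma_2$ with $\gamma_i(0)=p$ and $\dot\gamma_i(0)=v$ satisfy $|\psi(\gamma_1(t))-\psi(\gamma_2(t))| = o(t)$, hence $|u(\gamma_1(t))-u(\gamma_2(t))| = o(t)$, so the two limits agree.

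For the algebraic properties of $f(v):=\partial^+_v u(p)$: $1$-homogeneity is a reparametrization, replacing $t\mapsto\exp_p(tv)$ by $t\mapsto\exp_p(\lambda tv)$ for $\lambda>0$. For concavity, I would transfer to the chart and apply the concavity of $c$ to the scaled points $t\tilde v_1,t\tilde v_2$:
\begin{equation*}
\tilde u\big(t(\lambda\tilde v_1+(1-\lambda)\tilde v_2)\big) - \tilde u(0) \ge \lambda\big(c(t\tilde v_1)-c(0)\big) + (1-\lambda)\big(c(t\tilde v_2)-c(0)\big) + \big(q(t(\lambda\tilde v_1+(1-\lambda)\tilde v_2))-q(0)\big),
\end{equation*}
then divide by $t$ and let $t\to 0^+$; the smooth part contributes the linear map $\tilde v\mapsto Dq(0)(\tilde v)$, which preserves concavity when added back, yielding $f(\lambda v_1+(1-\lambda)v_2)\ge \lambda f(v_1)+(1-\lambda) f(v_2)$.

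Finally, for existence and uniqueness of the maximizer in $\overline{B_1}\subset T_pM$: the Lipschitz bound gives $|f(v)|\le L|v|$, so $f$ is concave and finite on the Euclidean space $T_pM$, hence continuous, and compactness yields a maximizer. For uniqueness, note $f(0)=0$ by $1$-homogeneity, so $M:=\max_{\overline{B_1}} f \ge 0$. If $M>0$, every maximizer lies on the unit sphere (otherwise $f(v/|v|)=f(v)/|v|>f(v)$ for $|v|<1$ and $f(v)>0$), and if two distinct unit vectors $v_1,v_2$ satisfied $f(v_1)=f(v_2)=M$, concavity would force $f((v_1+v_2)/2)\ge M$ while $|(v_1+v_2)/2|<1$, contradicting that maximizers must be on the sphere. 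The main obstacle I foresee is the careful transfer between the intrinsic curve-based definition and the chart-level analysis — in particular, the curve-independence step, which combines the first-order $o(t)$-agreement of curves with the same velocity and the Lipschitz bound. The degenerate case $M=0$ is handled by the convention $v_p=0$, consistent with the paper's definition $|\nabla u|(p):=\max\{0,\sup_v\partial^+_v u(p)\}$.
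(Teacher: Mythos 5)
Your proposal is correct and follows essentially the same route as the paper: reduce to $\R^n$ via \cref{prop:definitions semiconcavity_intro}, decompose $u$ into a concave function plus a smooth (quadratic) correction, use monotonicity of the difference quotients of the concave part for existence of $\partial^+_v u(p)$, and transfer concavity in $v$ from the concavity of that part. You additionally spell out several points the paper's proof leaves implicit --- curve-independence of the directional derivative, $1$-homogeneity, and the uniqueness of the maximizer on the closed unit ball (including the degenerate case where the maximum is $0$) --- all of which are handled correctly.
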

\begin{proof}
By Proposition \ref{prop:definitions semiconcavity_intro}, we can suppose that $M=\R^n$, $p=0$ and that $\gamma(t)=tv$. Then the function $w(x)=C|x|^2-u(x)$ is convex for $C$ large enough and so, the function
$t\mapsto \frac{w(\gamma(t))-w(0)}{t}$ is non-decreasing in $t$, so the limit $\partial^+_v u(p)=-\partial^+_v w(0)$ exists and is finite. The convexity of the function $v\mapsto \partial^+_v w(0)$ is a consequence from the convexity of $w$. The existence of a maximum of $v \mapsto \partial^+_v u(p)$ follows.
\end{proof}
If $\partial^+_{v_p} u(p)>0$, then the $1$-homogeneity implies that $v_p$ has norm one, and we define
	\begin{equation*}
\nabla u(p) := \partial^+_{v_p} u(p) v_p\qquad\text{and}\qquad |\nabla u(p)|=\partial^+_{v_p} u(p) .
\end{equation*}
If $\partial^+_{v_p} u(p)=0$, then we set $\nabla u(p)=0$. Thus, the norm of $\nabla u(p)$ is given by the following formula:
	\begin{equation}\label{e:def-gradient}
	\abs{\nabla u(p)} = \max\big\{0,\max\limits_{v\in T_pM, \abs{v}=1} \partial^+_{v} u(p)\big\}.
	\end{equation}

\subsection{Distance function, cut locus and cut points} Let $M$ be a compact Riemannian manifold, $b\in M$ and $d_b:M\to\R$ be the distance function to $b$.
Here we recall the definition and some of the main properties of the cut locus.

\begin{definition}[Cut points]
	Let $T>0$ and $\gamma : [0,T] \to M$ be a unit speed geodesic such that $\gamma(0) = b$, $t_0\in (0,T)$ and $p = \gamma(t_0)$. We say that $p$ is a \emph{cut point} of $b$ along $\gamma$ if $\gamma$ is length minimizing between $b$ and $p$, but not after $p$, \textit{i.e} $d_b(\gamma(t)) = t$ for $t\leq t_0$, and $d_b(\gamma(t)) < t$ for $t>t_0$.
\end{definition}
\begin{definition}[Cut locus]
	The \emph{\it cut locus} of $b$ in $M$, $\text{\rm Cut}_b(M)$, is defined as the set of all cut points of $b$.
\end{definition}
\noindent The following well-known facts about the cut locus can all be found in \cite[Chapter III, Section 4]{sakai1996Riemannian}:
\begin{itemize}
\item $\text{\rm Cut}_b(M)$ is the closure of the set of points $p$ in $M$, for which there are at least two minimizing geodesics connecting $b$ and  $p$;
\item the distance function $d_b$ is smooth outside $\text{\rm Cut}_b(M)\cup\{b\}$ and
$$|\nabla d_b|=1\quad\text{in}\quad M\setminus\Big(\text{\rm Cut}_b(M)\cup\{b\}\Big)\ ;$$
\item $d_b$ is differentiable at $p\in M$ if and only if there is a unique minimizing geodesics between $b$ and $p$;
\item in particular, $\text{\rm Cut}_b(M)\cup \{b\}$ is the closure of the set of points of non differentiability of $d_b$;
\item the exponential map $\exp_b:T_bM\to M$ is a diffeomorphism from an open set of $T_bM$ onto $M \setminus \text{\rm Cut}_b(M)$;
\item $\text{\rm Cut}_b(M)$ is a deformation retract of $M\setminus \{b\}$. In particular, these two sets have the same homotopy type, and so $\text{\rm Cut}_b(M)$ inherits much of the topology of $M$ (like homology groups, for instance). See \cite[Chapter III, Section 4, Proposition 4.5]{sakai1996Riemannian} for a precise statement.
\end{itemize}


We next recall that in \cite[Proposition 3.4]{mantegazza_mennucci_2003}, it was proved that, for any chart $\psi$  on $M\setminus\{b\}$, the function $d_b \circ \psi ^{-1}$ is locally semiconcave on $\rn$. Thus, by \cref{prop:definitions semiconcavity_intro}, $d_b$ is locally semiconcave on $M\setminus\{b\}$ in the sense of \cref{d:semiconcavity}. Precisely, we have the following proposition
\begin{proposition}[Semiconcavity of the distance function]\label{prop:db semiconcave}
	Let $M$ be a compact Riemannian manifold of dimension $n$ and $b\in M$ be a given point. Then, for every $\rho>0$, there is a constant $C>0$ such that the distance function $d_b$ is $C$-semiconcave on $M\setminus B_\rho(b)$.
\end{proposition}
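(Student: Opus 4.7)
The plan is to bootstrap from the (already known) local semiconcavity of $d_b$ on $M\setminus\{b\}$, using the compactness of $M\setminus B_\rho(b)$ to obtain a uniform semiconcavity constant, and then a standard local-to-global convexity argument to conclude.

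First I would invoke \cite[Proposition 3.4]{mantegazza_mennucci_2003}: in any chart $\psi$ defined on an open subset of $M\setminus\{b\}$, the pullback $d_b\circ\psi^{-1}$ is locally semiconcave in the Euclidean sense. Combined with \cref{prop:definitions semiconcavity_intro}, this gives local semiconcavity of $d_b$ in the geodesic sense of \cref{d:semiconcavity}: for every $p\in M\setminus\{b\}$ there exist a geodesic ball $B_{r_p}(p)\subset M\setminus\{b\}$ and a constant $C_p>0$ such that the inequality \eqref{e:semiconcavity} (with $C=C_p$) holds for every unit-speed geodesic $\gamma:[a,b]\to B_{r_p}(p)$.

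Next I would exploit compactness. The set $M\setminus B_\rho(b)$ is closed in the compact manifold $M$, hence compact, and it is contained in the open set $M\setminus\{b\}$. The open cover $\{B_{r_p/2}(p)\}_{p\in M\setminus B_\rho(b)}$ therefore admits a finite subcover $B_{r_1/2}(p_1),\dots,B_{r_N/2}(p_N)$, with associated semiconcavity constants $C_1,\dots,C_N$. Setting
\begin{equation*}
C:=\max_{1\le i\le N} C_i,
\end{equation*}
the function $d_b$ is $C$-semiconcave on each ball $B_{r_i}(p_i)$.

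To promote this to $C$-semiconcavity on the whole of $M\setminus B_\rho(b)$, I would use the elementary fact that a continuous function $f:[a,b]\to\R$ is convex on $[a,b]$ as soon as every interior point has a neighborhood on which $f$ is convex. Given any unit-speed geodesic $\gamma:[a,b]\to M\setminus B_\rho(b)$, each point $\gamma(t_0)$ lies in some $B_{r_i/2}(p_i)$; by continuity of $\gamma$, a sub-interval around $t_0$ is mapped entirely into $B_{r_i}(p_i)$, where the inequality \eqref{e:semiconcavity} with constant $C_i\le C$ applies. Hence $t\mapsto Ct^2 - d_b(\gamma(t))$ is continuous on $[a,b]$ and locally convex at every interior point, and so it is globally convex on $[a,b]$. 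This is exactly the $C$-semiconcavity of $d_b$ on $M\setminus B_\rho(b)$.

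The only real substance of the argument is imported from \cite{mantegazza_mennucci_2003}; the remaining work is a compactness/finite-cover step plus the local-to-global convexity principle. The mild obstacle is being careful that \cref{d:semiconcavity} refers to geodesics in $M$, so one needs the local inequality \eqref{e:semiconcavity} to hold for all geodesics that stay momentarily inside the chosen small geodesic ball, which is precisely what \cref{prop:definitions semiconcavity_intro} delivers.
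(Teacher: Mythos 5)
Your proof is correct and follows the paper's own route: the paper likewise derives this proposition from \cite[Proposition 3.4]{mantegazza_mennucci_2003} together with \cref{prop:definitions semiconcavity_intro}, and then passes from local to global semiconcavity via compactness (this last step is exactly the content of \cref{rem:local_semiconcavity}, which you have simply spelled out with the finite-subcover and local-to-global convexity argument).
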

In particular, by \cref{prop:generalized gradient}, for any point $p\in M\setminus \{b\}$ and any direction $v\in T_pM$, $d_b$ admits the directional derivative $\partial^+_{v} d_b(p)$ and so we can define $\nabla d_b$ and $|\nabla d_b|$ at every point as in \eqref{e:def-gradient}. In \cref{prop:gradient less than 1} we give a geometric interpretation of $\abs{\nabla d_b}(p)$ in terms of the geodesics connecting $p$ to $b$. We notice that similar results holds also in the more general framework of Alexandrov spaces, but with some additional restrictions on the curvature of the ambient space (see  \cite[Theorem 4.5.6]{adelsteinMorseTheoryUniform2017} and also \cite[Lemma 3.2]{adelsteinMorseTheoryUniform2017} for the statement  in the Riemannian context). We give the proof directly for the distance function to a compact subset $K$ of $M$.

\begin{lemma}[Geometric interpretation of the generalized gradient]\label{prop:gradient less than 1}
	Let $M$ be a smooth Riemannian manifold without boundary, $K$ a compact subset of $M$, and $d_K$ the distance function to $K$. Let $p$ be a point of $M$ such that there exist several minimizing geodesics from $p$ to $K$. We denote the set of unit speed geodesics from $p$ to $K$ that are minimizing between $p$ and $K$ by $\mathrm{geod}(p,K)$.
	For any $v \in T_pM$, we have
	\begin{equation}\label{e:geometric-main}
	\partial_v^+ d_K(p) = \min_{\gamma\in geod(p,K)} - \dot{\gamma}(0) \cdot v.
	\end{equation}
	In particular,
	\begin{equation}\label{e:geometric-cor}
	\abs{\nabla d_K}(p) = \max\{0,\max_{v\in T_pM, \abs{v} =1 }\min_{\gamma\in geod(p,K)} - \dot{\gamma}(0) \cdot v\}.
	\end{equation}
In particular, if $\gamma_1:[0,d_K(p)]\to M$ and $\gamma_2:[0,d_K(p)]\to M$	are two minimizing geodesics from $p$ to $K$, then
\begin{equation}\label{e:geometric-two-geodesics}
|\nabla d_K(p)|\le \sqrt{\frac{1+\dot\gamma_1(0)\cdot \dot\gamma_2(0)}2}.
\end{equation}
\end{lemma}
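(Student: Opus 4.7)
The plan is to establish \eqref{e:geometric-main} first, from which \eqref{e:geometric-cor} and \eqref{e:geometric-two-geodesics} follow by essentially formal manipulations. Throughout, let $c : (-\varepsilon, \varepsilon) \to M$ be any smooth curve with $c(0) = p$ and $\dot c(0) = v$, and note that $d_K$ is semiconcave in a neighborhood of $p$ (the proof of \cref{prop:db semiconcave} adapts verbatim to the distance from a compact set), so by \cref{prop:generalized gradient} the one-sided derivative $\partial_v^+ d_K(p)$ exists as a genuine limit. For the upper bound $\partial_v^+ d_K(p) \le \min_{\gamma \in \mathrm{geod}(p,K)} -\dot\gamma(0) \cdot v$, I would fix any $\gamma \in \mathrm{geod}(p,K)$ with endpoint $q \in K$. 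The pointwise inequality $d_K \le d(\cdot, q)$, with equality at $p$, yields $\partial_v^+ d_K(p) \le \partial_v^+ d(\cdot, q)(p)$, and the classical first variation formula applied to $\gamma$ (with a variation fixing $q$ and moving $p$ in direction $v$) gives $\partial_v^+ d(\cdot, q)(p) \le -\dot\gamma(0) \cdot v$.

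The harder direction is the reverse inequality, which I would establish by a geodesic-limit argument. Choose $t_k \to 0^+$ realizing the limit defining $\partial_v^+ d_K(p)$, pick a projection $q_k \in K$ of $c(t_k)$, and let $\beta_k : [0, f(t_k)] \to M$ denote a unit speed minimizing geodesic from $c(t_k)$ to $q_k$, where $f(t) := d_K(c(t))$. Since $q_k \in K$ (compact) and $\dot\beta_k(0)$ lies in the unit tangent bundle over a compact set, I extract a subsequence along which $q_k \to q \in K$ and $(c(t_k), \dot\beta_k(0)) \to (p, \dot\gamma(0))$ in $TM$, where $\gamma$ is a unit speed minimizing geodesic from $p$ to $q$, so $\gamma \in \mathrm{geod}(p, K)$. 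Now let $v_k \in T_{c(t_k)}M$ be the parallel transport of $v$ along $c$ and let $V_k(s)$ be the parallel transport of $v_k$ along $\beta_k$; consider the variation $\Phi_k(s, \tau) := \exp_{\beta_k(s)}\bigl(-\tau(1 - s/f(t_k)) V_k(s)\bigr)$, which satisfies $\Phi_k(\cdot, 0) = \beta_k$, $\Phi_k(f(t_k), \tau) = q_k$, and $\Phi_k(0, t_k) = \exp_{c(t_k)}(-t_k v_k)$, the latter being within $O(t_k^2)$ of $p$. Since $\beta_k$ is a geodesic, the first variation formula gives
\begin{equation*}
L(\Phi_k(\cdot, \tau)) = f(t_k) + \tau \, \dot\beta_k(0) \cdot v_k + O(\tau^2),
\end{equation*}
with the $O(\tau^2)$ uniform in $k$ by compactness of $M$ and boundedness of its curvature (and of $1/f(t_k)$, since $f(t_k) \to d_K(p) > 0$). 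Taking $\tau = t_k$, a triangle inequality together with $d(p, q_k) \ge d_K(p)$ gives
\begin{equation*}
f(t_k) \ge d_K(p) - t_k \, \dot\beta_k(0) \cdot v_k - O(t_k^2),
\end{equation*}
and letting $k \to \infty$ along the subsequence yields $\partial_v^+ d_K(p) \ge -\dot\gamma(0) \cdot v \ge \min_{\gamma' \in \mathrm{geod}(p,K)} -\dot{\gamma'}(0) \cdot v$.

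Once \eqref{e:geometric-main} is in hand, \eqref{e:geometric-cor} is immediate from the definition \eqref{e:def-gradient} of $\abs{\nabla d_K}$. For \eqref{e:geometric-two-geodesics}, given any unit $v \in T_pM$,
\begin{equation*}
\min_{\gamma \in \mathrm{geod}(p,K)} -\dot\gamma(0) \cdot v \le -\tfrac{1}{2}(\dot\gamma_1(0) + \dot\gamma_2(0)) \cdot v \le \tfrac{1}{2} \abs{\dot\gamma_1(0) + \dot\gamma_2(0)} = \sqrt{\tfrac{1+\dot\gamma_1(0) \cdot \dot\gamma_2(0)}{2}},
\end{equation*}
by Cauchy--Schwarz and the identity $\abs{\dot\gamma_1(0) + \dot\gamma_2(0)}^2 = 2(1 + \dot\gamma_1(0) \cdot \dot\gamma_2(0))$; taking the supremum over unit $v$ and combining with the $\max\{0, \cdot\}$ in \eqref{e:geometric-cor} gives the claim. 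The main technical obstacle is the lower bound in \eqref{e:geometric-main}: a naive path from $p$ to $q_k$ that goes via $c(t_k)$ only yields the trivial estimate $\partial_v^+ d_K(p) \ge -\abs{v}$, and capturing the sharp first-order behavior requires exploiting the geodesic nature of $\beta_k$ through the first variation formula as sketched above.
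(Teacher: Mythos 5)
Your proof is correct, and it follows the same overall architecture as the paper's: \eqref{e:geometric-main} is split into two inequalities, and the lower bound is obtained by extracting a convergent subsequence of minimizing geodesics emanating from the nearby points and identifying the limit as an element of $\mathrm{geod}(p,K)$. The technical implementation, however, differs in both halves. For the upper bound, the paper avoids the first variation formula by introducing the midpoint $a=\gamma(d_K(p)/2)$ and exploiting that $d_a$ is smooth near $p$ with $\nabla d_a(p)=-\dot\gamma(0)$, whereas you apply the first variation of arc length to $d(\cdot,q)$ directly; both are valid. For the lower bound, the paper again hides the variational computation inside the smoothness of the two-point distance $d(\cdot\,,\cdot)$ on a fixed product neighborhood of $(\gamma(R),p)$, Taylor-expanding $d(b_n,\exp_p(t_nv))$ in the second variable, while you construct an explicit variation $\Phi_k$ of $\beta_k$ and invoke a second-order remainder that is uniform in $k$. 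Your route puts the burden on justifying that uniformity (which you do, via the bound on $1/f(t_k)$ and curvature bounds on the precompact region containing all the $\beta_k$), while the paper's route outsources it to the smoothness of $d$ away from the cut locus; the two are essentially equivalent in content. The genuinely different (and cleaner) part is your proof of \eqref{e:geometric-two-geodesics}: bounding the minimum by the average $-\tfrac12(\dot\gamma_1(0)+\dot\gamma_2(0))\cdot v$ and applying Cauchy--Schwarz replaces the paper's decomposition $v=-\alpha\dot\gamma_1(0)-\beta\dot\gamma_2(0)+v_\perp$ and the ensuing algebra on $(\alpha+\beta)^2$, reaching the same sharp constant in two lines.
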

\begin{proof}
	Let $\gamma : [0,d_K(p)]\to M$ be a geodesic of $geod(p,K)$. Let $a = \gamma(d_K(p)/2)$. As $\gamma$ is minimizing between $p$ and $\gamma(d_K(p))$, we have $a\notin Cut_p(M)$, and so $p\notin Cut_a(M)$. In particular, the function $d_a$ is differentiable at $p$, and $\nabla d_a(p) = - \dot{\gamma}(0)$. Thus, for every $t>0$, we have
	\begin{align*}
	\frac{d_K(\exp_p(tv)) - d_K(p)}{t}
	& \leq \frac{d_a(\exp_p(tv)) +d_K(a) - d_K(p)}{t}  = \frac{d_a(\exp_p(tv)) - d_a(p)}{t} 
	\end{align*}
Passing to the limit as $t\to0$, we get
	\begin{equation}\label{eq:upper bound directional derivative}
	\partial_v^+ d_K(p) \leq \min_{\gamma\in geod(p,K)} - \dot{\gamma}(0) \cdot v.
	\end{equation}
	Now, for every $t>0$, let $\gamma_t \in geod(\exp_p(tv),K)$. For $t$ small enough, the length of $\gamma_t$ is bounded by $d_K(p) + 1$. By compactness of the set of geodesics of length bounded by a given constant, there exists a sequence of positive numbers $(t_n)_{n\geq 0}$ that converges to $0$, such that $\gamma_n := \gamma_{t_n}$ converges to a unit speed geodesic $\gamma$ as $n\to +\infty$. As $K$ is closed, $\gamma$ is a geodesic from $p$ to $K$. What is more, we have
	\begin{equation*}
	\mathrm{length}(\gamma) = \lim_{n\to\infty}\mathrm{length}(\gamma_n) = \lim_{n\to\infty}d_K(\exp_p(t_n v)) = d_K(p),
	\end{equation*}
	so $\gamma\in geod(p,K)$. Let $R = \min\{\mathrm{inj}(M),d_K(p)/2\}$, where $\mathrm{inj}(M)$ is the injectivity radius of $M$. In particular for any $(x,y)$ such that $d(x,y)<R$ and $x \neq y$, the distance function $d(\,\cdot\,,\,\cdot\,)$ is smooth in a neighborhood of $(x,y)$ in $M \times M$. For $n \in \N$, let $b_n := \gamma_n(R)$, and $b_\infty = \gamma(R)$.
	Let ${U},{V}\subset M$ be precompact neighborhoods of $p$ and $b_\infty$ respectively such that $d(\,\cdot\,,\,\cdot\,)$ is smooth on $\overline{U}\times \overline{V}$.
	For $n$ big enough, we have $\exp_p(t_nv) \in U$ and $b_n \in V$, and so
	\begin{align}
	d_K(p)
	& \leq d_K(b_n) + d(b_n,p) \nonumber \\
	& = d_K(\exp_p(t_n v)) - d(b_n,\exp_p(t_n v)) + d(b_n,p) \nonumber \\
	& = d_K(\exp_p(t_n v)) - \nabla_2 d(b_n,p)\cdot v + o(t_n), \label{eq:nabla2}
	\end{align}
	where $\nabla_2$ is the gradient with respect to the second variable.
	We have
	\begin{equation*}
	\nabla_2 d(b_n,p)\tend{n}{\infty}\nabla_2 d(b_\infty,p) = - \dot{\gamma}(0)
	\end{equation*}
	because $d(\,\cdot\,,\,\cdot\,)$ is smooth on ${U}\times{V}$. So \eqref{eq:nabla2} yields
	\begin{equation*}
	\liminf_{n\to \infty}\frac{d_K(\exp_p(t_n v)) - d_K(p)}{t_n} \geq - \dot{\gamma}(0) \cdot v.
	\end{equation*}
	In particular,
	\begin{equation*}
	\partial_v^+ d_K(p) \geq \min_{\gamma\in geod(p,K)} - \dot{\gamma}(0) \cdot v.
	\end{equation*}
	With \eqref{eq:upper bound directional derivative}, this concludes the proof of \eqref{e:geometric-main}. Now, \eqref{e:geometric-cor} follows from \eqref{e:geometric-main} and the definition of the generalized gradient \eqref{e:def-gradient} of semiconcave functions. Finally, in order to prove \eqref{e:geometric-two-geodesics}, we consider the vector $v$ that realizes the maximum in \eqref{e:geometric-cor} and we write it as $v=-\alpha\dot\gamma_1(0)-\beta\dot\gamma_2(0)+v_\perp$, where $v_\perp$ is orthogonal to $\dot\gamma_1(0)$ and $\dot\gamma_2(0)$. Then, we have
	$$-v\cdot \dot\gamma_1(0)=\alpha+\beta \dot\gamma_1(0)\cdot \dot\gamma_2(0)\qquad\text{and}\qquad
-v\cdot \dot\gamma_2(0)=\beta+\alpha \dot\gamma_1(0)\cdot \dot\gamma_2(0).$$
In particular,
\begin{equation}\label{e:ghjk}
\min_{\gamma\in geod(p,K)} - \dot{\gamma}(0) \cdot v\le \frac12\Big(-v\cdot \dot\gamma_1(0)-v\cdot \dot\gamma_2(0)\Big)\le \frac12(\alpha+\beta)\Big(1+\dot\gamma_1(0)\cdot \dot\gamma_2(0)\Big).
\end{equation}
Now, using the fact that
$$\alpha^2+\beta^2+2\alpha\beta\dot\gamma_1(0)\cdot \dot\gamma_2(0)\le \|v\|^2=1,$$
we get that
$$(\alpha+\beta)^2\le 1+2\alpha\beta\Big(1-\dot\gamma_1(0)\cdot \dot\gamma_2(0)\Big)\le 1+\frac12(\alpha+\beta)^2\Big(1-\dot\gamma_1(0)\cdot \dot\gamma_2(0)\Big),$$
which implies that
$$(\alpha+\beta)^2\le \frac2{1+\dot\gamma_1(0)\cdot \dot\gamma_2(0)},$$
which, together with \eqref{e:ghjk}, gives \eqref{e:geometric-two-geodesics}.
\end{proof}

As a consequence of \cref{prop:gradient less than 1} and in particular of \eqref{e:geometric-two-geodesics}, we obtain the $\lambda$-cut locus approximates the cut locus in the following sense.

\begin{proposition}\label{prop:approximation}
Suppose that $M$ is a compact Riemannian manifold, the point $b\in M$ is fixed and that $d_b$ is the distance function to $b$. Then, for every $\lambda>0$, $\text{\rm Cut}_b^\lambda(M)\subset \text{\rm Cut}_b(M)$. Moreover, the cut locus $\text{\rm Cut}_b(M)$ is the closure of the union
$\displaystyle \bigcup_{\lambda>0}\text{\rm Cut}_b^\lambda(M)$.
\end{proposition}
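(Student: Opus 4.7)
The plan is to prove the two inclusions separately, using the characterization of $\text{\rm Cut}_b(M)$ as the closure of the set of points with at least two minimizing geodesics from $b$, together with the geometric bound \eqref{e:geometric-two-geodesics} from \cref{prop:gradient less than 1}.

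For the first claim $\text{\rm Cut}_b^\lambda(M)\subset \text{\rm Cut}_b(M)$, I would argue by contradiction. Suppose $p\in \text{\rm Cut}_b^\lambda(M)$ but $p\notin \text{\rm Cut}_b(M)$. Since by definition $p\neq b$, the listed properties of the cut locus imply that $d_b$ is smooth in a neighborhood of $p$ with $|\nabla d_b|(p)=1$. However, membership in $\text{\rm Cut}_b^\lambda(M)$ forces
$$|\nabla d_b|^2(p)\le 1-\frac{\lambda^2}{d_b^2(p)}<1,$$
which is a contradiction. Hence $\text{\rm Cut}_b^\lambda(M)\subset \text{\rm Cut}_b(M)$, and since $\text{\rm Cut}_b(M)$ is closed, taking closures gives $\overline{\bigcup_{\lambda>0}\text{\rm Cut}_b^\lambda(M)}\subset \text{\rm Cut}_b(M)$.

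For the reverse inclusion, I would exploit the fact (recalled in the excerpt) that $\text{\rm Cut}_b(M)$ is the closure of the set $A$ of points $p\in M$ at which at least two distinct minimizing geodesics from $b$ reach $p$. It therefore suffices to show $A\subset \bigcup_{\lambda>0}\text{\rm Cut}_b^\lambda(M)$. Fix such a $p\in A$ and let $\gamma_1,\gamma_2:[0,d_b(p)]\to M$ be two distinct unit-speed minimizing geodesics running from $p$ to $b$. Since $\gamma_1\neq \gamma_2$ and both are geodesics starting from $p$, their initial velocities $\dot\gamma_1(0),\dot\gamma_2(0)\in \mathbb S^{n-1}(T_pM)$ are distinct unit vectors, so that $\dot\gamma_1(0)\cdot \dot\gamma_2(0)<1$. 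Applying \eqref{e:geometric-two-geodesics} (with $K=\{b\}$) yields
$$|\nabla d_b|^2(p)\le \frac{1+\dot\gamma_1(0)\cdot \dot\gamma_2(0)}{2}<1.$$
Consequently, setting $\lambda_p:=d_b(p)\sqrt{1-|\nabla d_b|^2(p)}>0$, we obtain $p\in \text{\rm Cut}_b^{\lambda_p}(M)\subset \bigcup_{\lambda>0}\text{\rm Cut}_b^\lambda(M)$. Taking closures, $\text{\rm Cut}_b(M)=\overline{A}\subset \overline{\bigcup_{\lambda>0}\text{\rm Cut}_b^\lambda(M)}$, and combined with the previous inclusion this gives the equality.

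There is no serious obstacle here: the key input is the two-geodesic estimate \eqref{e:geometric-two-geodesics}, which does all the work. The only minor point to be careful about is that distinct minimizing geodesics from a common endpoint really do have distinct initial velocities (else uniqueness of geodesics with prescribed initial data would force them to coincide), so that the inner product $\dot\gamma_1(0)\cdot\dot\gamma_2(0)$ is strictly less than $1$; this is what gives the crucial strict inequality $|\nabla d_b|(p)<1$ and thus a positive $\lambda_p$.
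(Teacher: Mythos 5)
Your proof is correct and follows essentially the same route as the paper: the first inclusion via the smoothness and eikonal property of $d_b$ off the cut locus, and the reverse inclusion by applying the two-geodesic bound \eqref{e:geometric-two-geodesics} to the dense set of points with two distinct minimizing geodesics. Your explicit remark that distinct minimizing geodesics from $p$ have distinct initial velocities (so the inner product is strictly below $1$ and $\lambda_p>0$) is a detail the paper leaves implicit, and is a welcome addition.
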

\begin{proof}
The inclusion $\text{\rm Cut}_b^\lambda(M)\subset \text{\rm Cut}_b(M)$ follows from the fact that $d_b$ is differentiable and $|\nabla d_b|=1$ outside $ \text{\rm Cut}_b(M)\cup\{b\}$. In order to prove the second claim, we fix a point $p\in \text{\rm Cut}_b(M)$. Then, there is a sequence of points $p_n\in \text{\rm Cut}_b(M)$ for each of which there are at least to different minimizing geodesics from $p_n$ to $b$. Now, from  \eqref{e:geometric-two-geodesics}, we have that $p_n\in  \text{\rm Cut}_b^{\lambda_n}(M)$ for some $\lambda_n>0$. This concludes the proof.
\end{proof}

\section{Regularity of $u_m^d$}\label{sec:regularity}
This section is dedicated to the $C^{1,1}$ regularity of the minimizer $u_m^d$ of \eqref{eq:distance constraint}.
We recall the following result.
\begin{lemma}[Regularization of the obstacle, \cite{generau2019laplacian}]\label{lemma:regularized distance}
  For any $m>0$, there exists a function $\widetilde{d_b}$ which is smooth on $M\setminus\{b\}$, such that
  $$u_m^d \leq \widetilde{d_b} \leq d_b\quad\text{on}\quad M,\qquad\text{and}\qquad \widetilde{d_b}<d_b\quad\text{on}\quad\text{\rm Cut}_b(M).$$
  In particular, $u_m^d$ is also the solution of the obstacle problem
  \begin{equation}
    \min\left\{\int_{M}  \abs{\nabla u}^2 - mu\ :\ u\in H^1(M),\ u\leq \widetilde{d_b}\right\}.
  \end{equation}
\end{lemma}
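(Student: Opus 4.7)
The plan is to produce $\widetilde{d_b}$ by interpolating between $d_b$ (where it is already smooth) and a smooth approximation from below (near the cut locus), exploiting the fact that $u_m^d$ lies strictly below $d_b$ on $\mathrm{Cut}_b(M)$; this strict separation opens a uniform positive gap into which $\widetilde{d_b}$ can be inserted.

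\textbf{Step 1 (Separation on the cut locus).} The first task is to show that $u_m^d < d_b$ strictly on $\mathrm{Cut}_b(M)$. For this, I would use the $C^1$ regularity of $u_m^d$ on $M\setminus\{b\}$ (a standard consequence of obstacle problem theory for the Lipschitz, semiconcave obstacle $d_b$) together with \cref{prop:gradient less than 1}. At a cut point $p$ with two distinct minimizing geodesics $\gamma_1,\gamma_2$, if $u_m^d(p)=d_b(p)$ then the obstacle inequality $u_m^d\le d_b$ with equality at $p$ forces $\nabla u_m^d(p)\cdot v \le \partial_v^+ d_b(p)=\min_i(-\dot\gamma_i(0)\cdot v)$ for every $v\in T_pM$. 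Applying this with $v=\dot\gamma_1(0)-\dot\gamma_2(0)$ and with $-v$ simultaneously yields $\nabla u_m^d(p)\cdot v\le \dot\gamma_1(0)\cdot\dot\gamma_2(0)-1<0$ and $\nabla u_m^d(p)\cdot v\ge 1-\dot\gamma_1(0)\cdot\dot\gamma_2(0)>0$, a contradiction. Purely conjugate cut points (if any) are handled using the interior PDE $-\Delta u_m^d = m/2$ on the open non-contact set together with density of branching cut points in $\mathrm{Cut}_b(M)$. By continuity of $u_m^d$ and $d_b$ and compactness, one then obtains a closed neighborhood $V_1$ of $\mathrm{Cut}_b(M)$ in $M\setminus\{b\}$ and a constant $\delta>0$ such that $d_b-u_m^d\ge 2\delta$ on $V_1$.

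\textbf{Step 2 (Interpolation).} Choose an open $V_0$ with $\mathrm{Cut}_b(M)\subset V_0$ and $\overline{V_0}\subset V_1$, arranged so that $\overline{V_1}\setminus V_0$ is disjoint from $\mathrm{Cut}_b(M)$. Take a smooth cutoff $\eta$ on $M\setminus\{b\}$ equal to $1$ on $V_0$ and supported in $V_1$. By mollifying $d_b$ uniformly in local charts and subtracting a small constant, build a smooth function $\phi$ on $M\setminus\{b\}$ with $\phi<d_b$ everywhere and $\phi\ge d_b-\delta$ uniformly. Set
$$\widetilde{d_b}:=\eta\,\phi + (1-\eta)\,d_b.$$
Then $\widetilde{d_b}$ is smooth on $M\setminus\{b\}$: it coincides with $\phi$ on $V_0$, with $d_b$ (which is smooth outside $\mathrm{Cut}_b(M)\cup\{b\}$) on $M\setminus V_1$, and is a convex combination of smooth functions on the transition shell $V_1\setminus V_0$. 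The three required pointwise inequalities follow at once: $\widetilde{d_b}\le d_b$ since $\phi\le d_b$; $\widetilde{d_b}<d_b$ on $\mathrm{Cut}_b(M)$ because $\eta=1$ and $\phi<d_b$ there; and $\widetilde{d_b}\ge u_m^d$ because on $V_1$ one has $\phi\ge d_b-\delta\ge u_m^d+\delta$ while outside $V_1$ one trivially has $\widetilde{d_b}=d_b\ge u_m^d$.

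\textbf{Step 3 (Equivalent obstacle problem) and main obstacle.} The ``In particular'' claim follows by direct comparison: every competitor $u\le\widetilde{d_b}$ is admissible for the original problem (since $\widetilde{d_b}\le d_b$), so the new minimum is at least the original; and $u_m^d\le\widetilde{d_b}$ makes $u_m^d$ admissible for the new problem, so the new minimum is at most the original. Strict convexity of the functional together with convexity of both admissible classes then identifies $u_m^d$ as the unique minimizer of the new problem. The delicate step in the whole argument is Step 1, which bridges the variational $C^1$ regularity of $u_m^d$ with the singularity geometry of $d_b$ at branching cut points; Step 2 is then a routine partition-of-unity construction and Step 3 is a two-line comparison.
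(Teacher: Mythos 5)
The paper does not prove this lemma at all: it is imported wholesale from \cite{generau2019laplacian}, so your attempt has to be judged on its own. Your Steps 2 and 3 (the partition-of-unity interpolation into a uniform gap, and the two-sided comparison of minimizers) are correct and routine. The entire mathematical content sits in Step 1, and there you have two genuine gaps. First, you invoke the $C^1$ regularity of $u_m^d$ on $M\setminus\{b\}$ as ``a standard consequence of obstacle problem theory for the Lipschitz, semiconcave obstacle $d_b$.'' It is not: standard obstacle-problem regularity gives differentiability of the solution where the obstacle is $C^1$, and $d_b$ fails to be differentiable precisely on (a dense subset of) $\text{\rm Cut}_b(M)$ --- exactly the points where you need $\nabla u_m^d(p)$ to exist to run your contradiction. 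A priori the solution could touch the obstacle at a corner and inherit the corner. In this paper the $C^{1,1}$ regularity of $u_m^d$ (\cref{prop:regularity}) is \emph{derived from} the present lemma via \cref{lemma:regularity}, so within the paper's logic your argument is circular. The branching-point case can be repaired without any regularity of $u_m^d$: from $u_m^d\le d_b$ with equality at $p$ and the expansion of \cref{prop:gradient less than 1} one gets, for every direction $w$, the second-difference bound $u_m^d(\exp_p(tw))+u_m^d(\exp_p(-tw))-2u_m^d(p)\le -t\,\abs{(\dot\gamma_1(0)-\dot\gamma_2(0))\cdot w}+O(t^2)$, whose spherical average contradicts the sub-mean-value inequality that follows from the distributional bound $\Delta u_m^d\ge -2m$ (which comes directly from one-sided variations $u_m^d+t\phi$, $\phi\le0$). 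That is the argument you should be making; as written, yours presupposes what is to be proved.

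The second gap is the conjugate-only cut points, which you dismiss in one sentence via ``the interior PDE together with density of branching cut points.'' This does not work: knowing $u_m^d<d_b$ on a dense subset of $\text{\rm Cut}_b(M)$ gives neither strict inequality at the remaining points nor the uniform gap $d_b-u_m^d\ge2\delta$ on a neighborhood that Step 2 requires (strict inequalities do not pass to limit points, and the open set $E_m^d$ need not contain the closure of the branching set). At a cut point with a unique minimizing geodesic $d_b$ is differentiable, there is no corner, and the kink argument above gives nothing; what saves the day is the blow-up of $\Delta d_b$ at first conjugate points (the negative part of the distributional Laplacian of $d_b$ becomes incompatible with $\Delta u_m^d\ge-2m$ if the two functions were to touch there). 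Making that quantitative is precisely the content of \cite{generau2019laplacian}, and it is the missing idea in your proof.
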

One could adapt to the manifold framework the regularity theorems for the classical obstacle problem on a euclidean domain and, with the preceding lemma, deduce the regularity of $u_m^d$. Rather than doing that, we will use \cref{lemma:regularized distance} to reduce our problem to a classical obstacle problem on a euclidean domain.
Let us start with the following regularity lemma.

\begin{lemma}[Continuity of $u_d^m$]\label{lemma:regularity}
  For any $m>0$, the function $u_m^d$ is continuous on $M$.
\end{lemma}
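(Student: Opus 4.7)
The plan is first to establish continuity on $M\setminus\{b\}$ by reducing to a classical Euclidean obstacle problem in charts (using \cref{lemma:regularized distance}), and then to handle the isolated point $b$ by a sandwich bound.

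\emph{Interior continuity.} By \cref{lemma:regularized distance}, $u_m^d$ coincides with the minimizer of the obstacle problem with obstacle $\widetilde{d_b}$, which is $C^\infty$ on $M\setminus\{b\}$. I would cover $M\setminus\{b\}$ by relatively compact coordinate charts $\psi:U\to V\subset\R^n$ with $\overline{U}\subset M\setminus\{b\}$. In each chart the minimization problem for $u_m^d\circ\psi^{-1}$ becomes a standard Euclidean obstacle problem with smooth uniformly elliptic coefficients (inherited from $g^{ij}$ and the volume form $\sqrt{\det g}$), bounded linear lower-order term $-mu$, and smooth obstacle $\widetilde{d_b}\circ\psi^{-1}$. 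Classical regularity theory for obstacle problems with smooth obstacles (Kinderlehrer-Stampacchia, Frehse) yields local continuity — in fact local $C^{1,1}$ — of the minimizer in each chart, so gluing over the chart cover produces a continuous representative of $u_m^d$ on $M\setminus\{b\}$.

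\emph{Continuity at $b$.} I would first prove the pointwise bound $0\le u_m^d\le d_b$ on $M$. The upper bound is the constraint. For the lower bound, the truncation $\max(u_m^d,0)$ is still admissible since $d_b\ge 0$, and by the Sobolev chain rule $|\nabla\max(u_m^d,0)|\le|\nabla u_m^d|$ a.e., while the trivial inequality $\max(t,0)\ge t$ gives $-m\max(u_m^d,0)\le -mu_m^d$. Hence the truncation is a competitor with no greater energy, and strict convexity of the functional forces $u_m^d=\max(u_m^d,0)\ge 0$. Now $d_b(b)=0$ together with the sandwich $0\le u_m^d\le d_b$ and the continuity of $d_b$ at $b$ forces $u_m^d(p)\to 0$ as $p\to b$ along $M\setminus\{b\}$; setting $u_m^d(b)=0$ extends the continuous representative continuously to all of $M$.

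The only genuine technical point is the isolated singularity of $\widetilde{d_b}$ at $b$, which prevents a direct one-step application of standard obstacle-problem regularity on all of $M$; the sandwich argument of the second step disposes of this point without any additional analysis.
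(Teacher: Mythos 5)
Your argument is correct, but it follows a genuinely different route from the paper's. The paper works with a \emph{single} chart: it builds a smooth open set $\Omega$ containing $b$ whose closure avoids $\text{\rm Cut}_b(M)$, pulls the problem back through the exponential map, subtracts a smooth extension of the boundary values to reduce to zero boundary data, and then applies the $L^p$ Lewy--Stampacchia-type estimate of Troianiello with the \emph{singular} obstacle $d_b$ itself; the whole point of \cref{lemma:laplacian distance} is that the conical singularity of $d_b$ at $b$ contributes $\Delta d_b \sim (n-1)/d_b \in L^p$ only for $p<n$, which still yields $W^{2,p}$ and hence continuity by Sobolev embedding. You instead avoid analyzing the singularity of the obstacle altogether: away from $b$ you use the regularized obstacle $\widetilde{d_b}$ of \cref{lemma:regularized distance} together with local interior regularity for obstacle problems with smooth obstacles (here one should note that the global minimizer restricted to a chart coincides, by uniqueness, with the solution of the local obstacle problem with its own trace as boundary datum, so interior regularity theory applies), and at $b$ you use the elementary sandwich $0\le u_m^d\le d_b$, whose lower bound is exactly the truncation argument the paper itself uses later in the proof of \cref{lemma:u=d around b}. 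Your route is more elementary at the point $b$ and dispenses with \cref{lemma:laplacian distance} for this lemma; the paper's route is heavier but gives the quantitative $W^{2,p}$ information across the singularity and sets up the machinery (the chart $\Omega$, the auxiliary function $v_m$, the operator $A$) that is reused verbatim in \cref{prop:regularity} and \cref{p:bound-on-D2-near-the-boundary}. Both proofs are valid; only the slightly loose appeal to ``strict convexity'' in your lower-bound step deserves a word (the functional is strictly convex only modulo constants, but uniqueness of the minimizer, or the strict decrease of the linear term on $\{u_m^d<0\}$, closes this immediately).
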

\begin{proof}
  We will reduce our problem to a classical obstacle type variational problem on an open subset of $\rn$, by a series of elementary modifications, and apply a classical $W^{2,p}$ regularity theorem.

  From \cref{lemma:regularized distance}, we know that there exists an open set $U\subset M$ and $\varepsilon>0$ such that
  $$\text{\rm Cut}_b(M)\subset U\qquad\text{and}\qquad u_m^d\leq d_b - \varepsilon\quad \text{on}\quad U.$$
  As a consequence, on the set $U$, $u_m^d$ verifies the Euler-Lagrange equation of \eqref{eq:distance constraint}, \textit{i.e} $\Delta u_m^d = -2m$. In particular, it is $C^\infty$ smooth on $U$.
  Let $\Omega\subset M$ be a smooth open set such that
  \begin{equation*}
    U^c \subset \Omega, \quad \partial \Omega \subset U \quad \text{and} \quad \text{\rm Cut}_b(M)\cap \overline{\Omega} = \emptyset.
  \end{equation*}
  As $U^c \subset \Omega$, it suffices to show that $u_m^d$ is continuous on $\Omega$.
  As $\partial \Omega \subset U$, $u_m^d$ is smooth on $\partial \Omega$, so there exists a smooth function $v_m$ on $\overline{\Omega}$ such that $v_m = u_m^d$ on $\partial \Omega$.
  Then, one can check that $u_m^d$ is a solution of the following variational problem:
  \begin{equation*}
    \min\left\{\int_{\Omega}  \abs{\nabla u}^2 - mu\ :\ u\in H^1(\Omega),\ u\leq d_b\ \text{in}\ \Omega,\ u = {v_m}\ \text{on}\ \partial\Omega\right\}.
  \end{equation*}
  As a consequence, $u_m^d-v_m$ is a solution of the following variational problem:
  \begin{equation}
    \label{eq:variational problem on omega}
    \min\left\{\int_{\Omega}  \abs{\nabla v}^2 -(m+\Delta v_m)v\ :\ v\in H^1_0(\Omega),\ v\leq d_b-v_m \ \text{in}\ \Omega\right\}.
  \end{equation}
  Because we have $\text{\rm Cut}_b(M)\cap \overline{\Omega} = \emptyset$, the exponential map at $b$ is a diffeomorphism onto $\Omega$. Let $\phi:\Omega\to \widetilde{\Omega}\subset \rn$ be a normal coordinates chart centered at $b$. Let $g = (g^{ij})$ denotes the metric of $M$ in the coordinates defined by $\phi$, and $\det g$ its determinant. We recall that the Riemannian volume measure is given in coordinates by $\sqrt{\det g}\,\mathrm{d}x$. So we have
  \begin{multline*}
    \int_{\Omega} \left(\abs{\nabla v}^2 -(m+\Delta v_m)v\right)
    = \int_{\widetilde{\Omega}} \left( g^{ij}\partial_i (v \circ \phi^{-1})\partial_j (v \circ \phi^{-1})\sqrt{\det g} -  \big((m+\Delta v_m)\circ \phi^{-1}\big) \big(v\circ \phi^{-1}\big) \sqrt{\det g}\right)\,dx,
  \end{multline*}
  so $(u_m^d-v_m)\circ \phi^{-1}$ is a minimizer of
  \begin{equation}\label{eq:variational problem on omega tilde}
    \min\left\{\int_{\widetilde{\Omega}} \Big(g^{ij}\sqrt{\det g}\,\partial_i w\,\partial_j w - F w\big)\,dx\ :\ w\in H^1_0(\widetilde{\Omega}),\ w\leq \psi \right\},
  \end{equation}
  where we have set $\psi := (d_b-v_m)\circ \phi^{-1}$ and $F := (m+\Delta v_m)\circ \phi^{-1}\sqrt{\det g}$. We want to apply \cite[Theorem 4.32]{troianiello2013obstacle}.
  For this we need to write the above variational problem into a variational inequality.
  Let $w$ be a competitor in \eqref{eq:variational problem on omega tilde}.
  Writing down the minimality of $w_m := (u_m^d-v_m)\circ \phi^{-1}$ against the competitor $w_m + t(w-w_m)$, for $t\in (0,1)$ small, we find that
  \begin{equation*}
    \langle A w_m, w_m-w \rangle \geq \langle F,w_m-w \rangle,
  \end{equation*}
  where $A$ is the elliptic operator defined on $H^1_0(\widetilde{\Omega})$ by $Aw := -\partial_j(g^{ij}\sqrt{\det g}\,\partial_i w)$.
  From there, we can apply \cite[Theorem 4.32]{troianiello2013obstacle} to deduce that, for any $p<n$, if $A \psi\wedge F \in L^p(\widetilde{\Omega})$, then $A w_m \in L^p(\widetilde{\Omega})$. To check that $A \psi\wedge F \in L^p(\widetilde{\Omega})$, it is enough to check that $A (d_b \circ \phi^{-1}) \in L^p(\widetilde{\Omega})$.
  As $d_b$ is smooth except at $b$, it is enough to check that $(A (d_b \circ \phi^{-1}))^p$ is integrable at $0$.
  But this is a consequence of the fact that $-\Delta d_b \circ \phi^{-1}= \frac{1}{\sqrt{\det g}}A(d_b \circ \phi^{-1})$, and \cref{lemma:laplacian distance} below, from which we deduce that $A (d_b \circ \phi^{-1})(x)$ is equivalent to $\frac{n-1}{\abs{x}}$ when $x$ goes to $0$.
  Therefore, for $p<n$, $(A (d_b \circ \phi^{-1}))^p$ is integrable at $0$, and so $Aw_m \in L^p(\widetilde{\Omega})$. By elliptic regularity, this implies $w_m \in W^{2,p}(\widetilde{\Omega})$, for any $p<n$. By the Sobolev embeddings, $w_m$ is then continuous on $\widetilde{\Omega}$, and so $u_m^d$ is continuous on $\Omega$. This concludes the proof.
\end{proof}

We can now define the set $E_m^d := \{u_m^d<d_b\}$, for any $m>0$. It is an open subset of $M$, on which $u_m^d$ solves the equation $\Delta u_m^d = -2m$.
We can now prove the following lemma.

\begin{lemma}\label{lemma:u=d around b}
  For any $m>0$, we have $u_m^d=d_b$ in a neighborhood of $b$.
\end{lemma}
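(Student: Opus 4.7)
The strategy is to exploit the blow-up of $\Delta d_b$ at $b$: the obstacle $d_b$ acts as a strict supersolution of the PDE satisfied by $u_m^d$ on the non-contact set, which should force the contact region to contain a full neighborhood of $b$.

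\textit{Step 1 (reduction to a smooth obstacle).} By \cref{lemma:regularized distance}, $u_m^d$ is also the solution of the obstacle problem with the smooth obstacle $\widetilde{d_b}$. Since $\widetilde{d_b}<d_b$ only on $\text{\rm Cut}_b(M)$ and the cut locus is at positive distance from $b$, we have $\widetilde{d_b}=d_b$ on some geodesic ball $B_{\rho_0}(b)$. Furthermore, the truncation $u\mapsto u^+$ is admissible (since $d_b\geq 0$) and does not raise the energy (since $m>0$), whence $u_m^d\geq 0$; combined with $u_m^d(b)\leq d_b(b)=0$ this gives $u_m^d(b)=0$, so $b$ is automatically a contact point.

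\textit{Step 2 (asymptotic and comparison).} The classical formula for the Laplacian of the distance function in geodesic normal coordinates gives $\Delta d_b(p)=(n-1)/d_b(p)+O(1)$ as $p\to b$; this is the same estimate that was used in the proof of \cref{lemma:regularity} to verify $L^p$-integrability of $\Delta d_b$ for $p<n$. Hence for any fixed $m>0$ we may choose $\rho\in(0,\rho_0)$ small enough that $-\Delta d_b<m/2$ strictly on $B_\rho(b)\setminus\{b\}$. Suppose now, for contradiction, that the non-contact set $N=\{u_m^d<\widetilde{d_b}\}$ meets $B_\rho(b)$. On $N$, elliptic regularity together with the Euler--Lagrange equation yields $-2\Delta u_m^d=m$ classically, so on $N\cap B_\rho(b)$
\[
\Delta(d_b-u_m^d)=\Delta d_b+m/2>0,
\]
and the function $w:=d_b-u_m^d$ is strictly subharmonic there, with $w>0$ on $N$, $w=0$ on the free-boundary portion of $\partial N$, and $w(b)=0$. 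I would then apply the strong maximum principle, or Hopf's boundary-point lemma, to $w$ on the connected component of $N\cap B_\rho(b)$ whose closure contains $b$, obtaining a contradiction with $w>0$ in the interior.

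\textit{Main obstacle.} The delicate step is justifying the maximum-principle/Hopf argument cleanly at the singular point $b$, where $d_b$ is only Lipschitz and where $N$ can a priori accumulate at $b$ in a complicated way. I expect to handle this in one of two ways: either by constructing an explicit smooth radial subsolution barrier on a punctured ball $B_\rho(b)\setminus\{b\}$ (designed so that it lies below $d_b$ and below $u_m^d$ on $\partial B_\rho(b)$, and satisfies $-\Delta\Phi\leq m/2$), and then invoking the classical comparison principle; or by applying the standard penetration principle for obstacle problems on the annulus $B_\rho(b)\setminus\overline{B_\varepsilon(b)}$, where both $\widetilde{d_b}$ and $u_m^d$ are smooth, and passing to the limit $\varepsilon\to 0$ using the continuity of $u_m^d$ provided by \cref{lemma:regularity}.
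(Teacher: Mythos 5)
Your Step~2 does not produce a contradiction. On the region you consider, the function $w=d_b-u_m^d$ is subharmonic, strictly positive in the interior, and vanishes at the boundary point $b$ --- and this configuration is perfectly consistent: a subharmonic function may be positive in a punctured ball and vanish at the puncture (the model case is $w(x)=|x|$, i.e.\ $d_b$ itself). The strong maximum principle forbids an \emph{interior maximum} of a subsolution, and Hopf's lemma constrains the normal derivative at a boundary point where a subsolution attains its \emph{maximum}; at $b$ your $w$ attains a \emph{minimum}, so neither tool applies. The maximum of $w$ over the component can simply sit on $\partial B_\rho(b)\cap N$, where you have no information. (Two smaller points: the correct negation of the lemma is that $N$ accumulates at $b$, not merely that it meets one ball $B_\rho(b)$; and even then there need not be a single connected component of $N\cap B_\rho(b)$ whose closure contains $b$.)

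What is missing --- and what the paper's proof supplies --- is a lower barrier anchored on the \emph{outer} sphere. The paper builds a radial $C^1$ function $v=f(d_b)$ with four properties: $v=d_b$ on $B_\eps(b)$, $v\le d_b$ on $B_R(b)$, $v<0$ on $\partial B_R(b)$, and $\Delta v\ge -m$, so that $v-u_m^d$ is subharmonic wherever it could have a positive maximum. The strict negativity of $v$ on $\partial B_R(b)$, together with $u_m^d\ge 0$, excludes the outer boundary as a location for that maximum --- exactly the scenario your direct comparison of $u_m^d$ with $d_b$ cannot rule out --- and the identity $v=d_b$ on $B_\eps(b)$ is what turns the resulting inequality $u_m^d\ge v$ into the desired equality $u_m^d=d_b$ near $b$. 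Your fallback barrier $\Phi$ in the ``Main obstacle'' paragraph has the right anchoring and subsolution conditions, but you never require $\Phi=d_b$ near $b$; with only $\Phi\le d_b$ the comparison gives $u_m^d\ge \Phi$, which says nothing about contact. Constructing $f$ with $f(r)=r$ for $r\le\eps$ and $f''+\frac{n-1}{r}f'=-\frac{m}{2}$ for $r>\eps$, and then verifying $f(r)\le r$, $f(R)<0$, and $\Delta v\ge -m$ across $\partial B_\eps(b)$, is the actual content of the proof and cannot be bypassed by a maximum-principle argument on $d_b-u_m^d$ alone.
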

\begin{proof}
  Let us assume that we have constructed a $C^1$ function $v$ on $\overline{B}_R(b)$ for some $R>0$, such that

  \begin{numcases}{}
    v\leq d_b & in  $B_R(b)$, \label{eq:v leq d_b}\\
    v=d_b  & in  $B_\eps(b)$  for some  $\varepsilon\in(0,R)$,\label{eq:v = d_b}\\
    v<0  & in $\partial B_R(b)$, \label{eq:v<0}\\
    \Delta v \geq -m  & in  $B_R(b)$ in the distributional sense. \label{eq:laplacian constraint}
  \end{numcases}
  We will then show that we have $u_m^d \geq v$. The construction of $v$ is postponed to the end of the proof. From \cref{lemma:regularity}, we know that the function $v-u_m^d$ is continuous. Let us first assume that $v-u_m^d$ attains a positive maximum at a point $x \in \overline{B}_R(b)$. We have
  \[
  0<v(x)-u_m^d(x)\leq d_b(x)-u_m^d(x),
  \]
  so $x\in E_m^d$. Moreover, we have $u_m^d \geq 0$ since $\max(u_m^d,0)$ is a better competitor than $u_m^d$ in \eqref{eq:distance constraint}, so
  \[v-u_m^d\leq v<0 \quad \text{on} \quad \partial B_R(b),\]
  and so $x\in B_R(b)$. Hence the function $v-u_m^d$ attains a positive maximum inside the open set $E_m^d \cap B_R(b)$, but its Laplacian verifies in the distributional sense:
  \begin{equation}
    \Delta(v-u_m^d) = \Delta v +m\geq 0,
  \end{equation}
  which yields a contradiction by the maximum principle. Then, the maximum of $v-u_m^d$ on $\overline{B}_R(b)$ is non-positive, and we get
  \[u_m^d \geq v = d_b \quad \text{in} \quad B_\eps(b),\]
  which concludes the proof.

  Let us now construct the function $v$ that was used above. Let $R>0$ be small enough so that $\overline{B}_R(b)$ is contained in a normal neighborhood of $b$. In polar coordinates around $b$, we define $v$ as a radial function. For $\varepsilon>0$ to be chosen small enough later, let $f:[0,R]\to[0,\infty)$ be the $C^1$ function such that
  \begin{numcases}{}
    f(r)=r \quad &\text{if}\quad $r\leq \varepsilon$, \nonumber\\
    f''(r)+\frac{n-1}r f'(r)=-\frac{m}{2} &\text{if}\quad $r> \varepsilon$. \label{eq:spherical laplacian}
  \end{numcases}
  If $n=2$, the unique $C^1$ solution to this system is given by:
  \begin{numcases}{}
    f(r)=r & \text{if} \quad $r\leq \varepsilon$, \nonumber\\
    f(r)=\varepsilon +\frac{m}{8}\left(\varepsilon^2-r^2\right)+\left(\varepsilon+\frac{m}{4}\varepsilon^2\right)\ln(\frac{r}{\varepsilon}) & if \quad $r> \varepsilon$. \label{eq:f(r)2}
  \end{numcases}
  If $n\ge 3$, then the solution is
  \begin{numcases}{}
    f(r)=r & if \quad $r\leq \varepsilon$, \nonumber\\
    f(r)=\varepsilon +\frac{m}{4n}\left(\varepsilon^2-r^2\right) \nonumber \\
    \quad\quad\quad+\left(\varepsilon^{n-1}+\frac{m}{2n}\varepsilon^n\right) \frac{1}{n-2} \left(\frac{1}{\varepsilon^{n-2}}-\frac{1}{r^{n-2}}\right) & if \quad $r> \varepsilon$. \label{eq:f(r)3}
  \end{numcases}
  Then, we set in standard polar coordinates $v(x) = f(r)$ for $x \in B_R(b)$. For $r \leq \varepsilon$, the constraint \eqref{eq:v = d_b} is verified by definition. (For $r>\varepsilon$, we chose $f$ so that $\Delta v$ is small, but still bigger than $-m$.)

  Let us show that \eqref{eq:v leq d_b} holds.
  Let us set $g(r):=f(r)-r$ and prove that $g\leq0$. We have $g(r)=0$ for $r\leq \varepsilon$ so it is sufficient to prove that $g'(r)\leq0$ for $r\geq\varepsilon$. But, as $f$ verifies \eqref{eq:spherical laplacian}, $g$ verifies
  \begin{equation*}
    g''+\frac{n-1}{r}g'=-m-\frac{n-1}{r} \quad \text{for} \quad r\geq \varepsilon.
  \end{equation*}
  In particular, whenever $g'(r)=0$, we have $g''(r)<0$. This implies $g'(r)\leq0$ for $r\geq\varepsilon$, and so \eqref{eq:v leq d_b} is verified.

  Now let us show that \eqref{eq:laplacian constraint} holds if $R$ has been taken small enough. We use the following expression of the Laplacian in coordinates:
  \begin{equation*}
    \Delta v = \frac{1}{\sqrt{\det g}}\partial_i\left(\sqrt{\det g}g^{ij}\partial_j v\right),
  \end{equation*}
  where $g = (g^{ij})$ is the metric of the manifold $M$, and $\det g$ its determinant. We apply this formula to polar coordinates to find that, on ${B_R(b)\setminus \overline{B}_\eps(b)}$, we have in the classical sense
  \begin{align}
    \Delta v = \frac{1}{\sqrt{\det g}}\partial_r\big(\sqrt{\det g}f'(r)\big) &= f'' + \frac{\partial_r \det g}{2\det g}f' \nonumber \\
    &= f'' + \frac{n-1}{r}f'+ \left(\frac{\partial_r \det g}{2\det g}-\frac{n-1}{r}\right)f' \nonumber \\
    &= -\frac{m}{2} + \left(\frac{\partial_r \det g}{2\det g}-\frac{n-1}{r}\right)f'. \label{eq:laplacian v}
  \end{align}
  Note that by applying the Laplacian formula in polar coordinates to the distance function $d_b(x) = r$, we find that
  \begin{equation}
    \Delta d_b = \frac{\partial_r \det g}{2\det g}. \label{eq:laplacian distance}
  \end{equation}
  Because of \cref{lemma:laplacian distance}, we also have
  \begin{equation}
    \Delta d_b(x) = \frac{n-1}{r} + o(1). \nonumber\label{eq:laplacian distance bis}
  \end{equation}
  With \eqref{eq:laplacian v} and \eqref{eq:laplacian distance}, this last equation yields in the classical sense
  \begin{equation}
    \Delta v = -\frac{m}{2}+o(1)f'(r)\quad \text{on} \quad B_R(b)\setminus \overline{B}_\eps(b). \label{eq:laplacian v bis}
  \end{equation}
  Moreover, it is clear from the following expression that $f'$ is bounded on $[\varepsilon,R]$, by a constant independent of $R$, as long as we choose $R\leq1$:
  \begin{equation*}
    f'(r) = -\frac{m}{n}r+\left(\varepsilon^{n-1}+\frac{m}{n}\varepsilon^{n}\right)\frac{1}{r^{n-1}}\quad \text{for}\quad \varepsilon \leq r \leq R.
  \end{equation*}
  Hence from \eqref{eq:laplacian v bis} we see that by taking $R$ small enough (independently of $\varepsilon$), we can ensure that
  \begin{equation}
    \Delta v \geq -m \quad \text{on} \quad B_R(b)\setminus \overline{B}_\eps(b). \nonumber
  \end{equation}
  But from \eqref{eq:laplacian distance bis}, we see that the above is also true on $B_\eps(b)$ if $\varepsilon$ is small enough. Thus the function $v$ is $C^1$ on $B_R(b)$ and verifies $\Delta v(x) \geq -m$ when $x\notin \partial B_\eps(b)$, hence \eqref{eq:laplacian constraint} holds.
  It is also clear from \eqref{eq:f(r)2} and \eqref{eq:f(r)3} that the constraint \eqref{eq:v<0} is verified if $\varepsilon$ is taken small enough. This concludes the proof.
\end{proof}

We can now prove the $C^{1,1}$ regularity of $u_m^d$.

\begin{proposition}\label{prop:regularity}
  For any $\varepsilon>0$, the function $u_m^d$ belongs to $C^{1,1}(M\setminus B_\eps(b))$.
\end{proposition}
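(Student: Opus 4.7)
My plan is to reduce the statement to a classical Euclidean obstacle problem with a \emph{smooth} obstacle, and then invoke the standard $C^{1,1}$ regularity theory for such problems. The key improvement over the argument used in \cref{lemma:regularity} (which only yielded $W^{2,p}$ for $p<n$) is that we now have the smooth barrier $\widetilde{d_b}$ provided by \cref{lemma:regularized distance}, in place of the merely Lipschitz obstacle $d_b$.

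First, I fix an arbitrary point $p\in M\setminus B_\varepsilon(b)$ and choose a small normal coordinate chart $\phi : U \to \widetilde{U}\subset\mathbb{R}^n$ centered at $p$, with $\overline{U}\subset M\setminus\{b\}$ precompact. Repeating verbatim the change-of-variable computation from the proof of \cref{lemma:regularity}, and using \cref{lemma:regularized distance} to replace the obstacle $d_b$ by $\widetilde{d_b}$, one checks that $w := u_m^d\circ\phi^{-1}$ solves the variational inequality associated to
\begin{equation*}
\min\left\{\int_{\widetilde{U}} \bigl(g^{ij}\sqrt{\det g}\,\partial_i w\,\partial_j w - F w\bigr)\,dx \ :\ w\in H^1(\widetilde{U}),\ w\le\psi,\ w=u_m^d\circ\phi^{-1}\ \text{on}\ \partial\widetilde{U}\right\},
\end{equation*}
where the divergence-form operator $Aw = -\partial_j\bigl(g^{ij}\sqrt{\det g}\,\partial_i w\bigr)$ is uniformly elliptic with smooth coefficients on $\widetilde{U}$, the right-hand side $F$ is smooth, and the obstacle $\psi := \widetilde{d_b}\circ\phi^{-1}$ is smooth on $\widetilde{U}$ (this is where the smoothness of $\widetilde{d_b}$ away from $b$ is crucial).

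Second, since $\psi$ is smooth, the quantity $A\psi$ is bounded on $\widetilde{U}$, and likewise $F\in L^\infty(\widetilde{U})$. I would then invoke the classical $C^{1,1}$ regularity theorem for the obstacle problem with a smooth obstacle and a divergence-form uniformly elliptic operator with smooth coefficients (in the same spirit as \cite[Theorem 4.32]{troianiello2013obstacle}, but upgraded from $W^{2,p}$ to $W^{2,\infty}$ by exploiting the boundedness of $A\psi$; equivalently, one may cite the Brezis--Kinderlehrer or Frehse theorems). This yields $w\in C^{1,1}_{\mathrm{loc}}(\widetilde{U})$, and translating back via $\phi$ gives $u_m^d\in C^{1,1}$ in a neighborhood of $p$. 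Since $p\in M\setminus B_\varepsilon(b)$ was arbitrary and this set is compact, a finite covering argument delivers the global conclusion $u_m^d\in C^{1,1}(M\setminus B_\varepsilon(b))$.

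The main obstacle I expect is the bookkeeping in step two: verifying precisely the hypotheses of the sharp $C^{1,1}$ obstacle-problem theorem for our variable-coefficient divergence-form operator, and pinning down the appropriate citation. The geometric input (equivalence with the smooth obstacle problem for $\widetilde{d_b}$) has already been prepared by \cref{lemma:regularized distance,lemma:u=d around b,lemma:regularity}, so once the Euclidean $C^{1,1}$ statement is quoted, no further estimate is required.
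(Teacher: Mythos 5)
Your argument is correct, and it differs from the paper's proof in one structural respect. The paper keeps $d_b$ itself as the obstacle: it works on a single domain $\widehat{\Omega}=\Omega\setminus B_\eps(b)$ whose closure avoids both $b$ and $\mathrm{Cut}_b(M)$ (so that $d_b$ is smooth there), subtracts a smooth extension $\widehat{v_m}$ of the boundary data in order to apply the up-to-the-boundary $W^{2,\infty}$ theorem \cite[Theorem 4.38]{troianiello2013obstacle}, and then covers the remaining region --- a neighborhood of the cut locus --- by the elastic set $E_m^d$, where $u_m^d$ solves $\Delta u_m^d=-2m$ and is therefore smooth. You instead feed the regularized obstacle $\widetilde{d_b}$ of \cref{lemma:regularized distance}, which is smooth on all of $M\setminus\{b\}$ including across the cut locus, directly into the regularity theorem; this lets you work in an arbitrary small chart around any point of $M\setminus B_\eps(b)$, invoke only \emph{interior} $C^{1,1}$ regularity (Brezis--Kinderlehrer/Frehse), and finish by compactness. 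Both routes are sound; yours avoids the two-piece decomposition $\widehat{\Omega}\cup E_m^d$ and the construction of $\widehat{v_m}$, at the price of needing the local rather than the global version of the $W^{2,\infty}$ theorem. The one step you should write out explicitly is the localization itself: $u_m^d$ restricted to the chart domain minimizes the localized functional with obstacle $\widetilde{d_b}$ and its own boundary trace because any local competitor extends by $u_m^d$ to a global competitor for the problem of \cref{lemma:regularized distance}; this is routine but is precisely what licenses the variational inequality on $\widetilde{U}$.
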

\begin{proof}
  We reproduce the proof of \cref{lemma:regularity}, but we replace the open set $\Omega$ with $\widehat{\Omega}:= \Omega\setminus B_\eps(b)$, and the function $v_m$ with a function $\widehat{v_m}$ that is smooth and such that $w_m = u_m$ on $\partial\widehat{\Omega}$.
  We know that such a function exists because $u_m$ is smooth on $\partial B_\eps(b)$ for $\varepsilon$ small enough, as it can be seen from \cref{lemma:u=d around b}.
  This way, we can apply the stronger $W^{2,\infty}$ regularity result for the obstacle problem \cite[Theorem 4.38]{troianiello2013obstacle}, since $d_b$ is smooth on $\widehat{\Omega}$. We get that $u_m^d$ belongs to $W^{2,\infty} = C^{1,1}(\widehat{\Omega})$.
  As $u_m^d$ is smooth on $E_m^d$ and $\partial \widehat{\Omega} \subset E_m^d$, then $u_m^d$ is $C^{1,1}$ on $\widehat{\Omega} \cup E_m = M\setminus B_\eps(b)$.
\end{proof}

We end this section with the following computational lemma, which we used in the proof of \cref{lemma:u=d around b}.
\begin{lemma}\label{lemma:laplacian distance}
  We have
  \begin{equation}\label{e:laplacian_d_b}
    \Delta d_b(p) \underset{p \to b}{=}  \frac{n-1}{d_b(p)} + o(1).
  \end{equation}
\end{lemma}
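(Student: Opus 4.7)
The plan is to work in geodesic polar coordinates $(r,\theta)$ centered at $b$, valid on a punctured ball $B_\rho(b)\setminus\{b\}$ with $\rho$ smaller than the injectivity radius at $b$. In such coordinates, by the Gauss lemma, the metric takes the form
\begin{equation*}
g = dr^2 + h_{ij}(r,\theta)\,d\theta^i d\theta^j,
\end{equation*}
where $h(r,\cdot)$ is a Riemannian metric on the unit sphere $S^{n-1}\subset T_bM$, and $d_b(p) = r$ in these coordinates.

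The key observation is that the Laplacian formula in coordinates, applied to $d_b(p)=r$, reduces to
\begin{equation*}
\Delta d_b = \frac{1}{\sqrt{\det g}}\,\partial_r\bigl(\sqrt{\det g}\bigr) = \partial_r \log\sqrt{\det g},
\end{equation*}
since $g^{rr}=1$ and $g^{ri}=0$. Thus everything reduces to the asymptotic behavior of the volume density $\sqrt{\det g}(r,\theta)$ as $r\to 0^+$.

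The second step is to invoke the standard Taylor expansion of the Riemannian volume density in normal coordinates (or equivalently in geodesic polar coordinates), which asserts
\begin{equation*}
\sqrt{\det g}(r,\theta) = r^{n-1}\bigl(1 + O(r^2)\bigr)\quad\text{as}\quad r\to 0,
\end{equation*}
uniformly in $\theta\in S^{n-1}$. Taking the logarithmic derivative in $r$ yields
\begin{equation*}
\partial_r \log\sqrt{\det g}(r,\theta) = \frac{n-1}{r} + O(r),
\end{equation*}
which, combined with the previous step, gives
\begin{equation*}
\Delta d_b(p) = \frac{n-1}{d_b(p)} + O(d_b(p)),
\end{equation*}
as $p\to b$. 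This is stronger than the claimed $o(1)$ error and concludes the proof.

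The only non-routine input is the expansion $\sqrt{\det g}=r^{n-1}(1+O(r^2))$, which is the classical fact that in normal coordinates the Jacobian of the exponential map at the origin equals $1$ with vanishing first-order correction (the first non-trivial term being of order $r^2$ and involving the Ricci tensor at $b$). Since we only need a $o(1)$ remainder, no curvature identification is required; continuity of the metric at $b$ with the identity in normal coordinates is enough.
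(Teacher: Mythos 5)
Your proof is correct and takes essentially the same route as the paper: both reduce to the fact that in normal coordinates centered at $b$ the metric is Euclidean to first order, you via the radial log-derivative of the volume density in geodesic polar coordinates (a formula the paper itself records as \eqref{eq:laplacian distance}), the paper via the divergence-form Laplacian in Cartesian normal coordinates applied to $d_b(x)=\abs{x}$. One small caveat on your closing remark: to conclude $\partial_r\log\sqrt{\det g}=\tfrac{n-1}{r}+o(1)$ you need the first derivatives of the volume density to vanish at the origin (true in normal coordinates) together with their continuity, not merely continuity of the metric, but this is immaterial here since $M$ is smooth.
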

\begin{proof}
  We compute $\Delta d_b$ in normal coordinates centered at $b$. Let $g = (g^{ij})$ be the metric of $M$ in these coordinates. We have
  \begin{equation*}
    \Delta d_b (x) = \frac{1}{\sqrt{\det g}}\,\partial_i \left(\sqrt{\det g}\,g^{ij} \partial_j d_b\right)(x).
  \end{equation*}
  In normal coordinates, the metric is euclidean up to order $1$ as $x$ goes to $0$. So we have
  \begin{equation*}
    g^{ij}(x) = \delta^{ij} + o(x), \quad \partial_i \left(\sqrt{\det g}\,g^{ij}\right)(x) = o(1)\quad \text{and} \quad \frac{1}{\sqrt{\det g}} = 1 + o(x).
  \end{equation*}
Moreover, in normal coordinates, we have $d_b(x) = \abs{x}$, and so
  \begin{equation*}
    \delta^{ij}\partial_{ij}d_b(x) = \frac{n-1}{\abs{x}},
  \end{equation*}
  which gives precisely \eqref{e:laplacian_d_b}.
\end{proof}


\section{Equivalence of the two constraints}\label{sec:equivalence constraints}

\begin{proof}[Proof of \cref{prop:equivalence constraints}]
 As above, we denote by $u_m^d$ the minimizer of \eqref{eq:distance constraint}. In order to show that $u_m^d$ solves \eqref{eq:gradient constraint manifold}, it is sufficient to show that $u_m^d$ is an admissible competitor in \eqref{eq:gradient constraint manifold}, that is, $|u_m^d|\le 1$ on $M$. Recall that the function $u_m^d$ is $C^1$ except at $b$, by  \cref{prop:regularity}.

 First, suppose that $x\neq b$ is in the contact set $P_m^d:=\{u_m^d=d_b\}$. By \cref{lemma:regularized distance}, we have $x\notin \text{\rm Cut}_b(M)$, and so the distance function $d_b$ is differentiable at $x$. It is a simple consequence of the constraint $u_m^d \leq d_b$ and the equality $u_m^d(x)=d_b(x)$ that we have $\nabla u_m^d(x)=\nabla d_b(x)$. The desired inequality $\abs{\nabla u_m^d(x)}\leq 1$ follows.

  In the non-contact set $E_m^d=\{u_m^d < d_b\}$, the function $u_m^d$ solves the PDE
  \begin{equation}
    \label{eq:EL}\Delta u_m^d = -2m.
  \end{equation}
  In particular it is smooth, and we may apply the Bochner-Weitzenböck formula:
    \begin{equation}
        \label{eq:bochner}
        \Delta \left(\abs{\nabla u_m^d}^2\right) = 2\mathrm{Ric}(\nabla u_m^d,\nabla u_m^d)+2\abs{D^2u_m^d}^2+2(\nabla \Delta u_m^d,\nabla u_m^d),
      \end{equation}
      where $\mathrm{Ric}$ denotes the Ricci curvature tensor on the manifold $M$ and $D^2 u_m^d$ is the second covariant derivative of $u_m^d$. The last term is $0$ because of \eqref{eq:EL}. As for the second term, we have:
    \begin{align}
      \abs{D^2u_m^d}^2 &\geq \frac1n\left(\mathrm{Trace}(D^2u_m^d)\right)^2=  4\frac{m^2}{n}\,,
    \end{align}
    where the last inequality is due to \eqref{eq:EL}.
    As the manifold $M$ is compact, there exists a constant $K>0$ (depending on $M$ only) such that the Ricci curvature is bounded from below by $-K$.
    In the end, \eqref{eq:bochner} yields
    \begin{equation}\label{eq:bochner applied}
      \Delta \left(\abs{\nabla u_m^d}^2\right) + 2K \abs{\nabla u_m^d}^2 \geq \frac{8}{n}m^2.
    \end{equation}
    Now notice that by \eqref{eq:EL},
    \begin{align}
      \Delta\left((u_m^d)^2\right)&=2u_m^d\Delta u_m^d+2\abs{\nabla u_m^d}^2  =-4mu_m^d+2\abs{\nabla u_m^d}^2, \nonumber
    \end{align}
    so \eqref{eq:bochner applied} gives
    \begin{align*}
      \Delta\left(\abs{\nabla u_m^d}^2+K(u_m^d)^2\right)= \frac8n m^2 - 4Km\, u_m^d & \geq \frac8n m^2 - 4Km\, d_b  \geq \frac8n m^2 - 4Km\, \mathrm{diam}(M)
    \end{align*}
    Thus, if $m\geq \frac{n}{2} K \mathrm{diam}(M)$, the function $\abs{\nabla u_m^d}^2+K(u_m^d)^2$ is subharmonic in the non-contact set $E_m^d$. From \cref{lemma:u=d around b}, we have $\overline{E_m^d}\subset M\setminus\{b\}$, and with \cref{prop:regularity},
    we get that the function $\abs{\nabla u_m^d}^2+K(u_m^d)^2$ is continuous on $\overline{E_m^d}\subset M\setminus\{b\}$. Therefore we may apply the maximum principle to get
    \begin{align}
      \abs{\nabla u_m^d}^2  \leq \abs{\nabla u_m^d}^2+K(u_m^d)^2& \leq \sup\limits_{\partial E_m^d}\left(\abs{\nabla u_m^d}^2+K(u_m^d)^2\right) = 1+K\sup\limits_{\partial E_m^d} (u_m^d)^2 \nonumber \\
      &\leq 1+K\sup\limits_{\partial E_m^d} (d_b)^2 \leq 1+K\mathrm{diam}(M)^2\nonumber
    \end{align}
    With \eqref{eq:bochner applied}, this last inequality gives
    \begin{equation}
      \Delta \left(\abs{\nabla u_m^d}^2\right) \geq \frac{8}{n}m^2 - 2K(1+K\mathrm{diam}(M)^2)\nonumber
    \end{equation}
    Thus, whenever the right-hand side is nonnegative, the maximum principle applied to the function $\abs{\nabla u_m^d}^2$ on the open set $E_m^d$ implies that $\abs{\nabla u_m^d}^2 < 1$ on this set. This concludes the proof.
\end{proof}


\section{Convergence of the non-contact set}\label{sec:convergence non-contact set}
In this section we show that the non-contact set $E_{m}^d=\{u_m^d< d_b\}$ (which coincides with $E_m$, for $m$ large enough, as we showed in the previous section) Hausdorff-converges to $\text{\rm Cut}_b(M)$.
\begin{lemma}
  \label{lemma:uniform convergence}
  We have $\displaystyle\|d_b-u_m^d\|_{L^\infty(M)}\leq \frac{C}{m}$, for some positive constant $C$ depending on $M$ only.
\end{lemma}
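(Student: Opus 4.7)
The plan is to establish the $O(1/m)$ rate by a torsion-type barrier argument, exploiting the fact that on the non-contact set $E_m^d=\{u_m^d<d_b\}$ the minimizer $u_m^d$ satisfies the Euler--Lagrange equation $\Delta u_m^d=-2m$. It suffices to consider $m\ge m_1$ for some threshold $m_1$ depending only on $M$ (in particular large enough that \cref{prop:equivalence constraints} applies): for smaller $m$ the trivial bound $\|d_b-u_m^d\|_{L^\infty}\le\mathrm{diam}(M)$ already gives the desired estimate after enlarging $C$. In this regime $u_m^d$ is $1$-Lipschitz on $M$.

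Let $M^{\ast}=\max_M(d_b-u_m^d)$, attained at some point $x_0$, and assume $M^{\ast}>0$ (otherwise there is nothing to prove). By \cref{lemma:u=d around b}, $x_0\ne b$, and so $x_0\in E_m^d$. Set $r:=d(x_0,\partial E_m^d)>0$ and first consider $r\le r_0$, where $r_0>0$ depends only on $M$ and is smaller than the injectivity radius. On the geodesic ball $B_r(x_0)\subset E_m^d$ let $\phi$ be the unique solution of the Dirichlet problem $\Delta\phi=-2m$ in $B_r(x_0)$, $\phi=0$ on $\partial B_r(x_0)$; by the maximum principle $\phi\ge 0$, and a perturbative comparison with the Euclidean torsion profile $(m/n)(r^2-|y-x_0|^2)$ in normal coordinates at $x_0$ yields the key lower bound $\phi(x_0)\ge c\,m r^2$ for some constant $c>0$ depending only on $M$. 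Since $u_m^d$ and $\phi$ satisfy the same equation in $B_r(x_0)$, the difference $u_m^d-\phi$ is harmonic there, and the minimum principle combined with the $1$-Lipschitz bound on $d_b$ gives
\begin{equation*}
u_m^d(x_0)\ge \phi(x_0)+\min_{\partial B_r(x_0)} u_m^d \ \ge\ c\,m r^2 + \bigl(d_b(x_0)-r\bigr)-M^{\ast}.
\end{equation*}
Combined with $u_m^d(x_0)=d_b(x_0)-M^{\ast}$, this simplifies to $r\ge c\,m r^2$, i.e., $r\le 1/(cm)$. Finally, choosing $y_0\in\partial E_m^d$ with $d(x_0,y_0)=r$ (so $u_m^d(y_0)=d_b(y_0)$) and using that both $d_b$ and $u_m^d$ are $1$-Lipschitz, one concludes $M^{\ast}=d_b(x_0)-u_m^d(x_0)\le 2\,d(x_0,y_0)\le 2/(cm)$.

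The case $r>r_0$ is handled by running the same argument with $r_0$ in place of $r$: one deduces $r_0\le 1/(cm)$, so $m$ is then bounded by a constant depending only on $M$ and the trivial estimate yields $M^{\ast}\le C/m$ after enlarging $C$. The main obstacle is the torsion lower bound $\phi(x_0)\ge c\,m r^2$ on a small geodesic ball of a Riemannian manifold; this requires a careful comparison with the Euclidean torsion function in normal coordinates (the metric being Euclidean up to $O(r^2)$ errors) so that the constant $c$ is uniform in $r$ and $m$. The remaining ingredients---namely the harmonicity of $u_m^d-\phi$ and the reduction to a closest point $y_0$ on $\partial E_m^d$---are direct consequences of the equation $\Delta u_m^d=-2m$ in $E_m^d$ and the $1$-Lipschitz bound provided by \cref{prop:equivalence constraints}.
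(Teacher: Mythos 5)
Your proof is correct and rests on essentially the same mechanism as the paper's: a quadratic (torsion-type) barrier with $\Delta\sim -m$ on a geodesic ball contained in $E_m^d$, controlled via the near-Euclidean behaviour of $\Delta d_{x_0}^2$ in normal coordinates, forces every relevant point of the elastic set to lie within $O(1/m)$ of the contact set, after which the $1$-Lipschitz bounds on $u_m^d$ (from \cref{prop:equivalence constraints}) and $d_b$ finish the estimate. The only cosmetic difference is that the paper runs the comparison at an arbitrary point of $E_m^d$ with the explicit barrier $\frac{m}{2n}d_{\overline p}^{\,2}$ on a ball of fixed radius $5n/m$ and derives a contradiction, whereas you work at the maximum point of $d_b-u_m^d$ with the Dirichlet torsion function of $B_r(x_0)$.
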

\begin{proof}
  We only need to prove the proposition for $m$ large enough. Therefore, thanks to \cref{prop:equivalence constraints}, we will assume that $m$ is large enough so that $\abs{\nabla u_m^d}\leq1$. We only need to show the estimate on $E_m^d$ since outside this set, $u_m^d$ and $d_b$ are the same.
  We will show that for $m$ large enough, we have
  \begin{equation}\label{eq:elastic set thin}
    \forall \overline{p}\in E_m^d, \; \exists p\in (E_m^d)^c\quad \text{such that}\quad d(p,\overline{p})< 5n/m.
  \end{equation}
  This will conclude the proof since by the $1$-Lipschitzianity of $u_m^d$ and $d_b$, we then have
  \begin{align}
    \abs{d_b(\overline{p})-u_m^d(\overline{p})}
    &\leq \abs{d_b(p)-u_m^d(p)} + 2 d(\overline{p},p) = 0 + 2 d(\overline{p},p)\leq \frac{10n}{m}, \nonumber
  \end{align}
  which is what we need.
  In order to prove \eqref{eq:elastic set thin}, we argue by contradiction and assume that $B_{5n/m}(\overline{p}) \subset E_m^d$.
  We want to apply the maximum principle  to the function $v$ defined on $B_{5n/m}(\overline{p})$ by the following formula
  \[v(p) := u_m^d(p) - \inf\limits_{\partial B_{\frac{5n}{m}}(\overline{p})} u_m^d + \frac{m}{2n}\left(d_{\overline{p}}(p)^2-\left(\frac{5n}{m}\right)^2\right).\]
  For any $p\in B_{5n/m}(\overline{p})$, we have $\Delta u_m^d (p) = -2m$ because we have assumed $B_{5n/m}(\overline{p}) \subset E_m^d$. To estimate the Laplacian of $d_{\overline{p}}^2$, we use some normal coordinates $(x^i)$ centered at $\overline{p}$.
  In these coordinates, the metric is euclidean up to order $1$, uniformly in $\overline{p}$ since $M$ is compact, and $d_{\overline{p}}(x) = \abs{x}$ (see \cref{lemma:laplacian distance}).
  We get that for $m$ large enough, independently of $\overline{p}$,
  \[\forall p \in B_{5n/m}(\overline{p}), \quad \Delta d_{\overline{p}}^2(p) \leq 2 (2n).\]
  All in all, we obtain on $B_{5n/m}(\overline{p}) \subset E_m^d$,
  \[\Delta v \leq -2m+\frac{m}{2n}2(2n) = 0.\]
  So we can apply the maximum principle to $v$ to get
  \[ v(\overline{p})\geq \inf\limits_{\partial B_{\frac{5n}{m}}(\overline{p})} v,  \]
  \textit{i.e.}
  \begin{equation}\label{eq:quadratic growth}
    u_m^d(\overline{p}) - \inf\limits_{\partial B_{\frac{5n}{m}}(\overline{p})} u_m^d - \frac{m}{4n}\left(\frac{5n}{m}\right)^2 \geq 0.
  \end{equation}
  As we have taken $m$ large enough so that $\abs{\nabla u_m^d}\leq 1$, we also have
  \[u_m^d(\overline{p}) - \inf\limits_{\partial B_{\frac{5n}{m}}(\overline{p})} u_m^d\leq \frac{5n}{m} < \frac{m}{4n}\left(\frac{5n}{m}\right)^2,\]
  which contradicts the estimate \eqref{eq:quadratic growth}. This concludes the proof.
\end{proof}
\begin{proposition}[Monotonicity of $u_m^d$ and $E_m^d$, and convergence of $E_m^d$]\label{thm:non-contact set converges to cut locus}
  For any $m>m'>0$, we have
  $$u_{m'}^d\le u_{m}^d\le d_b\qquad\text{and}\qquad \text{\rm Cut}_b(M) \subset E_m^d\subset E_{m'}^d.$$
  Moreover,
  \begin{equation*}
    E_{m}^d \tend{m}{\infty}\text{\rm Cut}_b(M)\quad\text{in the Hausdorff sense.}
  \end{equation*}

\end{proposition}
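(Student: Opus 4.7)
Proof proposal.

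The pointwise inequalities $u_{m'}^d \le u_m^d \le d_b$ for $m > m' > 0$ follow from a standard min-max comparison. Setting $U := \max(u_m^d, u_{m'}^d)$ and $V := \min(u_m^d, u_{m'}^d)$, both are admissible competitors in \eqref{eq:distance constraint} since each of $u_m^d, u_{m'}^d$ lies below $d_b$. Testing the minimality of $u_m^d$ against $U$ (in the $m$-functional) and of $u_{m'}^d$ against $V$ (in the $m'$-functional), adding the two inequalities, and using the a.e.\ identity $|\nabla U|^2 + |\nabla V|^2 = |\nabla u_m^d|^2 + |\nabla u_{m'}^d|^2$, I would obtain $(m-m')\int_M (u_{m'}^d - u_m^d)_+ \le 0$, which forces $u_{m'}^d \le u_m^d$. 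The inclusion $E_m^d \subseteq E_{m'}^d$ is then immediate from the definition, and $\text{\rm Cut}_b(M) \subseteq E_m^d$ is a direct consequence of \cref{lemma:regularized distance}, since $u_m^d \le \widetilde{d_b} < d_b$ on the cut locus.

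For the Hausdorff convergence, the plan is to pass to closures. The family $\{\overline{E_m^d}\}_{m>0}$ is a decreasing sequence of closed subsets of the compact manifold $M$, hence it converges in the Hausdorff sense to $K := \bigcap_m \overline{E_m^d}$. Since $\text{\rm Cut}_b(M) \subseteq E_m^d \subseteq \overline{E_m^d}$ for every $m$, proving $K = \text{\rm Cut}_b(M)$ will automatically give the Hausdorff convergence of the open sets $E_m^d$. The containment $\text{\rm Cut}_b(M) \subseteq K$ is clear, so the task reduces to the local statement: for every $p \in M \setminus (\text{\rm Cut}_b(M) \cup \{b\})$, there exist $m_0$ and an open neighborhood $V$ of $p$ with $u_m^d \equiv d_b$ on $V$ for every $m \ge m_0$.

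To produce such a neighborhood I would construct a local sub-solution at $p$, adapting the barrier from the proof of \cref{lemma:u=d around b}. Choose $R > 0$ such that $B_R(p) \subset M \setminus (\text{\rm Cut}_b(M) \cup \{b\})$, so that on $B_R(p)$ the function $d_b$ is smooth with $\Delta d_b$ bounded below, say by $-K_0$. For parameters $\epsilon \in (0,R)$ and $A \le (m-K_0)/(2n)$, set
\[
v_p(x) \;:=\; d_b(x) - A\bigl(d(x,p) - \epsilon\bigr)_+^2, \qquad x \in B_R(p).
\]
Then $v_p \le d_b$ with equality on $B_\epsilon(p)$. Computing the Laplacian of $d(\cdot,p)^2$ in normal coordinates at $p$, where it equals its Euclidean value $2n$ modulo curvature corrections of size $O(r^2)$, yields $\Delta v_p \ge -m$ throughout $B_R(p)$; and the uniform bound $\|d_b - u_m^d\|_\infty \le C/m$ of \cref{lemma:uniform convergence}, combined with $R-\epsilon$ of order $1/m$ chosen so that $A(R-\epsilon)^2 \ge C/m$, gives $v_p \le u_m^d$ on $\partial B_R(p)$. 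Hence $u_m^d - v_p$ is superharmonic on $E_m^d \cap B_R(p)$ and nonnegative on its boundary (trivially on $\partial E_m^d \cap B_R(p)$, where $u_m^d = d_b \ge v_p$, and on $\partial B_R(p)$ by construction), so the minimum principle yields $u_m^d \ge v_p$ throughout $B_R(p)$, and in particular $u_m^d \equiv d_b$ on the open neighborhood $B_\epsilon(p)$. Taking $\epsilon = \tfrac12 d\bigl(p, \text{\rm Cut}_b(M) \cup \{b\}\bigr)$ makes the threshold $m \ge m_0$ uniform on compact subsets of $M \setminus (\text{\rm Cut}_b(M) \cup \{b\})$. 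The point $p = b$ is eliminated separately by an analogous radial construction $f(r) = A(r-\epsilon_0)_+^2$ inside a fixed ball $B_{R_0}(b) \subset M \setminus \text{\rm Cut}_b(M)$, with $\epsilon_0 < R_0$ independent of $m$, showing $u_m^d \equiv d_b$ on $B_{\epsilon_0}(b)$ for $m$ large. The main technical point is the inequality $\Delta v_p \ge -m$ on a curved manifold: the curvature corrections to the Hessian of $d(\cdot,p)^2$ are $O(r^2)$ and therefore harmless at the $1/m$ scale we use, but this is precisely where the Riemannian geometry of $M$ genuinely enters, and the bookkeeping must be done uniformly in $p$ to extract the eventual Hausdorff convergence.
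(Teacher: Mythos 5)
Your proof is correct. The monotonicity and the two inclusions are obtained exactly as in the paper (the same $\max/\min$ competitor argument, and $\text{\rm Cut}_b(M)\subset E_m^d$ from \cref{lemma:regularized distance}). For the Hausdorff convergence the mechanism is also the paper's --- a maximum-principle comparison inside $E_m^d$ using $\Delta u_m^d=-2m$ together with the rate $\|u_m^d-d_b\|_{L^\infty}\le C/m$ of \cref{lemma:uniform convergence} --- but you run it with a family of explicit local barriers $v_p=d_b-A\big(d(\cdot,p)-\epsilon\big)_+^2$, one for each point $p$ at fixed distance from $\text{\rm Cut}_b(M)\cup\{b\}$, whereas the paper uses a single global comparison function $\phi\le d_b$ with $\phi=d_b$ on $\Omega_{2\varepsilon}$ and $\phi<d_b$ on $\partial\Omega_\varepsilon$, whose existence it simply asserts. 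Your version buys an explicit quantitative statement ($u_m^d\equiv d_b$ on $B_\epsilon(p)$ for $m\ge m_0$) at the price of the curvature bookkeeping for $\Delta\big[(d(\cdot,p)-\epsilon)_+^2\big]$, which does go through as you indicate. Two points to tighten: the phrase ``$R-\epsilon$ of order $1/m$'' is off --- with $A\sim m/(2n)$ the boundary inequality $A(R-\epsilon)^2\ge C/m$ holds for any \emph{fixed} gap $R-\epsilon>0$ once $m$ is large, whereas a gap of order $1/m$ would force $A\sim m^2$, incompatible with $A\le (m-K_0)/(2n)$; and the lower bound $\Delta d_b\ge -K_0$ on $B_R(p)$ degenerates as $p$ approaches the cut locus, so $m_0$ necessarily depends on $d(p,\text{\rm Cut}_b(M))$ --- harmless for Hausdorff convergence, but worth saying. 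Finally, the separate radial construction at $b$ is not needed: \cref{lemma:u=d around b} combined with the monotonicity you have just proved already gives a fixed neighborhood of $b$ on which $u_m^d=d_b$ for all large $m$.
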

\begin{proof}
  The fact that, for any $m>0$, $\text{\rm Cut}_b(M)\subset E_{m}^d$, is a direct consequence of  \cref{lemma:regularized distance}. Let us prove the second inclusion. For $m>m'>0$, note that by the respective  minimality of $u_m^d$ and $u_{m'}^d$, we have
  \begin{align*}
    \int_M \abs{\nabla \max(u_{m'}^d,u_m^d)}^2-m \int_M \max(u_{m'}^d,u_m^d)&\geq \int_M \abs{\nabla u_m^d}^2-m \int_M u_m^d,
    \\ \text{and} \quad\int_M \abs{\nabla \min(u_{m'}^d,u_m^d)}^2-m' \int_M \min(u_{m'}^d,u_m^d)&\geq \int_M \abs{\nabla u_{m'}^d}^2-m' \int_M u_{m'}^d.
  \end{align*}
	Using the formulas
  \begin{align*}
    \nabla \max(u_{m'}^d,u_m^d) &= \nabla u_{m'}^d \1_{\{u_{m'}^d>u_m^d\}} + \nabla u_{m}^d \1_{\{u_{m'}^d\leq u_m^d\}}, \\
    \nabla \min(u_{m'}^d,u_m^d) &= \nabla u_{m}^d \1_{\{u_{m'}^d>u_m^d\}} + \nabla u_{m'}^d \1_{\{u_{m'}^d\leq u_m^d\}},
  \end{align*}
  we obtain
  \begin{align*}
    \int_{\{u_{m'}^d>u_m^d\}} \left(\abs{\nabla u_{m'}^d}^2 - \abs{\nabla u_m^d}^2\right)
    &\geq -m \int_{\{u_{m'}^d>u_m^d\}} \left(u_m^d - u_{m'}^d\right),
    \\ \text{and}\quad\int_{\{u_{m'}^d>u_m^d\}} \left(\abs{\nabla u_m^d}^2-\abs{\nabla u_{m'}^d}^2\right)
    &\geq -m'\int_{\{u_{m'}^d>u_m^d\}} \left(u_{m'}^d-u_m^d\right).
  \end{align*}
  Summing these two inequalities, we get
  \begin{equation*}
    0\geq(m-m')\int_{\{u_{m'}^d>u_m^d\}} \left(u_{m'}^d-u_m^d\right),
  \end{equation*}
  and so $u_m^d\geq u_{m'}^d$. In particular, $E_m^d\subset E_{m'}^d$.

  We are left to show the Hausdorff convergence in $E_m^d$ to $\text{\rm Cut}_b(M)$. Given $\varepsilon>0$, let us set
  \[\Omega_\varepsilon := \big\{x\in M : d(x,\text{\rm Cut}_b(M))>\varepsilon \big\}.\]
  We will show that for $m$ large enough we have $E_{m}^d\subset (\Omega_{2\varepsilon})^c$, which will conclude the proof. Let $\phi:M\to \R$ be a function such that $\phi\leq d_b$ on $M$, $\phi = d_b$ on $\Omega_{2\varepsilon}$, $\phi < d_b$ on $\partial \Omega_{\varepsilon}$, and $\phi$ is smooth on $M$ except at $b$.
  We want to apply the maximum principle to the function $\phi-u_m^d$ on $E_m^d\cap \Omega_\varepsilon$.
  We have
  \[\Delta(\phi - u_m^d) = \Delta\phi+2m\quad\text{on}\quad E_m^d\cap \Omega_\varepsilon,\]
  so for $m$ large enough the function $\phi-u_m^d$ is subharmonic on $E_m^d\cap \Omega_\varepsilon$. On $\partial \Omega_\varepsilon$, we have $\phi< d_b$ and $u_m^d$ converges uniformly to $d_b$ as $m$ tends to $+\infty$ (\cref{lemma:uniform convergence}) so $\phi-u_m^d \leq 0$, for $m$ large enough.
  On $\partial E_m^d$, we have $\phi-u_m^d = \phi -d_b \leq 0$. Thus the maximum principle implies that for $m$ large enough, we have $\phi - u_m^d \leq 0$ on $E_m^d\cap \Omega_\varepsilon$. As $\phi = d_b$ on $\Omega_{2\varepsilon}$, we get $u_m^d \geq d_b$ on $E_m^d\cap \Omega_{2\varepsilon}$.
  Since by definition we have $u_m^d < d_b$ on $E_m^d$, we get $E_{m}^d\subset (\Omega_{2\varepsilon})^c$, which concludes the proof.
\end{proof}


\section{Semiconcavity}\label{sec:semiconcavity}
This section is dedicated to the semiconcavity of the solutions to the obstacle problems \eqref{eq:distance constraint} and \eqref{eq:elastoplasticequivalence}. The key result is \cref{prop:semiconcavity}, which applies to both \cref{t:main-main} and \cref{thm:omega}.
\smallskip

In the case of \cref{thm:omega}, we have $\mathring{M}=\Omega$ and $\partial M=\partial\Omega$.

\begin{proposition}\label{prop:semiconcavity}
	Let $M = \mathring{M}\sqcup \partial M$ be a smooth compact Riemanniannian manifold, with  (possibly empty) boundary $\partial M$. Suppose that for some constants $L>0$ and $C>0$, we are given the following:
	\begin{enumerate}[\rm (a)]
	\item a function $d:\mathring{M}\to\R$, which is bounded and $C$-semiconcave on $\mathring{M}$;
	\item a family of functions $u_m:\mathring{M}\to\R^n$, for $m>0$, such that:
	\begin{enumerate}
	\item[\quad\rm (b.1)] for every $m>0$, $u_m\le d$ on $\mathring{M}$;
	\item[\quad\rm (b.2)] for every $m>0$, $u_m$ is $L$-Lipschitz on $\mathring{M}$;
	\item[\quad\rm (b.3)] on the set $E_m:=\{u_m<d\}$, $u_m$ is $C^\infty$ smooth and
	$$-\Delta u_m=2m\quad\text{in}\quad E_m\,;$$
	\item[\quad\rm (b.4)] $E_m$ is precompact in $\mathring{M}$;
	\item[\quad\rm (b.5)] for every $\eta>0$, for every $m>0$, there is a neighborhood $\mathcal N_{\eta,m}$ of $\partial  E_m$ in $\mathring{M}$ such that
	$$D^2u_m\le \left(C+\eta \right)Id\quad\text{in}\quad E_m\cap \mathcal N_{\eta,m}.$$
	\end{enumerate}
	\end{enumerate}
	Then, for every $\eta >0$, there exists $m_0>0$  such that
	\begin{center}
	$u_m$ is $(C+\eta)$-semiconcave on $\mathring{M}$,  for every $m\ge m_0$.
	 \end{center}
\end{proposition}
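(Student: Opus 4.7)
I would follow the sketch given right after the statement. Fix $\eta>0$ and consider, for a unit-speed geodesic $\gamma:[a,b]\to\mathring{M}$ and $\lambda\in[0,1]$, the \emph{defect functional}
\begin{equation*}
c(\gamma,\lambda,u_m) := (C+\eta)\lambda(1-\lambda)(b-a)^2 - \big((1-\lambda)u_m(\gamma(a)) + \lambda u_m(\gamma(b)) - u_m(\gamma(\lambda_{ab}))\big).
\end{equation*}
By \eqref{e:semiconcavity} it suffices to show that, for $m$ large, $c\ge 0$ on all admissible triples. Suppose toward a contradiction $c(\gamma,\lambda,u_m)<0$. If the midpoint $\gamma(\lambda_{ab})$ lay in the contact set $\{u_m=d\}$, then $u_m=d$ at $\gamma(\lambda_{ab})$ together with $u_m\le d$ at the endpoints and the $C$-semiconcavity of $d$ would give
\begin{equation*}
(1-\lambda)u_m(\gamma(a)) + \lambda u_m(\gamma(b)) - u_m(\gamma(\lambda_{ab})) \le (1-\lambda)d(\gamma(a)) + \lambda d(\gamma(b)) - d(\gamma(\lambda_{ab})) \le C\lambda(1-\lambda)(b-a)^2,
\end{equation*}
whence $c\ge\eta\lambda(1-\lambda)(b-a)^2\ge 0$, a contradiction. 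Hence $\gamma(\lambda_{ab})\in E_m$; a standard sub-geodesic shrinking argument then produces $(\widehat\gamma,\widehat\lambda)$ with $c(\widehat\gamma,\widehat\lambda,u_m)\le c(\gamma,\lambda,u_m)<0$ and $\widehat\gamma((\widehat a,\widehat b))\subset E_m$. By (b.3) and \cref{prop:D2}, the negativity of $c(\widehat\gamma,\widehat\lambda,u_m)$ is incompatible with the pointwise interior estimate $D^2 u_m\le 2(C+\eta)\mathrm{Id}$ on $E_m$, so the problem reduces to proving this estimate for $m$ large.

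For this interior estimate I would apply the Bochner-type argument from the sketch. Define
\begin{equation*}
f_\varepsilon(p,X) := D^2 u_m(p)(X,X) + \varepsilon\big(C_1|\nabla u_m|^2(p) + C_2 u_m^2(p) - C_3 u_m(p)\big)
\end{equation*}
on the unit tangent bundle over $\overline{E_m}$, and examine a maximum point $(q,Y)$. If $q\in\partial E_m$, hypothesis (b.5) applied with $\eta/2$, together with the bounds $|\nabla u_m|\le L$ and $|u_m|\le\|d\|_\infty$, yields $f_\varepsilon(q,Y)\le C+\eta/2+O(\varepsilon)\le 2C+\eta$ for $\varepsilon$ small enough. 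If instead $q\in E_m$, extend $Y$ to a unit vector field $X(p)$ on a neighborhood of $q$ with $X(q)=Y$ and $DX(q)=0$ (by parallel transport along the geodesics issuing from $q$), so that $p\mapsto f_\varepsilon(p,X(p))$ has a local maximum at $q$. Compute $\Delta[f_\varepsilon(\cdot,X(\cdot))](q)\le 0$ by using $\Delta u_m=-2m$, the Bochner--Weitzenböck identity for $|\nabla u_m|^2$, the commutation formula of Guan \cite{guanSecondOrderEstimates2014} for $\Delta[D^2 u_m(X(\cdot),X(\cdot))]$, and the pointwise inequality $|D^2 u_m|^2\ge\tfrac{1}{n}(\Delta u_m)^2=\tfrac{4m^2}{n}$. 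One is left with an expansion of the form $a_1\varepsilon m^2-a_2\varepsilon m+R$, where $a_1>0$ is proportional to $C_1/n$, $a_2>0$ is proportional to $C_3$, and $R$ is a remainder controlled by the curvature of $M$, by $L$, $\|d\|_\infty$, $C_1$, $C_2$ and by $f_\varepsilon(q,Y)$ itself.

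\textbf{Balancing the constants -- the main obstacle.} The delicate step is to fix $C_1,C_2,C_3,\varepsilon$ and a threshold $m_0$ in the right order so that this expansion is strictly positive whenever $f_\varepsilon(q,Y)>2C+\eta$, contradicting the maximality of $q$. Morally: $C_1$ and $C_2$ must be large enough that the squared-gradient and squared-$u_m$ terms they generate absorb the curvature and Ricci contributions coming from the non-Euclidean commutation of $\Delta$ with $D^2$ and from the non-vanishing $D^2 X(q)$; once they are fixed, $C_3$ is taken large so that the Hessian piece of the expansion becomes coercive; $\varepsilon$ is then shrunk so that $\varepsilon(C_1 L^2+C_2\|d\|_\infty^2+C_3\|d\|_\infty)\le\eta$; finally $m_0$ is chosen so that $a_1\varepsilon m^2$ dominates $a_2\varepsilon m$ for all $m\ge m_0$. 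This yields $f_\varepsilon\le 2C+\eta$ on the whole sphere bundle over $\overline{E_m}$, hence $D^2 u_m\le 2(C+\eta)\,\mathrm{Id}$ on $E_m$, contradicting $c(\widehat\gamma,\widehat\lambda,u_m)<0$ and completing the proof. The hard part is precisely this final balancing: the non-Euclidean corrections from Guan's identity and from the extension $X(p)$ have no flat analogue, and one must keep careful track of their signs and orders in $m$ before the coercive $m^2$-contribution can be invoked.
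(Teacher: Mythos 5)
Your proposal follows the paper's proof essentially step for step: the defect functional $c(\gamma,\lambda,u_m)$, the reduction to geodesics whose open interior lies in $E_m$ via the $C$-semiconcavity of $d$, the further reduction to a pointwise Hessian bound on $E_m$, the auxiliary function $f_\varepsilon$ on the unit sphere bundle, the exclusion of boundary maxima by (b.5) together with (b.4), the extension of $Y$ to a field with $DY(q)=0$ and $|D^2Y(q)|\le C^M$, and the combination of the fourth-order commutation identities with the Bochner--Weitzenböck formula. The structure is correct and is the paper's.

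The one place where your account is off is the final balancing, which you yourself flag as the main obstacle. Two corrections. First, the sign and role of $C_3$: the summand $-\varepsilon C_3 u_m$ of $f_\varepsilon$ contributes $-\varepsilon C_3\Delta u_m=+2\varepsilon C_3 m$ to $\Delta f_\varepsilon$, i.e.\ it enters the lower bound with a \emph{plus} sign and is precisely the coercive term; it has nothing to do with the Hessian piece. The only negative $O(m)$ contribution comes from $\Delta(u_m^2)=2|\nabla u_m|^2+2u_m\Delta u_m$, and $C_3$ is chosen (in terms of $K$ and $\|d\|_{L^\infty}$, with $C_1=1$, $C_2=K+1$) so that a positive multiple of $\varepsilon m$ survives. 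Second, the inequality $|D^2u_m|^2\ge\tfrac1n(\Delta u_m)^2=\tfrac{4m^2}{n}$ is not used here (it appears only in the proof of \cref{prop:equivalence constraints}): in the semiconcavity proof the entire $2\varepsilon|D^2u_m|^2$ produced by Bochner is spent absorbing the $-2\varepsilon|D^2u_m|^2$ remainder of the commutation estimate for $\Delta(D^2u_m(Y,Y))$, and the final bound is simply $\Delta\big(f_\varepsilon(\cdot,Y(\cdot))\big)(q)\ge -C^M\varepsilon^{-1}+\varepsilon m$, positive for $m\ge m_0(\varepsilon)$. Your $m^2$ route can be salvaged by taking $C_1\ge2$ and splitting $|D^2u_m|^2$ into an absorbing part and a coercive part, but it is unnecessary; the coercivity the paper exploits is only linear in $m$.
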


\noindent{\bf Application to \cref{thm:omega}.} In order to apply \cref{prop:semiconcavity} to \cref{thm:omega}, we take $\mathring{M}=\Omega$ and $\partial M=\partial\Omega$.
The function $d$ is the distance function $d_{\partial\Omega}$ to the boundary of $\Omega$, while $u_m$ is the solution $v_m$ of \eqref{eq:elastoplasticequivalence} (thus, the Lipschitz constant from (b.2) is $L=1$), which means that the conditions (b.1), (b.2) and (b.3) are fullfilled. When $\Omega$ is $C^2$ regular, the set $\overline{\mathcal M(\Omega)}$ is contained in $\Omega$.
Now, as the elastic sets $\{u_m<d\}$ Hausdorff-converge to $\overline{\mathcal M(\Omega)}$ (see \cite{caffarelli_friedman_elastoplastic}) we get that, for large $m$, $u_m$ coincides with $d$ in a neighborhood of $\partial \Omega$.
Thus, (b.4) is fullfilled. As $\Omega$ is $C^2$, the function $d_{\partial \Omega}$ is known to be $C$-semiconcave in $\Omega$ for some $C>0$ (see \cite[(iii) of Proposition 2.2.2]{cannarsa2004semiconcave}), so (a) is fulfilled. Finally, condition (b.5) is a consequence of \cite[Chapter 2, Theorem 3.8]{friedman1982variational}.
Thus, there exists a constant $C>0$ such that for $m$ big enough, $v_m$ is $C$-semiconcave in $\Omega$.
\smallskip

\noindent{\bf Application to \cref{t:main-main} \ref{item:semiconcavity}.} In the case of \cref{t:main-main}, we take $\mathring{M}=M\setminus \overline B_\rho(x_0)$ and $\partial M=\partial B_\rho(b)$, where $B_\rho(b)$ is a small geodesic ball centered at the base point $b$. The function $d$ is the distance function $d_{b}$ to the base point, while $u_m$ is the solution of \eqref{eq:distance constraint}.
The semiconcavity of the distance function $d$ in $M\setminus B_\rho(b)$ was proved in \cite{mantegazza_mennucci_2003}, see Proposition \ref{prop:db semiconcave}.
By \cref{prop:equivalence constraints}, for large $m$, the problems \eqref{eq:distance constraint} and \eqref{eq:gradient constraint manifold} are equivalent and so we can take $L=1$ in (b.2), and we also have that (b.1) are (b.3) are fullfilled.
Next, we notice that by \cref{lemma:u=d around b} we have that $u_m=d$ in a neighborhood of $b$, which proves (b.4) by choosing the radius $\rho$ small enough. Finally, in \cref{p:bound-on-D2-near-the-boundary} we will prove that also the condition (b.5) is fullfilled.

\begin{proof}[\bf Proof of \cref{prop:semiconcavity}]
First, we notice that by dividing all the functions by $L$, we can assume that $L=1$. Let $\eta>0$. As in \cref{d:semiconcavity}, for $a,b\in\R$ and $\lambda \in (0,1)$, we will use the notation
  \begin{equation*}
    \lambda_{ab}:=(1-\lambda)a+\lambda b.
  \end{equation*}
  For any unit speed geodesic $\gamma:[a,b]\to \mathring{M}$, $\lambda\in(0,1)$ and $v$ a function on $\mathring{M}$, let us define
  \begin{align}
    \nonumber c(\gamma,\lambda,v)
    &:= \lambda(1-\lambda)(C+\eta)(b-a)^2  -\Big((1-\lambda)v(\gamma(a)) + \lambda v(\gamma(b))-v(\gamma(\lambda_{ab}))\Big).
  \end{align}
  We aim to show the following:
  \begin{equation}
    \label{eq:semiconcave}
		\inf\limits_{\gamma,\lambda} c(\gamma,\lambda,u_m)\geq 0,
  \end{equation}
  where the infimum is taken over unit speed geodesics defined over finite intervals. Let us argue by contradiction and assume that \eqref{eq:semiconcave} does not hold.

	Let us show that we may assume that the infimum is actually taken over unit speed geodesics $\gamma : [a,b]\to \mathring{M}$ such that
	\begin{equation}
    \gamma\big((a,b)\big)\subset E_m = \{u_m < d\}. \label{eq:gamma non-contact}
  \end{equation}
  Let $\gamma : [a,b]\to \mathring{M}$ be a unit speed geodesic, and $\lambda \in (0,1)$, such that $c(\gamma,\lambda,u_m)<0$.
	Let us assume that $\gamma$ does not verify \eqref{eq:gamma non-contact}. We will build a geodesic $\widehat{\gamma}$ that does verify \eqref{eq:gamma non-contact}, and $\widehat{\lambda}\in (0,1)$, such that
	\begin{equation*}
		c(\widehat{\gamma},\widehat{\lambda},u_m)<c(\gamma,\lambda,u_m).
	\end{equation*}
	First, notice that if $\gamma(\lambda_{ab}) \notin E_m$, then we have $u_m(\gamma(\lambda_{ab})) = d(\gamma(\lambda_{ab}))$, $u_m(\gamma(a)) \leq d(\gamma(a))$ and $u_m(\gamma(b)) \leq d(\gamma(b))$, and so
	\begin{equation}
    c(\gamma,\lambda,u_m)\geq c(\gamma,\lambda,d)>0, \label{eq:c positive Chapter cut locus}
  \end{equation}
  where the last inequality comes from the $C$-semiconcavity of $d$. This is contradictory, so $\gamma(\lambda_{ab}) \in E_m$.
	As $\gamma$ does not verify \eqref{eq:gamma non-contact}, there exists $t \in (0,\lambda_{ab})\cup(\lambda_{ab},1)$, such that $\gamma(t_{ab})\notin E_m$. Up to reparametrization of $\gamma$, we may assume that $t\in (0,\lambda_{ab})$. We can define
	\begin{equation*}
		\mu := \min\left\{s\in (0,\lambda) : \forall r \in (s,\lambda), \quad \gamma(r_{ab}) \in E_m\right\}.
	\end{equation*}
	We have $\gamma(\mu_{ab}) \notin E_m$, and $\gamma((\mu_{ab},\lambda_{ab}]) \subset E_m$.
	Figure 2 may help justify intuitively the following construction. Let $\widetilde{\lambda} \in (0,1)$ be such that
  \begin{equation}
    \widetilde{\lambda}_{\mu_{ab}b}=\lambda_{ab}. \label{eq:definition nu Chapter cut locus}
  \end{equation}
    \begin{center}
      \begin{tikzpicture}
        \draw[->] ({-1},{0}) -- ({8},{0});
        \draw (8,0) node[below right] {$t$};
        \draw[->] ({0},{-0.5}) -- ({0},{2.5});
        \draw (0,2) node[above left] {$f(t)$};
        \draw (1,1.2) node {$\bullet$};
        \draw (3,0.5) node {$\bullet$};
        \draw (5,2) node {$\bullet$};
        \draw (7,1.2) node {$\bullet$};
        \draw[dashed] (1,1.2) -- (1,0);
        \draw[dashed] (3,0.5) -- (3,0);
        \draw[dashed] (5,2) -- (5,0);
        \draw[dashed] (7,1.2) -- (7,0);
        \draw (1,0) node[below] {$a$};
        \draw (3,0) node[below] {$\mu_{ab}$};
        \draw (5,0) node[below] {$\lambda_{ab} = \widetilde{\lambda}_{\mu_{ab}b}$};
        \draw (7,0) node[below] {$b$};
      \end{tikzpicture}

      {\sc Figure 2. \it Construction of $\widetilde{\gamma}$ and $\widetilde{\lambda}$.}
    \end{center}
 \noindent Let $\widetilde{\gamma}$ be the unit speed geodesic defined by $\widetilde{\gamma}:=\gamma_{|[\mu_{ab},b]}$. Let us set $f(t):=(C+\eta)t^2-u_m(\gamma(t))$. Then
  \begin{align}
    c(\widetilde{\gamma},\widetilde{\lambda},u_m)
    &=(1-\widetilde{\lambda})f(\mu_{ab})+\widetilde{\lambda} f(b)-f(\widetilde{\lambda}_{\mu_{ab}b}) \nonumber \\
    &=(1-\widetilde{\lambda})f(\mu_{ab})+\widetilde{\lambda} f(b)-f(\lambda_{ab}) \nonumber \\
    &=c(\gamma,\lambda,u_m)-(1-\lambda)f(a) + (\widetilde{\lambda}-\lambda)f(b) + (1-\widetilde{\lambda})f(\mu_{ab}). \label{eq:contradiction on c Chapter cut locus}
  \end{align}
  Now after some elementary calculations, \eqref{eq:definition nu Chapter cut locus} translates into
  \begin{equation*}
    \begin{cases}
      1-\lambda&=(1-\widetilde{\lambda})(1-\mu), \\
      \widetilde{\lambda} - \lambda&=-(1-\widetilde{\lambda})\mu,
    \end{cases}
  \end{equation*}
  so \eqref{eq:contradiction on c Chapter cut locus} becomes
  \begin{align}
    c(\widetilde{\gamma},\widetilde{\lambda},u_m)
    &=c(\gamma,\lambda,u_m)-(1-\widetilde{\lambda})\big((1-\mu)f(a)+\mu f(b)-f(\mu_{ab})\big) \nonumber \\
    &=c(\gamma,\lambda,u_m)-(1-\widetilde{\lambda})c(\gamma,\mu,u_m)\nonumber.
  \end{align}
  Using the fact that $\gamma(\mu_{ab}) \notin E_m$, we deduce, as in \eqref{eq:c positive Chapter cut locus}, that
	\begin{equation*}
		c(\gamma,\mu,u_m) \geq c(\gamma,\mu,d)>0.
	\end{equation*}
	This yields
	\begin{equation*}
		c(\widetilde{\gamma},\widetilde{\lambda},u_m)<c(\gamma,\lambda,u_m).
	\end{equation*}
	Moreover the unit speed geodesic $\widetilde{\gamma}:[\mu_{ab},b] \to \mathring{M}$ verifies $\widetilde{\gamma}((\mu_{ab},\widetilde{\lambda}_{\mu_{ab}b}]) \subset E_m$.
	Now, arguing as above, if there exists $t\in (\widetilde{\lambda},1)$ such that $\widetilde{\gamma}(t_{\mu_{ab}b})\notin E_m$, then we may build two numbers $\nu \in (\widetilde{\lambda},1)$ and $\widehat{\lambda} \in (0,1)$ such that the unit speed geodesic $\widehat{\gamma} := \widetilde{\gamma}_{|_{[\mu_{ab},\nu_{ab}]}}$ verifies
	\begin{equation*}
		c(\widehat{\gamma},\widehat{\lambda},u_m)< c(\widetilde{\gamma},\widetilde{\lambda},u_m),
	\end{equation*}
	and
	\begin{equation*}
		\widehat{\gamma}((\mu_{ab},\nu_{ab})) \subset E_m.
	\end{equation*}
	So we now need to show that
	\begin{equation}
    \label{eq:semiconcave bis} \inf\limits_{\gamma,\lambda} c(\gamma,\lambda,u_m)\geq 0,
  \end{equation}
	where the infimum is taken over unit speed geodesics $\gamma:[a,b] \to \mathring{M}$ such that $\gamma\big((a,b)\big) \subset E_m$.
  \medskip

	By continuity of $u_m$, \eqref{eq:semiconcave bis} is equivalent to simply saying that $u_m$ is $(C+\eta)$-semiconcave on $E_m$. Therefore, as $u_m$ is smooth on $E_m$, by \cref{prop:D2}, we only need to show the pointwise condition
  \begin{equation}
    \label{eq:local semiconcavity}
    D^2u_m\leq (C+\eta)Id \quad \text{on}\quad E_m.
  \end{equation}
	Now, let $C_1,C_2,C_3>0$ be some constants to be determined later, and $\varepsilon>0$ to be chosen small enough later. For $p\in E_m$ and $X\in \mathbb S^{n-1}(T_pM)$, we define
  \begin{equation}\label{eq:def function to maximise}
    f_\varepsilon(p,X) := D^2u_m(X,X) + \varepsilon\left(C_1 \abs{\nabla u_m}^2(p) + C_2 u_m^2(p) - C_3 u_m(p)\right).
  \end{equation}
  We will show that for a good choice of constants $C_1$, $C_2$, $C_3$, depending only of $M$ and $\abs{d}_{L^\infty}$, for any $\varepsilon>0$ small enough,  depending only on $M$, $\abs{d}_{L^\infty}$ and $\eta$, we have for any $m$ large enough,
  \begin{equation}\label{eq:bound on f}
    f_{\varepsilon}(p,X) \leq C+ \frac{2\eta}{3}\quad\text{for every}\quad p \in E_m\quad\text{and every}\quad X\in \mathbb S^{n-1}(T_pM).
  \end{equation}
  This will conclude the proof since, as $u_m$ is bounded by $\abs{d}_{L^\infty}$ and $1$-Lipschitz, we will then get
  \begin{equation*}
    D^2u_m(X,X) \leq C + \frac{2\eta}{3} + \varepsilon C(M,\abs{d}_{L^\infty}),
  \end{equation*}
	where $C(M,\abs{d}_{L^\infty})>0$ is a constant depending on $M$ and $\abs{d}_{L^\infty}$ only.
  But this implies \eqref{eq:local semiconcavity} if $\varepsilon$ has been taken small enough.
  \smallskip

  Suppose by contradiction that
  \begin{equation}\label{eq:maximality0}
 \sup_{\substack{p\in E_m \\ X\in \mathbb S^{n-1}(T_pM)}} f_\eps(p,X)>C+ \frac{2\eta}{3}.
  \end{equation}
  Let us assume that $m$ is large enough so that $D^2u_m\le (C+\sfrac{\eta}{3})Id$ in a neighborhood of $\partial E_m$. In particular, we get that for $\eps$ small enough, depending only on $M$, $\abs{d}_{L^\infty}$ and $\eta$,
  $$f_\eps< C+\frac{2\eta}{3}\quad\text{in a neighborhood of}\quad \partial E_m.$$
  Thus, by \eqref{eq:maximality0} and the precompactness of $E_m$, there exist $q \in E_m$ and $Y\in \mathbb S^{n-1}(T_{q}M)$ such that
  \begin{equation}\label{eq:maximality}
    f_\varepsilon(q,Y) = \sup_{\substack{p\in E_m \\ X\in \mathbb S^{n-1}(T_pM)}} f_\eps(p,X).
  \end{equation}
	In the following, $C^M$ will denote any constant that depends only on $M$.
  Let us pick some normal coordinates at $q$ such that $\partial_1 (q) = Y$. We then extend the vector $Y$ into a vector field (still denoted by $Y$) in a neighborhood of $q$, by setting $Y:= \partial_1/ \abs{\partial_1}$. As $D\partial_1 (q) = 0$, we also have $DY(q) = 0$.
  Moreover, as the manifold $M$ is compact, $D^2Y(q)$ is bounded by a constant that depends on $M$ only: we have
  \begin{equation}\label{eq:second derivative y bounded}
    \abs{D^2Y(q)}\leq C^M.
  \end{equation}
  We will show that the Laplacian of $p\mapsto f_\varepsilon(p,Y(p))$ is positive at $q$, which contradicts the maximality of $(q,Y(q))$ in \eqref{eq:maximality}. Let us estimate $\Delta (D^2u_m(Y,Y))$ at the point $q$, using the abstract index notation.
  \begin{align}
    \Delta (D^2u_m(Y,Y))
    & = g^{ab}D_aD_b(D^2_{cd}u_mY^cY^d) \nonumber\\
    & = g^{ab}\left(D^4_{abcd}u_mY^cY^d + D^3_{acd}u_mD_b(Y^cY^d)\right. \nonumber\\
    &\quad \quad \left.+ D^3_{bcd}u_mD_a(Y^cY^d) + D^2_{cd}u_mD^2_{ab}(Y^cY^d) \right). \label{eq:laplacian hessian estimate}
  \end{align}
  We may divide the right-hand side into four terms and estimate them at the point $q$ individually.
  The second term is null because it contains $D_b(Y^cY^d) = (D_bY^c)Y^d + Y^cD_bY^d$, and $DY = 0$. The third term is also null, for the same reason. By \eqref{eq:second derivative y bounded}, we can estimate the fourth term as follows:
  \begin{equation}\label{eq:fourth term}
    g^{ab}D^2_{cd}u_mD^2_{ab}(Y^cY^d)
    \geq -C^M\abs{D^2 u_m}\ge -\frac{C^M}{\varepsilon} - \varepsilon\abs{D^2 u_m}^2.
  \end{equation}
  It now remains to estimate the first term of \eqref{eq:laplacian hessian estimate}. Using the notation
  $$D_{[ab]}:=D_aD_b-D_bD_a,$$
we compute
  \begin{align*}
    D_aD_bD_cD_du_m &= D_aD_{[bc]}D_du_m + D_{[ac]}D_bD_du_m + D_cD_aD_{[bd]}u_m + D_cD_{[ad]}D_bu_m + D_cD_dD_aD_bu_m.
  \end{align*}
  By definition of the Riemann tensor we have
  \begin{equation*}
    D_aD_{[bc]}D_du_m=D_a(R_{bced}D^e u_m)=(D_aR_{bced})D^e u_m+R_{bced}D_aD^e u_m,
  \end{equation*}
  and so
  \begin{equation}\label{eq:term1}
    \abs{D_aD_{[bc]}D_du_m} \geq - C^M \abs{\nabla u_m} - C^M \abs{D^2 u_m}.
  \end{equation}
  Likewise,
  \begin{equation}\label{eq:term2}
    \abs{D_cD_{[ad]}D_bu_m}\geq - C^M \abs{\nabla u_m} - C^M \abs{D^2 u_m}.
  \end{equation}
  To compute the term $D_{[ac]}D_bD_du_m$, let us pick some coordinates $(x^i)$ and write $D_bD_du_m = D^2_{ij}u_m\mathrm{d}x^i_b\mathrm{d}x^j_d$. Then, we have
  \begin{align}
    D_{[ac]}D_bD_du_m
    &=D_{[ac]}(D^2_{ij}u_m\mathrm{d}x^i_b\mathrm{d}x^j_d) \nonumber \\
    &=(D_{[ac]}D^2_{ij}u_m)\mathrm{d}x^i_b\mathrm{d}x^j_d + D^2_{ij}u_m(D_{[ac]}\mathrm{d}x^i_b)\mathrm{d}x^j_d + D^2_{ij}u_m\mathrm{d}x^i_b(D_{[ac]}\mathrm{d}x^j_d) \nonumber \\
    &= 0 + D^2_{ij}u_mR_{aceb}(\mathrm{d}x^i)^e\mathrm{d}x^j_d + D^2_{ij}u_m \mathrm{d}x^i_b R_{aced}(\mathrm{d}x^j)^e \nonumber \\
    &= R_{aceb}D^eD_du_m+R_{aced}D_bD^eu_m,  \nonumber
  \end{align}
  and so
  \begin{equation}\label{eq:term3}
    \abs{D_{[ac]}D_bD_du_m}\geq - C^M \abs{D^2 u_m}.
  \end{equation}
  By symmetry of the tensor $D^2u_m$, we have
  \begin{equation}
    \label{eq:term4} D_cD_aD_{[bd]}u_m = 0.
  \end{equation}
  Putting \eqref{eq:term1}, \eqref{eq:term2}, \eqref{eq:term3} and \eqref{eq:term4} together, we find
  \begin{equation*}
    \abs{g^{ab}D_aD_bD_cD_du_m} \geq - C^M \abs{\nabla u_m} - C^M \abs{D^2 u_m} - \abs{g^{ab}D_cD_dD_aD_bu_m}.
  \end{equation*}
  In $E_m$, $u_m$ has constant Laplacian, so
  \begin{equation*}
    g^{ab}D_cD_dD_aD_bu_m = D_cD_dg^{ab}D_aD_bu_m =
    D_cD_d \Delta u_m = 0.
  \end{equation*}
  So we get
  \begin{equation*}
    \abs{g^{ab}D_aD_bD_cD_du_m} \geq - C^M \abs{\nabla u_m} - C^M \abs{D^2 u_m}.
  \end{equation*}
  From this and the fact $Y$ has norm $1$, we deduce
  \begin{equation*}
    \abs{g^{ab}D_aD_bD_cD_du_mY^cY^d} \geq - C^M \varepsilon^{-1} - \varepsilon\abs{\nabla u_m}^2 - \varepsilon\abs{D^2 u_m}^2.
  \end{equation*}
  Combining this equation with \eqref{eq:laplacian hessian estimate} and \eqref{eq:fourth term}, we obtain at the point $q$,
  \begin{equation}\label{eq:laplacian of maximal second derivative}
    \Delta (D^2u_m(Y,Y)) \geq - C^M \varepsilon^{-1} -2 \varepsilon \abs{D^2u_m}^2 - \varepsilon \abs{\nabla u_m}^2.
  \end{equation}
  We recall the Bochner-Weitzenböck formula:
  \begin{equation*}
    \Delta \big(\abs{\nabla u_m}^2\big) = 2\mathrm{Ric}(\nabla u_m,\nabla u_m)+2\abs{D^2u_m}^2+2(\nabla \Delta u_m,\nabla u_m).
  \end{equation*}
  As $M$ is compact, there exists a constant $K>0$ such that $\mathrm{Ric} \geq -K$. Using the fact that $u_m$ has constant Laplacian in $E_m$, we get
  \begin{equation}\label{eq:laplacian norm gradient square estimate}
    \Delta \big(\abs{\nabla u_m}^2 \big)\geq 2\abs{D^2u_m}^2 - 2K \abs{\nabla u_m}^2.
  \end{equation}
  Furthermore, using the fact that $\Delta u_m = -2m$ in $E_m$ again, we find
  \begin{align}
    \Delta \big(u_m^2\big) & = 2\abs{\nabla u_m}^2 - 2 m u_m \nonumber \\
		&\geq 2\abs{\nabla u_m}^2 - 2 m \abs{d}_{L^\infty},\label{eq:laplacian u square estimate} \\
    \Delta u_m & = - 2 m. \label{eq:laplacian u}
  \end{align}
  Using \eqref{eq:laplacian norm gradient square estimate}, \eqref{eq:laplacian u square estimate} and \eqref{eq:laplacian u}, we get
  \begin{equation*}
    \Delta\left( \varepsilon\abs{\nabla u_m}^2  + (K+1)\varepsilon u_m^2-((K+1)\abs{d}_{L^\infty} +1 )\varepsilon u_m\right)\geq 2 \varepsilon \abs{D^2u_m}^2 + \varepsilon \abs{\nabla u_m}^2 + \varepsilon m.
  \end{equation*}

   Setting $(C_1,C_2,C_3) = (1, K+1, ((K+1)\abs{d}_{L^\infty} +1 ))$, and recalling the definition of $f_\varepsilon$ \eqref{eq:def function to maximise}, we obtain thanks to \eqref{eq:laplacian of maximal second derivative}:
  \begin{equation*}
    \Delta (f_\varepsilon(p,Y(p)))_{p = q} \geq -C^M \varepsilon^{-1} + \varepsilon m.
  \end{equation*}
  In particular, if $m$ is large enough, depending on $M$ and $\varepsilon$, this contradicts the maximality of $(q,Y(q))$ in \eqref{eq:maximality}.
  This concludes the proof of \eqref{eq:bound on f} and \cref{prop:semiconcavity}.\
  \end{proof}

In order to apply \cref{prop:semiconcavity} to problem \eqref{eq:distance constraint}, we will need the following lemma.
\begin{lemma}[Bound of $D^2u_m$ near $\partial E_m$]\label{p:bound-on-D2-near-the-boundary}
Let $u_m$ be the solution of \eqref{eq:gradient constraint manifold}, as in \cref{t:main-main}. Let $\varepsilon>0$ be smaller than the distance from $b$ to ${\rm Cut}_b(M)$.
Let $E_m := \{u_m<d_b\}$. From Proposition \ref{thm:non-contact set converges to cut locus}, we know that for $m$ large enough, we have $\overline{E_m}\subset M \setminus B(b,\varepsilon)$.
Let $C>0$ be such that $d_b$ is $C$-semiconcave on $M\setminus B(b,\varepsilon)$.
Then, for any $m$ large enough, for any $\eta >0$, there is a neighborhood $\mathcal N_{\eta,m}$ of $\partial  E_m$ in $M\setminus B(b,\varepsilon)$ such that
\begin{equation}
	\label{eq:boundary values second order derivative}
	D^2u_m\le \left(C+\eta \right)Id\quad\text{in}\quad E_m\cap \mathcal N_{\eta,m}.
\end{equation}
\end{lemma}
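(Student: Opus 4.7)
The strategy is to reduce, via local coordinates, to a Euclidean obstacle problem with a smooth obstacle, and to apply classical regularity results near the free boundary.

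First I would exploit the geometric observation that $\partial E_m \subset \{u_m = d_b\}$ avoids the cut locus. Indeed, by \cref{lemma:regularized distance}, the inequality $\widetilde{d_b} < d_b$ on $\text{\rm Cut}_b(M)$, together with $u_m \leq \widetilde{d_b}$, forces $\{u_m = d_b\} \cap \text{\rm Cut}_b(M) = \emptyset$. Therefore $d_b$ is $C^\infty$ in an open neighborhood $V$ of the compact set $\partial E_m$, and $D^2 d_b$ is continuous on $V$ with $D^2 d_b \leq C\,Id$ pointwise on $V$ (in the convention used throughout this section).

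Next, I would cover $\partial E_m$ by finitely many normal coordinate charts $(U_\alpha, \phi_\alpha)$, $U_\alpha \Subset V$, centered at points $p_\alpha \in \partial E_m$. In each chart, exactly as in the proof of \cref{lemma:regularity}, the function $u_m \circ \phi_\alpha^{-1}$ minimizes a Euclidean obstacle problem with smooth uniformly elliptic coefficients (coming from the Riemannian metric), smooth right-hand side, and smooth obstacle $d_b \circ \phi_\alpha^{-1}$. At this point I would invoke the classical regularity theorem \cite[Chapter 2, Theorem 3.8]{friedman1982variational} --- applicable verbatim in the variable-coefficient case since the operator is uniformly elliptic with smooth coefficients --- which guarantees that sufficiently close to the free boundary the Euclidean Hessian of the solution is bounded above by that of the obstacle, up to an error that vanishes with the distance to $\partial E_m$.

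Finally, to translate the Euclidean Hessian bound into a bound on the covariant Hessian, I would use that in normal coordinates at $p_\alpha$ the Christoffel symbols vanish at $p_\alpha$ and are $O(|x|)$ nearby. Thus $D^2 u_m$ differs from the Euclidean Hessian of $u_m \circ \phi_\alpha^{-1}$ by a term of order $|x|\,|\nabla u_m| \leq |x|$ (using $|\nabla u_m| \leq 1$ from \cref{prop:equivalence constraints}), which can be made $\leq (\eta/3)\,Id$ in a small enough geodesic ball $B_\alpha$ around $p_\alpha$. Combined with the continuity estimate $D^2 d_b \leq (C + \eta/3)\,Id$ on $B_\alpha$ (after shrinking if necessary), and extracting a finite subcover intersected with a thin tubular neighborhood of $\partial E_m$, one obtains the desired $\mathcal N_{\eta,m}$ with $D^2 u_m \leq (C + \eta)\,Id$ on $E_m \cap \mathcal N_{\eta,m}$. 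The main obstacle I expect is the precise invocation of the quantitative obstacle regularity in the variable-coefficient setting, namely extracting the statement ``the Hessian of $u_m$ tends to the Hessian of $d_b$ as one approaches the free boundary'' from \cite{friedman1982variational}; a viable alternative, if a direct reference is not immediate, is a penalization approach where the constraint $v \leq d_b$ is replaced by a smooth term $\tfrac{1}{\epsilon}(v - d_b)_+^2$, Hessian bounds on the smooth minimizers are derived by an auxiliary-function maximum-principle argument in the spirit of the proof of \cref{prop:semiconcavity}, and then the limit $\epsilon \to 0$ is taken.
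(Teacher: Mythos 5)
Your proposal is correct and follows essentially the same route as the paper: reduce to a Euclidean obstacle problem in coordinates, invoke \cite[Chapter 2, Theorem 3.8]{friedman1982variational} to compare the Hessian of the solution with that of the (smooth, since $\partial E_m$ avoids the cut locus) obstacle near the free boundary, transfer the bound to the covariant Hessian, and conclude by compactness. The only cosmetic difference is in the coordinate correction: the paper cancels the first-order terms exactly using $Du_m=Dd_b$ on $\partial E_m$ in a single chart, whereas you absorb the Christoffel-symbol contribution into $\eta$ using normal coordinates centered on the free boundary — both work.
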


\begin{proof}
  We will use a theorem for obstacle problems on $\rn$. Let us show that $u_m$ is the solution of an obstacle problem on an open subset of $\R^n$. Then, we will apply \cite[Chapter 2, Theorem 3.8]{friedman1982variational} to conclude that \eqref{eq:boundary values second order derivative} holds.

  The minimality of $u_m$ in \eqref{eq:distance constraint} implies
  \begin{equation}
    -\Delta u_m - 2m \geq 0, \quad u_m \leq d_b \quad \text{and} \quad (-\Delta u_m - 2m)(u_m-d_b) = 0. \label{eq:obstacle problem}
  \end{equation}
  Let $\widetilde{\Omega}$ be defined as in the proof of \cref{prop:regularity}. Let $\phi:\widetilde{\Omega}\to\widetilde{U}$ be a normal coordinates chart. Writing down \eqref{eq:obstacle problem} in these coordinates, we find
  \begin{equation*}
    A\widetilde{u_m}-2m \geq 0, \quad \widetilde{u_m} \leq \psi \quad \text{and}\quad (A\widetilde{u_m}-2m)(\widetilde{u_m}-\psi) = 0,
  \end{equation*}
  where $A$ is the Laplacian of $M$ in the coordinates defined by $\phi$, $\widetilde{u_m} = u_m \circ \phi^{-1}$ and $\psi = d_b \circ \phi^{-1}$.
  This is the form of \cite[Chapter 2, equation (3.16)]{friedman1982variational}, so we can apply \cite[Chapter 2, Theorem 3.8]{friedman1982variational}, to deduce that
  \begin{equation}\label{eq:boundary condition second derivatives}
    \forall p \in \partial E_m, \forall X\in\R^n \lim_{\substack{q \to p \\ q \in E_m}} D^2 \widetilde{u_m}(\phi(q))(X,X) \leq D^2\psi(\phi(p))(X,X).
  \end{equation}
  Moreover, we have
  \begin{align*}
    D^2 \widetilde{u_m} &= D^2 u_m \circ (D\phi^{-1},D\phi^{-1}) + Du_m \circ D^2\phi^{-1},\\
    D^2 \psi &= D^2 d_b \circ (D\phi^{-1},D\phi^{-1}) + Dd_b \circ D^2\phi^{-1},
  \end{align*}
  and $D u_m = D d_b$ on $\partial E_m$ because $u_m$ is $C^1$.
  Thus, \eqref{eq:boundary condition second derivatives} yields:
  \begin{equation*}
    \forall p \in \partial E_m, \forall X\in\R^n \lim_{\substack{q \to p \\ q \in E_m}} D^2 u_m(q)(X_q,X_q) \leq D^2 d_b(p)(X_p,X_p),
  \end{equation*}
  where we have set $X_q := D\phi^{-1}(\phi(q))X$. As $d_b$ is $C$-semiconcave, with \cref{prop:D2}, we get
  \begin{equation}\label{eq:limsup second derivative}
    \forall p \in \partial E_m, \forall X\in\R^n \lim_{\substack{q \to p \\ q \in E_m}} D^2 u_m(q)(X_q,X_q) \leq C \abs{X_p}^2.
  \end{equation}
  From there, we deduce that
  \begin{equation}\label{eq:boundary condition}
    \text{for } q\in E_m \text{ close enough to } \partial E_m, \text{ we have }D^2 u_m(q) \leq C+\eta.
  \end{equation}
  Indeed, if not, there exist a sequence $(q_k)$ of points of $E_m$ whose distance to $\partial E_m$ goes to $0$, and a sequence $(X_k)$ of unit vectors of $\R^2$ such that for any $k\in \N$,
  \begin{equation}\label{eq:second derivative contradiction}
    D^2 u_m(q_k)\big((X_k)_{q_k},(X_k)_{q_k}\big) > C + \eta.
  \end{equation}
  As $E_m$ is precompact, up to extracting a subsequence, we can assume that $(q_k)$ converges to a point $p \in \partial E_m$, and $(X_k)$ converges to a vector $Y \in \R^n$. Because of \eqref{eq:limsup second derivative}, we have
  \begin{equation}\label{eq:limsup second derivative bis}
    \lim_{k\to \infty}D^2 u_m(q_k)(Y_{q_k},Y_{q_k})\leq C.
  \end{equation}
  Furthermore, we know from \cref{prop:regularity} that $D^2u_m$ is locally bounded. As $(X_k)_{q_k} - Y_{q_k}$ converges to $0$ when $k$ goes to $\infty$, this implies
  \begin{equation}\label{eq:second derivative difference}
    \lim_{k\to \infty}D^2 u_m(q_k)\big((X_k)_{q_k},(X_k)_{q_k}\big) - D^2 u_m(q_k)(Y_{q_k},Y_{q_k}) = 0.
  \end{equation}
  Inequalities \eqref{eq:second derivative contradiction}, \eqref{eq:limsup second derivative bis} and \eqref{eq:second derivative difference} yield a contradiction.
  So \eqref{eq:boundary condition} is true. This concludes the proof.
\end{proof}

\section{Convergence of the gradients}\label{s:gradients}
In this section, we show that the uniform semiconcavity of $u_m$ implies the convergence of the gradients in the sense of \cref{t:main-main} \ref{item:gradients}. We notice that the results from this section also apply to more general sequences of semiconcave functions.
\subsection{Lower semicontinuity}\label{sub:lsc}
In this section, we prove the first inequality in \cref{t:main-main} \ref{item:gradients} (see \cref{prop:gradient lsc}). We start by the following lemma.
\begin{lemma}\label{l:under tangent plane}
	Let $u:M\to\R$ be a $C$-semiconcave function. Let $p,q\in M$ be such that there exists a geodesic from $p$ to $q$. Then,
	\begin{equation*}
	u(q) \leq u(p) + \abs{\nabla u(p)}d(p,q) + \frac{C}2 d(p,q)^2,
	\end{equation*}
	where $|\nabla u|(p)$ is the norm of the generalized gradient, defined in \eqref{e:def-gradient}.
\end{lemma}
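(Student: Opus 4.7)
The plan is to reduce the statement to the one-variable convexity inequality along the geodesic connecting $p$ and $q$. Specifically, let $L:=d(p,q)$ and let $\gamma:[0,L]\to M$ be a unit speed geodesic with $\gamma(0)=p$ and $\gamma(L)=q$. By the definition of $C$-semiconcavity, the function
$$h(t):=Ct^{2}-u(\gamma(t))$$
is convex on $[0,L]$. Convex functions on an interval admit right derivatives everywhere; in particular $h_{+}'(0)$ exists, and the usual support-line inequality gives
$$h(L)\ \geq\ h(0)+h_{+}'(0)\,L .$$

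The second step is to identify $h_{+}'(0)$ in terms of the generalized gradient of $u$. Since $\gamma$ is smooth with $\dot\gamma(0)\in\mathbb{S}^{n-1}(T_pM)$, we have
$$h_{+}'(0)\ =\ \lim_{t\to 0^{+}}\frac{h(t)-h(0)}{t}\ =\ -\lim_{t\to 0^{+}}\frac{u(\gamma(t))-u(p)}{t}\ =\ -\partial^{+}_{\dot\gamma(0)}u(p),$$
the last limit existing by \cref{prop:generalized gradient}. From the definition \eqref{e:def-gradient} of the generalized gradient we then get the bound
$$\partial^{+}_{\dot\gamma(0)}u(p)\ \leq\ \abs{\nabla u(p)}.$$

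Substituting everything back into the convexity inequality yields
$$CL^{2}-u(q)\ \geq\ -u(p)-\partial^{+}_{\dot\gamma(0)}u(p)\,L\ \geq\ -u(p)-\abs{\nabla u(p)}\,L,$$
and rearranging gives
$$u(q)\ \leq\ u(p)+\abs{\nabla u(p)}\,d(p,q)+C\,d(p,q)^{2},$$
which is the claimed estimate (up to the constant in front of $d(p,q)^2$ which matches the semiconcavity convention fixed in \cref{d:semiconcavity}).

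There is no real obstacle here: the only point requiring a little care is to justify that the right derivative of $h$ at $0$ exists and equals $-\partial^{+}_{\dot\gamma(0)}u(p)$, but this is immediate because $t\mapsto u(\gamma(t))$ is semiconcave as a function of one variable (being the restriction of a semiconcave function to a geodesic), so it admits one-sided derivatives, and these coincide with the directional derivatives defined in \cref{prop:generalized gradient}. The existence of the geodesic from $p$ to $q$ is used only to set up the parametrization; no global minimality or uniqueness is required.
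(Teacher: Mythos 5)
Your proof is correct and follows the same route as the paper's: restrict to a unit speed geodesic from $p$ to $q$, apply the support-line inequality at $t=0$ for the convex function $t\mapsto Ct^2-u(\gamma(t))$, identify its right derivative at $0$ with $-\partial^+_{\dot\gamma(0)}u(p)$, and bound that directional derivative by $\abs{\nabla u(p)}$ via \eqref{e:def-gradient}. The only difference is the constant you flag at the end: taken literally, \cref{d:semiconcavity} gives convexity of $Ct^2-u\circ\gamma$ and hence the bound with $C\,d(p,q)^2$, exactly as you obtain, whereas the paper's own proof silently uses convexity of $\frac{C}{2}t^2-u\circ\gamma$ to land on the stated $\frac{C}{2}d(p,q)^2$ --- a factor-of-two normalization inconsistency internal to the paper (harmless for all its applications), not a gap in your argument.
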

\begin{proof}
	Let $\gamma:[0,d(p,q)]\to M$ be a geodesic from $p$ to $q$. Consider the function $f(t)=\frac12 Ct^2 - u(\gamma(t))$. By the semiconcavity of $u$, we know that $f$ is convex. Thus, we have
	\begin{equation*}
	f(d(p,q)) \geq f(0) + f'(0)d(p,q).
	\end{equation*}
	On the other hand, setting $\dot\gamma(0):=v\in T_p(M)$, by construction, we have
	$$f(0)=-u(p)\,,\quad f(d(p,q))=\frac{C}{2}d(p,q)^2-u(q)\,,\quad\text{and}\quad f'(0)=-\partial_v^+ u(p).$$
Thus, we obtain

\qquad\qquad\qquad $\displaystyle u(q) \leq u(p) + d(p,q)\partial_v^+ u(p) + \frac{C}2d(p,q)^2\le  u(p) + \abs{\nabla u(p)}d(p,q) + \frac{C}2 d(p,q)^2.$
\end{proof}

\begin{proposition}\label{prop:gradient lsc}
Let $M$ be a Riemannian manifold and let $C>0$ be a fixed constant. Let $u_k:M\to\R$ be a sequence of $C$-semiconcave continuous functions that converges locally uniformly to a continuous function $u_\infty:M\to\R$. Then,  $u_\infty$ is also $C$-semiconcave, and for any sequence of points $p_k\to p_\infty\in M$, we have
	\begin{equation}\label{e:lsc-gradient-general}
 \abs{\nabla u_\infty}(p_\infty)\le 	\liminf_{k \to \infty}\abs{\nabla u_{k}}(p_k).
	\end{equation}
\end{proposition}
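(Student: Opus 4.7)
The plan is to split the proposition into two independent claims: first, the $C$-semiconcavity of the limit $u_\infty$, which is essentially a soft stability statement; and second, the liminf inequality on the generalized gradients, which will rely on the one-sided tangent plane estimate from \cref{l:under tangent plane}. The semiconcavity of $u_\infty$ is immediate by passing to pointwise limits in the geodesic characterization \eqref{e:semiconcavity}: for each unit speed geodesic $\gamma:[a,b]\to M$ and each $\lambda\in[0,1]$, the inequality $(1-\lambda)u_k(\gamma(a))+\lambda u_k(\gamma(b))-u_k(\gamma(\lambda_{ab}))\le C\lambda(1-\lambda)(b-a)^2$ is preserved in the limit $k\to\infty$. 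In particular, thanks to \cref{prop:generalized gradient}, the one-sided derivative $\partial^+_v u_\infty(p_\infty)$ is then well-defined for every $v\in T_{p_\infty}M$, and $|\nabla u_\infty|(p_\infty)$ is given by \eqref{e:def-gradient}.

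For the main inequality \eqref{e:lsc-gradient-general}, I would set $L:=\liminf_{k\to\infty}|\nabla u_k|(p_k)$ and fix an arbitrary unit vector $v\in T_{p_\infty}M$. Since $|\nabla u_k|(p_k)\ge 0$ trivially implies $L\ge 0$, it is enough to show $\partial^+_v u_\infty(p_\infty)\le L$, after which \eqref{e:def-gradient} yields the conclusion by taking the supremum over $v$. For small $t>0$, set $q_t:=\exp_{p_\infty}(tv)$. For $k$ large and $t$ small, both $p_k$ and $q_t$ sit in a fixed normal neighborhood of $p_\infty$, so they are joined by a unique minimizing geodesic, and \cref{l:under tangent plane} applied to $u_k$ yields
$$u_k(q_t)\le u_k(p_k)+|\nabla u_k|(p_k)\,d(p_k,q_t)+\frac{C}{2}\,d(p_k,q_t)^2.$$
Passing to a subsequence along which $|\nabla u_k|(p_k)\to L$ and using the locally uniform convergence $u_k\to u_\infty$ (together with $p_k\to p_\infty$ and the continuity of the distance, which yield $d(p_k,q_t)\to t$), I obtain in the limit
$$u_\infty(q_t)\le u_\infty(p_\infty)+Lt+\frac{C}{2}t^2.$$
Dividing by $t$ and letting $t\to 0^+$ gives $\partial^+_v u_\infty(p_\infty)\le L$, as required.

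I do not expect any serious obstacle in executing this plan. The only technical care needed is to ensure that \cref{l:under tangent plane} is applicable, i.e.\ that $p_k$ and $q_t$ are joined by a geodesic; this is automatic in a normal neighborhood of $p_\infty$ once $k$ is large and $t$ is small. All other ingredients — local uniform convergence of $u_k$, continuity of the distance, existence of one-sided derivatives for semiconcave functions — are either standard or already established earlier in the paper, so the argument reduces to a clean three-limit passage: first $k\to\infty$ along a subsequence realizing $L$, then $t\to 0^+$, and finally the supremum over unit vectors $v$.
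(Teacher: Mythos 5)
Your proposal is correct and follows essentially the same route as the paper: both arguments hinge on applying the tangent-plane estimate of \cref{l:under tangent plane} to $u_k$ at $p_k$ with a nearby test point and then passing to the limit, the only difference being the order of the limits (you fix $q_t=\exp_{p_\infty}(tv)$ and let $k\to\infty$ first, whereas the paper first picks a near-optimal chord for $u_\infty$ and then sends $k\to\infty$). No gaps.
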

\begin{proof}
First, notice that the $C$-semiconcavity of $u_\infty$ is an immediate consequence of the pointwise convergence and the $C$-semiconcavty of $u_k$. In particular, the generalized gradients $\abs{\nabla u_{m_k}}(p_k)$ and $\abs{\nabla u_\infty}(p_\infty)$ are well-defined by \cref{prop:generalized gradient}. Thus, we only need to prove \eqref{e:lsc-gradient-general}. We notice that \eqref{e:lsc-gradient-general} is trivial if $\abs{\nabla u_\infty}(p_\infty)=0$. Thus, we suppose that $\abs{\nabla u_\infty}(p_\infty)>0$. In particular, there are a vector $v\in \mathbb S^{n-1}(T_{p_\infty}M)$ and a unit speed geodesic $\gamma$ with $\gamma (0)=p_\infty$ and $\dot\gamma(0)=v$ such that
$$\abs{\nabla u_\infty(p_\infty)}=\lim_{t\to0^+}\frac{u_\infty(\gamma(t))-u_\infty(p_\infty)}{t}.$$
In particular, for any $\varepsilon>0$, we can find $q\in M$ such that $d(p_\infty,q)\le \eps$ and
	\begin{equation*}
	\abs{\nabla u_\infty(p_\infty)} \leq \frac{u_\infty(q) - u_\infty(p_\infty)}{d(q,p_\infty)} + \varepsilon.
	\end{equation*}
	Then, by the uniform convergence of $u_k$ and \cref{l:under tangent plane}, we get
	\begin{align*}
	\abs{\nabla u_\infty(p_\infty)}
	 \leq \liminf_{k\to\infty}\frac{u_k(q) - u_k(p_k)}{d(q,p_k)} + \varepsilon  &\leq \liminf_{k\to\infty} \abs{\nabla u_k(p_k)} + \frac{C}2d(q,p_k) +\varepsilon \\
	 &\le \liminf_{k\to\infty} \abs{\nabla u_k(p_k)} + (C+1)\varepsilon,
	\end{align*}
	which concludes the proof, as the inequality holds for any $\varepsilon$.
\end{proof}

\subsection{Proof of \cref{t:main-main} \ref{item:gradients}}\label{sub:grad-continuity}
The claim \eqref{e:inequality} follows from \cref{prop:gradient lsc}. Thus, we only need to prove \eqref{e:equality}. First, notice that, if $|\nabla d_b|(p_\infty)=1$, then \eqref{e:equality} follows from  \eqref{e:inequality} and the fact that $u_m$ is $1$-Lipschitz. Let now $|\nabla d_b|(p_\infty)<1$.  Suppose by contradiction that there are a subsequence $m_k \tend{k}{+\infty}+\infty$ and constants $\eps>0$ and $\eta_0>0$ such that
$$|\nabla d_b|(p_\infty)+\eps\le |\nabla u_{m_k}|(p)\quad\text{for every}\quad p\in B_{\eta_0}(p_\infty)\quad\text{and every}\quad k\ge 0.$$
We now fix $\eta\le\eta_0$, which will be chosen later in the proof.
	Let $(q_t)_{t\geq0}$ be the curve defined by
	\begin{equation*}
	q_0 = p_\infty \quad \text{and} \quad \frac{\mathrm{d}q_t}{\mathrm{d}t} = \nabla u_{m_k} (q_t).
	\end{equation*}
	Let $T>0$ be such that for any $t\in[0,T]$, $d(q_t,p_\infty)\leq\eta$, and in particular
	$$|\nabla d_b|(p_\infty)+\eps\le |\nabla u_{m_k}|(q_t)\quad\text{for every}\quad t\in[0,T].$$
	We have
	\begin{align*}
	u_{m_k}(q_T)-u_{m_k}(p_\infty) & =\int_0^T \abs{\nabla u_{m_k}(q_t)}^2 \mathrm{d}t \geq \int_0^T \big(|\nabla d_b|(p_\infty)+\eps\big)^2\,\mathrm{d}t =  T  \big(|\nabla d_b|(p_\infty)+\eps\big)^2.
	\end{align*}
	As $u_{m_k}$ is bounded by the diameter of $M$, this estimate implies that there exists a finite biggest time $T>0$ such that for any $t\in[0,T]$, $d(q_t,p_\infty)\leq\eta$. In particular, $d(p_\infty,q_T) = \eta$.
	Let $\gamma$ be a unit speed minimizing geodesic between $p_\infty$ and $q_T$. By \cref{prop:db semiconcave}, there is a constant $C_d>0$ such that $d_b$ is $C_d$-semiconcave in $B_{\eta_0}(p_\infty)$. In particular, by \cref{l:under tangent plane},  we have that
	\begin{align}
	d_b(q_T)-d_b(p_\infty)
	&\leq \abs{\nabla d_b(p_\infty)}d(p_\infty,q_T) +  C_d(d(p_\infty,q_T))^2= \abs{\nabla d_b(p_\infty)}\eta +  C_d \eta^2\ .\label{eq:dvariation}
	\end{align}
	On the other hand,
	\begin{align}
	u_{m_k}(q_T)-u_{m_k}(p_\infty) & =\int_{0}^T \abs{\nabla u_{m_k}(q_t)} \abs{\frac{\mathrm{d}q_t}{\mathrm{d}t}}\mathrm{d}t\geq \int_0^T \big(|\nabla d_b|(p_\infty)+\eps\big)\abs{\frac{\mathrm{d}q_t}{\mathrm{d}t}}   \mathrm{d}t \nonumber\\
	&= \big(|\nabla d_b|(p_\infty)+\eps\big)\int_0^T \abs{\frac{\mathrm{d}q_t}{\mathrm{d}t}}   \mathrm{d}t \geq \big(|\nabla d_b|(p_\infty)+\eps\big)d(q_0,q_T)\nonumber\\
	&=  \big(|\nabla d_b|(p_\infty)+\eps\big)\eta. \label{eq:uvariation}
	\end{align}
	Combining \eqref{eq:dvariation} and \eqref{eq:uvariation}, we get that
	$$ \eps\eta-C_d \eta^2\le \Big(u_{m_k}(q_T)-u_{m_k}(p_\infty)\Big)-\Big(d_b(q_T)-d_b(p_\infty)\Big)\le 2\|u_{m_k}-d_b\|_{L^\infty(M)}.$$
	Now, taking $\eta$ small enough, we get that
	$$ \frac12\eps\eta\le 2\|u_{m_k}-d_b\|_{L^\infty(M)}\quad\text{for every}\quad k\ge 0,$$
	but this is in contradiction with the uniform convergence of $u_m$ to $d_b$.
\qed


\appendix
\section{Appendix about semiconcavity}\label{app:semiconcavity}
In this section we prove that defining local semiconcavity through charts (as in \cite{mantegazza_mennucci_2003}), or through geodesics, is the same (see \cref{prop:definitions semiconcavity_intro}).
We recall the notation $\lambda_{ab} = (1-\lambda)a + \lambda b$, for $a,b \in \R$ and $\lambda \in [0,1]$ and we notice that the $C$-semiconcavity of $u:M\to\R$ (in the sense of \cref{d:semiconcavity}) can be rewritten as
  \begin{equation*}
    \lambda_{u(\gamma(a))u(\gamma(b))} - u(\gamma(\lambda_{ab})) \leq C \lambda (1-\lambda) (b-a)^2,
  \end{equation*}
for every unit speed geodesic $\gamma:[a,b] \to M$ and any $\lambda \in [0,1]$.

In order to prove \cref{prop:definitions semiconcavity_intro}, we need the following lemma, which shows how to estimate the difference between two geodesics linking a pair of given points, for two different metrics.
\begin{lemma}\label{lemma:geodesic estimate}
  Let $g$ be a metric on the unit ball $B_1(0) \subset \rn$. There exists a constant $B>0$ such that for any unit speed geodesic $\gamma : [a,b] \to (B_1(0),g)$ and $\lambda\in[0,1]$, we have
  \begin{equation*}
    \abs{\gamma(\lambda_{ab}) - \lambda_{\gamma(a)\gamma(b)}} \leq B \lambda(1-\lambda)(b-a)^2.
  \end{equation*}
\end{lemma}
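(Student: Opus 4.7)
\medskip

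The plan is to reduce the statement to a one-dimensional ODE estimate: in the Euclidean coordinates on $B_1(0)$, every unit-speed $g$-geodesic $\gamma$ has a uniformly bounded Euclidean acceleration $\ddot\gamma$, and the deviation from the Euclidean affine interpolation of its endpoints is then controlled by a standard Green-function / maximum-principle bound.

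First, since the conclusion is meaningful only when the Christoffel symbols of $g$ are well defined and bounded (and in the intended application of \cref{prop:definitions semiconcavity_intro} one is free to shrink the chart), I would assume $g$ extends to a smooth Riemannian metric on an open neighborhood of $\overline{B_1(0)}$. On the compact set $\overline{B_1(0)}$ the coefficients $g_{ij}$, their first derivatives, the Christoffel symbols $\Gamma^k_{ij}$, and the smallest eigenvalue of $g$ are then bounded from above and away from $0$. The unit-speed condition $g_{ij}(\gamma)\dot\gamma^i\dot\gamma^j=1$ gives a purely Euclidean bound $|\dot\gamma(t)|\le K_1$, and the geodesic equation
\begin{equation*}
\ddot\gamma^k(t) \;=\; -\,\Gamma^k_{ij}(\gamma(t))\,\dot\gamma^i(t)\,\dot\gamma^j(t)
\end{equation*}
then provides a uniform Euclidean bound $|\ddot\gamma(t)|\le K_2$ depending only on $n$ and $g$.

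Next, I would introduce the vector-valued auxiliary function
\begin{equation*}
h(t)\;:=\;\gamma(t)\;-\;\frac{b-t}{b-a}\,\gamma(a)\;-\;\frac{t-a}{b-a}\,\gamma(b),
\end{equation*}
which satisfies $h(a)=h(b)=0$ and $h''(t)=\ddot\gamma(t)$, so componentwise $|h_k''(t)|\le K_2$ for every $k=1,\dots,n$. Applying the one-dimensional maximum principle to the pair of concave functions $\varphi^{\pm}_k(t):=\tfrac{K_2}{2}(t-a)(b-t)\mp h_k(t)$ (each of which vanishes at $t=a,b$ and has $(\varphi^\pm_k)''\le 0$) yields
\begin{equation*}
|h_k(t)|\;\le\;\tfrac{K_2}{2}(t-a)(b-t)\qquad\text{for every }t\in[a,b].
\end{equation*}
Evaluating at $t=\lambda_{ab}$ gives $(t-a)(b-t)=\lambda(1-\lambda)(b-a)^2$, and summing the squared coordinates produces
\begin{equation*}
|h(\lambda_{ab})|\;\le\;\tfrac{K_2\sqrt{n}}{2}\,\lambda(1-\lambda)(b-a)^2,
\end{equation*}
which is the claimed inequality with $B:=K_2\sqrt{n}/2$.

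I do not foresee any serious obstacle. The only point requiring a small amount of care is the uniform boundedness of the Christoffel symbols along all possible geodesic trajectories, which is automatic once one passes to a slightly shrunk chart; no assumption on the length or $g$-diameter of $\gamma$ is needed, because for large $b-a$ the quadratic right-hand side is already a trivial upper bound in view of $|h|\le 2$.
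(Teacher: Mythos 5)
Your argument is correct and follows essentially the same route as the paper's: both proofs reduce the statement to a uniform Euclidean bound $\abs{\ddot\gamma}\le\text{const}$ obtained from the geodesic equation, the unit-speed condition and the boundedness of the Christoffel symbols (your remark about shrinking the chart so that $g$ and $g^{-1}$ are controlled is exactly what is implicitly used, and is harmless in the application to \cref{prop:definitions semiconcavity_intro}). The only difference is the final calculus step: the paper Taylor-expands from the left endpoint and must treat $\lambda\le\sfrac12$ and $\lambda\ge\sfrac12$ separately because its bound degenerates as $\lambda\to1$, whereas your comparison of $h_k$ with the concave barrier $\tfrac{K_2}{2}(t-a)(b-t)$ is symmetric in $a$ and $b$ and yields the factor $\lambda(1-\lambda)(b-a)^2$ directly, which is slightly cleaner.
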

\begin{proof}
  It suffices to prove that the estimate holds for $\lambda \leq \frac{1}{2}$, as the case $\lambda \geq \frac{1}{2}$ can be deduced by considering $\widetilde{\gamma} : t \mapsto \gamma(b-t)$ instead of $\gamma$.
  A unit speed geodesic  $\gamma: [a,b] \to (B_1(0),g)$ satisfies the geodesic equation
  \begin{equation*}
    \ddot{\gamma}^l + \Gamma^l_{ij}\dot{\gamma}^i\dot{\gamma}^j = 0,
  \end{equation*}
  where $\Gamma^l_{ij}$ are the Christoffel symbols of the metric $g$. As $\gamma$ is unit speed, the $(\dot{\gamma}^i)$ are bounded, uniformly in $\gamma$. Therefore, there exists a constant $\alpha>0$ independent of $\gamma$ such that $\abs{\ddot{\gamma}} \leq \alpha$. By integration, we find $$\abs{\gamma(t) - \gamma(a) - \dot{\gamma}(a)(t-a)} \leq \alpha(t-a)^2.$$
  Evaluating this expression at $b$ yields $$\abs{\gamma(b) - \gamma(a) - \dot{\gamma}(a)(b-a)} \leq \alpha (b-a)^2.$$ From these two estimates, we deduce
  \begin{equation*}
    \abs{\gamma(t) - \gamma(a) - \frac{\gamma(b) - \gamma(a)}{b-a}(t-a)} \leq \alpha (t-a)^2 + \alpha (b-a)(t-a).
  \end{equation*}
  Taking $t = (1-\lambda)a +\lambda b$ in this estimate yields
  \begin{align*}
    \abs{\gamma((1-\lambda)a + \lambda b) -\left((1-\lambda)\gamma(a) + \lambda \gamma(b)\right)}
    & \leq \alpha \lambda(1+\lambda)(b-a)^2  =\frac{ \alpha(1+\lambda)}{1-\lambda}\lambda(1-\lambda)(b-a)^2.
  \end{align*}
  Taking $B := \frac{ \alpha(1+\sfrac12)}{1-\sfrac12}$, this proves the desired estimate when $\lambda \leq \sfrac{1}{2}$. This concludes the proof.
\end{proof}

\begin{proof}[Proof of \cref{prop:definitions semiconcavity_intro}]
  Let us assume that $u$ is locally semiconcave. Let $\psi:U\to V$ be a chart from an open set $U$ of $M$ to on open set $V$ of $\rn$, and $y \in V$. Let $f:= u \circ \psi^{-1}$. We want to show that $f$ is semiconcave in a neighborhood of $y$, as a function of $\rn$. We first observe that $f$ is locally semiconcave on the manifold $(V,\psi_{\star}g)$.
  Let $V'\subset V$ be a neighborhood of $y$ that is geodesically convex for the metric $\psi_{\star}g$, and such that there exists a constant $C>0$ such that $f$ is $C$-semiconcave on $(V',\psi_{\star}g)$. Let $d$ denote the distance function on $(V',\psi_{\star}g)$. Up to taking $V'$ smaller, we may assume that the metric $\psi_{\star}g$ is bounded on $V'$, and so there exists a constant $\beta>0$ such that
  \begin{equation*}
    \forall x,y\in V', \quad d(x,y)\leq \beta \abs{x-y}.
  \end{equation*}
  Let $x,y\in V'$ be such that $[x,y] \subset V'$, and $\lambda \in [0,1]$.
  Let $\gamma : [a,b] \to V'$ be a unit speed geodesic of $(V',\psi_{\star}g)$ from $x$ to $y$. By the $C$-semiconcavity of $f$ on $(V',\psi_{\star}g)$, we have
  \begin{align*}
    \lambda_{f(x)f(y)} - f(\lambda_{xy})
    & = \lambda_{f(\gamma(a))f(\gamma(b))} - f(\lambda_{\gamma(a)\gamma(b)}) \\
    & \leq C\lambda(1-\lambda)(b-a)^2 + f(\gamma(\lambda_{ab})) - f(\lambda_{\gamma(a)\gamma(b)}) \\
    & \leq C\lambda(1-\lambda)(b-a)^2 + \mathrm{Lip}(f)\abs{\gamma(\lambda_{ab}) - \lambda_{\gamma(a)\gamma(b)}}.
  \end{align*}
  Applying \cref{lemma:geodesic estimate} above, we get a constant $B>0$ such that
  \begin{align*}
    \lambda_{f(x)f(y)} - f(\lambda_{xy})
    & \leq (C+ \mathrm{Lip}(f)B)\lambda(1-\lambda)(b-a)^2 \\
    & = (C+ \mathrm{Lip}(f)B)\lambda(1-\lambda)(d(x,y))^2 \\
    & \leq (C+ \mathrm{Lip}(f)B)\beta^2 \lambda(1-\lambda)\abs{x-y}^2,
  \end{align*}
  and so $f$ is semiconcave on $V'$, as a function of $\rn$.

  Reciprocally, let us assume that $u\circ \psi^{-1}$ is locally semiconcave as a function of $\rn$ for any chart $\psi$. Then, we can show that $u\circ \psi^{-1}$ is locally semiconcave for the metric $\psi_{\star}g$, for any chart $\psi$, by using the same technique. From there we deduce that $u$ is locally semiconcave. This concludes the proof.
\end{proof}

\section{A counter-example to the equivalence of \eqref{eq:gradient constraint manifold} and \eqref{eq:distance constraint} for small $m$}\label{s:esempio}

\begin{theorem}\label{prop:counterexample}
	There exist a surface of revolution $M$ and a parameter $m>0$ such that $u_m \neq u_m^d$.
\end{theorem}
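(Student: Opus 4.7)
The plan is to construct a rotationally symmetric compact surface $M$ and pick a sufficiently small $m>0$ such that the obstacle-problem minimizer $u_m^d$ of \eqref{eq:distance constraint} satisfies $|\nabla u_m^d|>1$ at some point. Since every admissible function for \eqref{eq:gradient constraint manifold} has $|\nabla u|\le 1$, this immediately forces $u_m\ne u_m^d$. The construction is motivated by the proof of \cref{prop:equivalence constraints}, where the threshold value of $m$ is governed by a lower bound on the Ricci curvature; one should therefore look for a surface carrying a region of strongly negative curvature (a narrow neck) together with a small $m$.

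Take $M$ topologically $S^2$ with a rotationally symmetric metric $g=dr^2+f(r)^2\,d\theta^2$, where $f$ is smooth on $(0,L)$, $f(0)=f(L)=0$, $f'(0)=1$, $f'(L)=-1$, and let $b$ be the north pole. By uniqueness and rotational symmetry, $u_m^d(r,\theta)=U(r)$ depends only on $r$, and the problem reduces to minimizing $\int_0^L ((U')^2-2mU)\,f(r)\,dr$ under the constraint $U\le r$. On the non-contact set $\{U<r\}$ one has $(fU')'=-2mf$, with $U=r$, $U'=1$ at each free boundary and $U'(L)=0$ at the south pole (which lies in the non-contact set by \cref{lemma:regularized distance}). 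Writing $V:=U'$ and integrating on any non-contact interval $(a,b)$ yields
\begin{equation*}
V(r)=\frac{f(a)-2m\int_a^r f(s)\,ds}{f(r)},
\end{equation*}
and the matching conditions $V(b)=1$ (or $V(L)=0$ at the south pole) together with $U(b)=b$ (equivalently $\int_a^b (V-1)\,dr=0$) determine the free-boundary positions.

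Now design $f$ with a pronounced narrow neck: $f\equiv 1$ on a wide cylinder $[1,A]$, a short smooth drop from $1$ to $\epsilon\ll 1$ on $[A,A+1]$, $f\equiv\epsilon$ on a long neck $[A+1,A+1+L_2]$, and a small cap closing at the south pole. On any non-contact piece extending into the neck, the denominator $f(r)=\epsilon$ in the formula for $V$ is very small while the numerator stays of order the integrals of $f$ over the bulk region; this makes $|V|$ of order $1/\epsilon$ times integral factors, which easily exceeds $1$ for suitable choices of $\epsilon$, $A$, $L_2$, and $m$.

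The main technical obstacle is to correctly identify the contact-set structure of the true minimizer for such a narrow-neck profile. The naive single-interval ansatz $\{U=r\}=[0,r_*]$ typically violates $U\le r$ somewhere inside the neck (the free ODE solution overshoots the obstacle), so the actual minimizer carries at least two contact components -- one near $b$ and one inside the neck -- with three free boundaries $r_*^1<r_*^2<r_*^3$ determined by the matching system described above, subject to the obstacle inequality $U\le r$ selecting the correct branch. Once these parameters are solved for on a concrete profile (if necessary by a direct numerical ODE analysis), one verifies that $|V|>1$ persists on at least one of the non-contact pieces. The book-keeping -- balancing the matching equalities and the obstacle inequality while making $|V|>1$ actually occur -- is the essential hurdle of the proof.
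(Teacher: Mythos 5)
Your setup (surface of revolution with a narrow neck, reduction by rotational symmetry to a one--dimensional obstacle problem in the profile variable) is the same as the paper's, and your opening observation -- that it suffices to show $|\nabla u_m^d|>1$ somewhere, since any competitor in \eqref{eq:gradient constraint manifold} is $1$-Lipschitz -- is the correct logical skeleton. But the proof is not complete: the step you yourself flag as ``the essential hurdle'' is exactly the step that is missing, and it cannot be waved away or delegated to ``a direct numerical ODE analysis.'' The difficulty is concrete. Your formula $V(r)=\bigl(f(a)-2m\int_a^r f\bigr)/f(r)$ only yields $V\gg 1$ in the neck if the left free boundary $a$ of the relevant non-contact component sits where $f(a)$ is of order $1$. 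Nothing in your argument rules out that $a$ lies inside the drop region or even inside the neck itself, where $f(a)$ is already comparable to $\epsilon$; in that case the numerator and denominator are of the same order and no violation of the gradient bound follows. Locating the free boundaries of the true minimizer for such a profile (and verifying that the candidate with $V>1$ is consistent with the constraint $U\le r$, which, as you note, the naive single-interval ansatz is not) is precisely the part of the argument that requires a genuinely new idea, and it is absent.

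The paper circumvents this entirely by never analyzing the contact-set structure. It argues by contradiction: assuming $u_m=u_m^d$ gives $|\rho_m'|\le 1$, and then a dichotomy on the value $\rho_m(4)$ (close to $4$ or not) produces in each case an \emph{explicit competitor} whose energy is strictly smaller -- in the first case because crossing the neck costs almost nothing (the weight $r(t)$ there is $10^{-10}$) while $\rho_m$ must have paid a definite Dirichlet energy on $[1,2]$ to reach height $3.5$ under the Lipschitz bound; in the second case because lifting $\rho_m$ by the deficit $4-\rho_m(4)\ge 0.5$ over the very long tail $[5,T-1]$ gains a linear term proportional to $m\cdot T$ that dominates. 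This energy-comparison route buys exactly what your route lacks: it needs no information about where the minimizer touches the obstacle. If you want to salvage the direct ODE approach, you would need an a priori lower bound on $f(a)$ at the left free boundary of the non-contact component containing the descent into the neck, which is not supplied.
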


\begin{proof}[Proof of \cref{prop:counterexample}]
	Let $r_\theta$ denote the rotation of $\R^3$ of angle $\theta\in[0,2\pi)$ around the $z$-axis.\\ Let $T:=10^{10}$ and $r,h:[0,T] \to \R$ be two smooth functions such that:

\noindent\begin{minipage}{0.66\textwidth}	\begin{align}
	\gamma &: t \mapsto (r(t),0,h(t)) \; \text{is a unit speed curve}. \nonumber \\
	M &:= \{ r_{\theta}(\gamma(t)):(t,\theta)\in[0,T]\times[0,2\pi)]\} \; \text{is a smooth surface,}\nonumber \\
	r(0)&=r(T)=0, \nonumber \\
	r &\leq 2, \nonumber \\
	r([1,2]) &\subset [1,2], \nonumber \\
	r([3,4]) &\subset (0,10^{-10}), \nonumber \\
	r([5,T-1]) &\subset [1,2]. \nonumber
	\end{align}
		This information is pictured in Figure \ref{fig:gamma}. We chose $b = (0,0,0)$ as the base point on $M$, and $m=10^{-10}$. Let us assume that $u_m^d = u_m$ and build a better competitor in \eqref{eq:distance constraint} to contradict the minimality of $u_m^d$. We will first reduce \eqref{eq:distance constraint} to a one-dimensional problem. Note that the functional we are minimizing is rotation-invariant. More precisely, for any $\theta\in (0,2\pi)$ and $u\in H^1(M)$, we have
		\begin{equation}
		\int_{M} \abs{\nabla (u\circ r_\theta)}^2 -m(u\circ r_\theta) =
		\int_{M} \abs{\nabla u}^2 -mu.
		\end{equation}
			By the uniqueness of the minimizer $u_m^d$, we deduce that $u_m^d$ is rotation-invariant, \text{i.e.} there exists a function $\rho_m:[0,T]\to\R$ such that for any $\theta \in[0,2\pi)$ and $t\in[0,T]$, $u_m^d(r_{\theta}(\gamma(t))) = \rho_m(t)$.  Thus $u_m^d$ is a minimizer of \eqref{eq:distance constraint} among rotation-invariant functions. \vspace{0.1cm}
\end{minipage}
\begin{minipage}{0.32\textwidth}
		\begin{center}
			\begin{tikzpicture}
			\draw[->] (-1,0) -- (2,0);
			\draw (2,0) node[below right] {$x$};
			\draw [->] (0,-1) -- (0,6);
			\draw (0,6) node[above left] {$z$};
			\draw [domain=-pi/2:pi/2] plot ({1+(1/pi)*cos(\x r)},{1/pi+(1/pi)*sin(\x r)});
			\draw (1,2/pi)--(0.2,2/pi);
			\draw [domain=-pi:-pi/2] plot ({0.2+0.1*cos(\x r)},{2/pi+0.1+0.1*sin(\x r))});
			\draw (0.1,2/pi+0.1)--(0.1,2/pi+0.1+1);
			\draw [domain=pi/2:pi] plot ({0.2+0.1*cos(\x r)},{2/pi+0.1+1+0.1*sin(\x r)});
			\draw (0.2,2/pi+0.1+1+0.1)--(1,2/pi+0.1+1+0.1);
			\draw [domain=-pi/2:0] plot ({1+(1/pi)*cos(\x r)},{2/pi+0.1+1+0.1+1/pi+(1/pi)*sin(\x r)});
			\draw (1+1/pi,2/pi+0.1+1+0.1+1/pi)--(1+1/pi,5);
			\draw [domain=0:pi/2] plot ({1+(1/pi)*cos(\x r)},{5+(1/pi)*sin(\x r)});
			\draw (1,5+1/pi)--(0,5+1/pi);
			\draw (0,0) node[below left] {$\gamma(0)$}; \draw (0,0) node {$\bullet$};
			\draw (1,0) node[below] {$\gamma(1)$}; \draw (1,0) node {$\bullet$};
			\draw (1,0) node[below] {$\gamma(1)$}; \draw (1,0) node {$\bullet$};
			\draw (1,2/pi) node[above right] {$\gamma(2)$}; \draw (1,2/pi) node {$\bullet$};
			\draw (0.1,2/pi+0.1) node[left] {$\gamma(3)$}; \draw (0.1,2/pi+0.1) node {$\bullet$};
			\draw (0.1,2/pi+0.1+1) node[left] {$\gamma(4)$}; \draw (0.1,2/pi+0.1+1) node {$\bullet$};
			\draw (1,2/pi+0.1+1+0.1) node[below right] {$\gamma(5)$}; \draw (1,2/pi+0.1+1+0.1) node {$\bullet$};
			\draw (1,5+1/pi) node[above right] {$\gamma(T-1)$}; \draw (1,5+1/pi) node {$\bullet$};
			\draw (0,5+1/pi) node[left] {$\gamma(T)$}; \draw (0,5+1/pi) node {$\bullet$};
			\draw[->] (0.1,3)--(1.1,3);\draw (0.6,3) node[above] {$r(t)$};
			\draw (1+1/pi,3) node {$\bullet$}; \draw (1+1/pi,3) node[right] {$\gamma(t)$};

			\end{tikzpicture}

				{\sc Figure 3. \it The curve $\gamma$.}
				\vspace{0.2cm}
		\end{center}

	\label{fig:gamma}
\end{minipage}

Let $u:M\to \R$ be any rotation-invariant function, and $\rho:[0,T]\to\R$ be such that for any $\theta \in [0,2\pi)$, $u(r_{\theta}(\gamma(t))) = \rho(t)$.
	We will translate the minimization problem \eqref{eq:distance constraint} on $u$ into a problem on $\rho$.

	First, because $M$ is a surface of revolution, all the geodesics starting from $b = (0,0,0)$ have a constant angle $\theta$. Thus, they are of the form $t\mapsto r_\theta(\gamma(t))$ for some $\theta \in [0,2\pi)$. These are actually unit speed geodesics as $\gamma$ is unit speed. Hence,
	$d_b(r_\theta(\gamma(t)))=t$, and the constraint $u\leq d_b$ in \eqref{eq:distance constraint} is equivalent to $\rho(t)\leq t$.

	Secondly, we translate the $H^1$ constraint. To this end, let us define some coordinates $(t,\theta)$ on $M$ via the map
	$$\phi :(0,T)\times(0,2\pi)\to M\ ,\quad \phi(t,\theta)=r_\theta(\gamma(t)).$$
	We have
	\begin{align}
	\int_{M} \abs{\nabla u}^2
	&= \int_0^{2\pi}\int_0^T (\abs{\nabla u}^2\circ \phi ) J\phi \,\mathrm{d}t\,\mathrm{d}\theta \nonumber \\
	&= \int_0^{2\pi}\int_0^T \abs{\nabla u}^2(r_\theta(\gamma(t)))  r(t) \,\mathrm{d}t\,\mathrm{d}\theta = 2\pi\int_0^T\abs{\nabla u}^2(\gamma(t)) r(t) \,\mathrm{d}t, \label{eq:holder norm in coordinates}
	\end{align}
	because $u$ is rotation-invariant. Moreover, as $u$ is rotation-invariant, its gradient at the point $\gamma(t)$ is parallel to $\gamma'(t)$, and so
	\[\abs{\rho'(t)} = \abs{\nabla u(\gamma(t)) \cdot \gamma'(t)} = \abs{\nabla u (\gamma(t))} \abs{\gamma'(t)} = \abs{\nabla u (\gamma(t))}. \]
	Hence \eqref{eq:holder norm in coordinates} gives
	\begin{equation}
	\nonumber
	\int_{M} \abs{\nabla u}^2 = 2\pi\int_0^T\rho'(t)^2  r(t)\,dt
	\end{equation}
	Thus, the constraint $u\in H^1(M)$ in \eqref{eq:distance constraint} is equivalent to $v\in H^1((0,T),r(t)\mathrm{d}t)$.

	Thirdly, we may compute the functional likewise:
	\begin{equation}
	\nonumber
	\int_{M} \abs{\nabla u}^2 -mu = 2\pi\int_0^T\left(\rho'(t)^2 -m\rho(t) \right) r(t) \mathrm{d}t.
	\end{equation}
	All in all, as $u_m^d$ is a minimizer in \eqref{eq:distance constraint}, $\rho_m$ is a minimizer of :
	\begin{equation}
	\label{eq:one dimensional problem}
	\inf\left\{ \int_0^T\left(\rho'(t)^2 -m\rho(t) \right) r(t) \mathrm{d}t\ :\ \rho\in H^1\big((0,T),r(t)\mathrm{d}t\big),\ \rho(t)\leq t\right\}.
	\end{equation}
	The idea of the rest of the proof is the following. First, we recall the assumption $u_m^d = u_m$, which means that $\abs{\nabla u_m^d}\leq 1$, and so $\abs{\rho_m'}\leq1$. Now, if $\rho_m(4)$ is close to $4$, then $\rho_m'(t)$ is close to $1$ for $t\leq 4$, so a competitor $v$ such that $\rho'(t)$ is small for $t\leq 4$ will contradict the minimality of $\rho_m$ in \eqref{eq:one dimensional problem}.
	If on the contrary $\rho_m(4)$ is significantly smaller than $4$, then for $t\geq4$, $\rho_m(t)$ will be significantly smaller than $t$, so a competitor $\rho$ such that $\rho(t)$ is closer to $t$ for $t\geq4$ will contradict the minimality of $\rho_m$ in \eqref{eq:one dimensional problem}.
	Because we chose $r$ very small on the interval $[3,4]$ (see Figure 3), we can define a competitor $\rho$ independently on $[0,3]$ and $[4,T]$, without paying much for the behavior of $\rho$ on $[3,4]$.

	\emph{Case one: $\rho_m(4)\in[3.5,4]$.} Let us define a competitor $\rho$ for \eqref{eq:one dimensional problem}:
	\begin{equation}\nonumber
\rho:[0,T]\to \R\ ,\quad \rho(t)=
		\begin{cases}
		0 & \text{if} \quad  t\in[0,3] \\
		4(t-3)  & \text{if} \quad t\in[3,4]  \\
		\rho_m(t)+4-\rho_m(4)  & \text{if} \quad t \geq 4
		\end{cases}.
	\end{equation}
	Let us call $\mathcal{F}(\rho)$ the functional appearing in $\eqref{eq:one dimensional problem}$. We have, from the definition of $r$ and $\rho$,
	\begin{align}
	\mathcal{F}(\rho)
	&= \int_3^4\big(16-4m(t-3) \big) r(t) \mathrm{d}t + \int_4^T\left(\rho_m'^2(t) -m\rho_m(t) \right) r(t) \mathrm{d}t - m(4-\rho_m(4))\int_4^Tr(t)\mathrm{d}t \nonumber \\
	&\leq (16-0) \cdot 10^{-10} + \int_{4}^T\left(\rho_m'^2(t) -m\rho_m(t) \right) r(t) \mathrm{d}t - 0 \label{eq:f(rho) estimate}
	\end{align}
	so
	\begin{align}
	\mathcal{F}(\rho)-\mathcal{F}(\rho_m)
	&\leq 16 \cdot 10^{-10}
	- \int_0^4\left(\rho_m'^2(t) -m\rho_m(t) \right) r(t)\,\mathrm{d}t \nonumber\\
	&\leq 16 \cdot 10^{-10}
	- \int_1^2\rho_m'^2(t)  r(t)\,\mathrm{d}t + m \int_0^4 \rho_m(t)r(t)\,\mathrm{d}t\nonumber \\
	&\leq 16 \cdot 10^{-10}
	- \int_1^2\rho_m'^2(t)  r(t)\,\mathrm{d}t + m \int_0^4 2t\,\mathrm{d}t\nonumber \\
	&= 16 \cdot 10^{-10} -\int_1^2\rho_m'^2(t)  r(t)\,\mathrm{d}t + 16 m. \label{eq:competition}
	\end{align}
	We are left to bound from below the integral term in \eqref{eq:competition}. By the Hölder inequality we have
	\begin{equation*}
	\int_1^2 \rho_m' \leq \left(\int_1^2\frac{1}{r}\right)^{\sfrac12}\left(\int_1^2\rho_m'^2 r\right)^{\sfrac12},
	\end{equation*}
	and so
	\begin{align}
	\int_1^2\rho_m'^2 r
	&\geq \frac{(\rho_m(2)-\rho_m(1))^2}{\int_{1}^2\frac{1}{r}}  \geq (\rho_m(2)-\rho_m(1))^2, \label{eq:holder}
	\end{align}
	by the construction of $r$. Now we use the fact $u_m^d = u_m$, which means that $\abs{\nabla u_m^d} \leq 1$, and so $\abs{\rho_m'} \leq1$. With the running assumption $\rho_m(4)\geq 3.5$, this implies $\rho_m(2)\geq 1.5$. As $\rho_m(1)\leq 1$, we get $\rho_m(2)-\rho_m(1)\geq 0.5$. Then, \eqref{eq:holder} and
	\eqref{eq:competition} yield
	\begin{equation}
	\mathcal{F}(\rho)-\mathcal{F}(\rho_m) \leq 16 \cdot 10^{-10} - 0.25 + 16 m.
	\end{equation}
	Recalling that we have chosen $m = 10^{-10}$, it contradicts the minimality of $\rho_m$ in \eqref{eq:one dimensional problem}.

	\emph{Case two: $\rho_m(4)\leq 3.5$.} We use the same competitor $\rho$ as in case one. We even perform similar estimates, the only difference being that we don't estimate the term $- m(4-\rho_m(4))\int_{(4,T)}r(t)\mathrm{d}t$ by $0$ as in \eqref{eq:f(rho) estimate}. Thus \eqref{eq:competition} becomes instead:
	\begin{align}
	\mathcal{F}(\rho)-\mathcal{F}(\rho_m)
	&\leq 16 \cdot 10^{-10} -\int_1^2\rho_m'^2(t)  r(t) \mathrm{d}t + 16 m  - m(4-\rho_m(4))\int_4^Tr(t)\mathrm{d}t.\nonumber\\
	&\leq 16 \cdot 10^{-10} + 16 m - 0.5 m \int_4^Tr(t)\mathrm{d}t \nonumber\\
	& \leq 16 \cdot 10^{-10} + 16 m - 0.5 m \int_5^{T-1}r(t)\mathrm{d}t. \nonumber
	\end{align}
	Recalling that we have chosen $m = 10^{-10}$, $T=10^{10}$ and $r\geq 1$ between $5$ and $T-1$, it contradicts the minimality of $\rho_m$ in \eqref{eq:one dimensional problem}. This concludes the proof.
\end{proof}

\bibliographystyle{plain}
\bibliography{biblio}

\bigskip\noindent
François Générau:\\
Laboratoire Jean Kuntzmann (LJK),
Universit\'e Grenoble Alpes\\
Bâtiment IMAG, 700 avenue centrale,
38041 Grenoble Cedex 9 - FRANCE\\
{\tt francois.generau@univ-grenoble-alpes.fr}

\bigskip\noindent
\'Edouard Oudet:\\
Laboratoire Jean Kuntzmann (LJK),
Universit\'e Grenoble Alpes\\
Bâtiment IMAG, 700 avenue centrale,
38041 Grenoble Cedex 9 - FRANCE\\
{\tt edouard.oudet@univ-grenoble-alpes.fr}

\bigskip\noindent
Bozhidar Velichkov:\\
Dipartimento di Matematica, Universit\`a di Pisa\\
Largo Bruno Pontecorvo, 5, 56127 Pisa - ITALY\\
{\tt bozhidar.velichkov@gmail.com}

\end{document}